\newcommand{\defi}[1]{{\bf\upshape\sffamily #1}}
\DeclareMathOperator{\ShHom}{\mathscr{H}\text{\kern -3pt {\calligra\large om}}\,}
\renewcommand{\a}{\alpha}
\newcommand{\bw}{\bigwedge}
\def\kk{{\mathbf k}}
\renewcommand{\ll}{\lambda}
\newcommand{\onto}{\twoheadrightarrow}
\newcommand{\oo}{\otimes}
\newcommand{\pd}{\partial}
\newcommand{\s}{\sigma}
\newcommand{\bbs}{\mathbb{S}}
\newcommand{\Ext}{\operatorname{Ext}}
\newcommand{\GL}{\operatorname{GL}}
\newcommand{\Hom}{\operatorname{Hom}}
\newcommand{\Sym}{\operatorname{Sym}}
\newcommand{\rank}{\operatorname{rank}}
\newcommand{\bbw}{\mathbb{W}}
\newcommand{\chr}{\operatorname{char}}
\newcommand{\coker}{\operatorname{coker}}
\renewcommand{\det}{\operatorname{det}}
\renewcommand{\ker}{\operatorname{ker}}
\newcommand{\sgn}{\operatorname{sgn}}
\newcommand{\bb}[1]{\mathbb{#1}}
\renewcommand{\rm}[1]{\textrm{#1}}
\newcommand{\mc}[1]{\mathcal{#1}}
\newcommand{\mf}[1]{\mathfrak{#1}}
\newcommand{\op}[1]{\operatorname{#1}}
\newcommand{\ul}[1]{\underline{#1}}
\def\PP{{\mathbf P}}
\def\lra{\longrightarrow}
\newtheorem{theorem}{Theorem}[section]
\newtheorem*{theorem*}{Theorem}
\newtheorem*{problem*}{Problem}
\newtheorem{lemma}[theorem]{Lemma}
\newtheorem{proposition}[theorem]{Proposition}
\newtheorem{corollary}[theorem]{Corollary}
\newtheorem*{corollary*}{Corollary}
\newtheorem*{main-thm*}{Main Theorem}
\newtheorem*{stable-flag*}{Stable Cohomology Theorem on Flag Varieties}
\newtheorem*{stable-dd*}{Stable Cohomology Calculation for $\ll=(-d,d)$}
\newtheorem*{stable-recursion-hooks*}{Stable Cohomology Recursion for $\ll=(-a-b,a,1^b)$}
\newtheorem*{stable-proj*}{Stable Cohomology Theorem on Projective Space}
\newtheorem*{stable-hooks*}{Stable Cohomology Calculation for Hooks}
\newtheorem*{stable-twocolumn-hooks*}{Duality Theorem for 2-Column Partitions and Hooks}
\newtheorem*{stable-truncated-pows*}{Stable Cohomology Calculation for Truncated Powers}
\newtheorem*{stable-vanishing*}{Polynomial Functors with Vanishing Stable Cohomology}
\newtheorem*{vanishing-Koszul*}{Vanishing Theorem for Finite Length Koszul Modules}
\theoremstyle{definition}
\newtheorem*{definition*}{Definition}
\newtheorem{example}[theorem]{Example}
\theoremstyle{remark}
\newtheorem{remark}[theorem]{Remark}
\newtheorem*{remark*}{Remark}
\numberwithin{equation}{section}
\tikzset{
  treenode/.style = {align=center, inner sep=0pt, text centered,solid,thin,
    font=\sffamily},
  arn_n/.style = {treenode, circle, white, font=\sffamily\bfseries, draw=black,
    fill=black, text width=.5em},
  arn_nl/.style = {treenode, circle, white, font=\sffamily\bfseries, draw=black,
    fill=black, text width=1.5em},  
  arn_r/.style = {treenode, circle, red, draw=red, 
    text width=.5em, very thick},
  arn_v/.style = {treenode, circle, black, font=\sffamily\bfseries, draw=black, text width=1.2em},
  arn_x/.style = {treenode, rectangle, draw=black,
    minimum width=.5em, minimum height=0.5em},
  dott/.style={edge from parent/.style={dotted, very thick,circle,draw}},
  emph/.style={edge from parent/.style={dashed, very thick,circle,draw}},
  norm/.style={edge from parent/.style={solid,thin,circle,draw}}
}
\def\labelbox#1{%
  \hbox{%
    \setbox\z@=\hbox{$\m@th\labelstyle{\,#1\,}$}%
    \setbox\tw@=\hbox{$\m@th\labelstyle\,$}%
    \dimen@=\ht\z@ \advance\dimen@ by \wd\tw@ \ht\z@=\dimen@
    \dimen@=\dp\z@ \advance\dimen@ by \wd\tw@ \dp\z@=\dimen@
    \box\z@
  }%
}
\begin{document}

\title{Stable sheaf cohomology on flag varieties}

\author{Claudiu Raicu}
\address{Department of Mathematics, University of Notre Dame, 255 Hurley, Notre Dame, IN 46556\newline
\indent Institute of Mathematics ``Simion Stoilow'' of the Romanian Academy}
\email{craicu@nd.edu}

\author{Keller VandeBogert}
\address{Department of Mathematics, University of Kentucky, 719 Patterson Office Tower, Lexington, KY 40506}
\email{keller.v@uky.edu}

\subjclass[2020]{Primary 14M15, 13D02, 20G05, 20G10}

\date{\today}

\keywords{Cohomology of line bundles, flag varieties, Schur functors, cotangent sheaf, polynomial functors}

\begin{abstract} 
 We prove an effective stabilization result for the sheaf cohomology groups of line bundles on flag varieties $Fl_n$ parametrizing complete flags in $\kk^n$, as well as for the sheaf cohomology groups of polynomial functors applied to the cotangent sheaf $\Omega$ on projective space. In characteristic zero, these are natural consequences of the Borel--Weil--Bott theorem, but in characteristic $p>0$ they are non-trivial. Unlike  many important contexts in modular representation theory, where the prime characteristic $p$ is assumed to be large relative to $n$, in our study we fix $p$ and let $n\lra\infty$. We illustrate the general theory by providing explicit stable cohomology calculations in a number of cases of interest.  Our examples yield cohomology groups where the number of indecomposable summands has super-polynomial growth, and also show that the cohomological degrees where non-vanishing occurs do not form a connected interval. In the case of polynomial functors of $\Omega$, we prove a K\"unneth formula for stable cohomology, and show the invariance of stable cohomology under Frobenius, which combined with the Steinberg tensor product theorem yields calculations of stable cohomology for an interesting class of simple polynomial functors arising in the work of Doty. The results in the special case of symmetric powers of $\Omega$ provide a nice application to commutative algebra, yielding a sharp vanishing result for Koszul modules of finite length in all characteristics.
\end{abstract}

\maketitle

\section{Introduction}\label{sec:intro}

For $\kk$ an algebraically closed field, we consider the \defi{flag variety} $Fl_n$ parametrizing complete flags of subspaces $0\subset V_1\subset\cdots\subset V_{n-1} \subset \kk^n$, where $\dim(V_i)=i$. A fundamental question at the confluence of algebraic geometry and representation theory is to describe the cohomology of line bundles on $Fl_n$. If $\chr(\kk)=0$, the answer to this question is given by the Borel--Weil--Bott theorem, but the situation over fields of positive characteristic remains more mysterious. There have been a number of outstanding results established in this context, such as the Kempf vanishing theorem \cites{kempf-van,kempf,haboush,andersen-frob}, Andersen's proof of the Strong Linkage Principle \cite{And-str-link} (see also \cites{car-lusz,verma,wong,doty1989strong}), or his characterization of the (non-)vanishing of $H^1$ \cite{andersen}, but a general understanding of the structure of cohomology, or at the very least of its (non-)vanishing behavior, remains elusive. The goal of this paper is to study the asymptotic behavior of cohomology (as $n\lra\infty$) and prove effective stabilization results, as well as provide a series of explicit calculations of (stable) cohomology.

We identify $Fl_n = \GL_n/B$, where $B$ denotes the Borel subgroup of upper triangular matrices, and consider $\GL_n$-equivariant line bundles, which are parametrized by weights $\ll$ in $\bb{Z}^n$. We write $\mc{O}_{Fl_n}(\ll)$ for the line bundle corresponding to $\ll\in\bb{Z}^n$, and write $H^j(Fl_n,\mc{O}_{Fl_n}(\ll))$ for its cohomology groups, which are representations of~$\GL_n$. If $\ll\in\bb{Z}^r$ then we write $|\ll|=\ll_1+\cdots+\ll_r$ and
\begin{equation}\label{eq:ll-padded}
\ll^{[n]} = (\ll_1,\cdots,\ll_r,0,\cdots,0)\in\bb{Z}^n\quad \text{for}\quad n\geq r.
\end{equation}

\begin{stable-flag*}
 If $\ll\in\bb{Z}^r$ and $j\geq 0$, then there exists a polynomial functor $\mc{P}^j_{\ll}$ of degree $|\ll|$ such that
 \[H^j\left(Fl_n,\mc{O}_{Fl_n}\left(\ll^{[n]}\right)\right) = \mc{P}^j_{\ll}(\kk^n)\quad\text{ for }n\gg 0.\]
 In particular:
 \begin{itemize}
     \item If $|\ll|<0$ then $\mc{P}^j_{\ll}=0$ and we get vanishing of cohomology.
     \item If $|\ll|=0$ then $\mc{P}^j_{\ll}(\kk^n)$ is a finite dimensional vector space with a trivial $\GL_n$-action. We denote it by~$H^j_{st}(\ll)$, refer to it as the \defi{stable cohomology} of $\mc{O}(\ll)$, and write $h^j_{st}(\ll)$ for its dimension.
 \end{itemize}
\end{stable-flag*}

In Section~\ref{sec:stab-flag-coh} we give a more precise result that includes an effective bound for the stabilization result (see Theorem~\ref{thm:coh-lamn-poly}, Corollary~\ref{cor:stab-coh=poly-functor}). When $j=0$ the theorem is classical: $\mc{P}_{\ll}^0\neq 0$ if and only if $\ll_1\geq\ll_2\geq\cdots\geq \ll_r\geq 0$ (we call such $\ll$ a \defi{partition}), in which case $\mc{P}_{\ll}^0=\bb{S}_{\ll}$ is the \defi{Schur functor} corresponding to $\ll$. If $\ll=(d)$ then
\[ H^0\left(Fl_n,\mc{O}_{Fl_n}\left(d,0,\cdots,0\right)\right)=\Sym^d(\kk^n),\]
and if $\ll=(1,\cdots,1)=(1^r)$ then
\[ H^0\left(Fl_n,\mc{O}_{Fl_n}\left(1,\cdots,1,0,\cdots,0\right)\right)=\bw^r\kk^n.\]
Another easy case of the stabilization result is when $\ll=(-r,1,\cdots,1)=(-r,1^r)\in\bb{Z}^{r+1}$, in which case the only non-vanishing stable cohomology appears in degree $j=r$, and it is a vector space of dimension 
\[h^r_{st}(-r,1^r)=1.\]
In all the examples so far, the results are independent of the characteristic of $\kk$, but this is no longer true for general weights $\ll$. We illustrate this when $\ll=(-d,d)$ with $d\geq 2$. Already in this case it is easy to see that for small values of $n$ the resulting cohomology groups are not polynomial representations: if $n=2$ then $Fl_2\simeq\PP^1$ and $\mc{O}_{Fl_2}(-d,d) = (\bw^2\kk^2)^{\oo d} \oo \mc{O}_{\PP^1}(-2d)$, whose non-zero cohomology is
\[ H^1\left(Fl_2,\mc{O}_{Fl_2}(-d,d)\right) = \left(\bw^2\kk^2\right)^{\oo d-1} \oo (\Sym^{2d-2}\kk^2)^{\vee}.\]
If $\chr(\kk)=0$ and $d\geq 2$, then $h^j_{st}(-d,d)=0$ for all $j$, but the result is significantly more subtle when $\chr(\kk)=p>0$. In order to describe it, we first enumerate the non-negative integers $\equiv 0,1$ (mod $p$) as $0,1,p,p+1,2p,2p+1,\cdots$. For $m$ in this list, we write $|m|_p$ for its position, and call $|m|_p$ the \defi{$p$-index} of $m$:
\begin{equation}\label{eq:def-mp-index} \text{if }m=pa+b,\text{ with }b\equiv 0,1\text{ (mod }p)\text{ and }0\leq b<p,\text{ then }|m|_p = 2a+b.
\end{equation}
If $p=2$ then $|m|_p=m$, but when $p=3$ we get the following values of the $p$-index:
\[
\begin{array}{c|c|c|c|c|c|c|c|c|c|c|c|c|c}
m & 0 & 1 & 2 & 3 & 4 & 5 & 6 & 7 & 8 & 9 & 10 & 11 & \cdots \\ \hline
|m|_p & 0 & 1 &    &  2 & 3  &  & 4 & 5 &   & 6  &  7  &   & \cdots \\
\end{array}
\]
For a tuple $\ul{a} = (a_0,\cdots,a_k)$ of non-negative integers, with $a_i\equiv 0,1\text{ mod }p$, we write
\[|\ul{a}|_p = \sum_{i=0}^k |a_i|_p,\]
and we define
\begin{equation}\label{eq:def-Apd}
A_{p,d} = \left\{ \ul{a}=(a_0,\cdots,a_k) : \sum_{i=0}^k a_i\cdot p^i = d,\ a_i\geq 0,\ a_i\equiv 0,1\text{ (mod $p$)}\right\}.
\end{equation}
The following theorem will be proved in an equivalent form in Section~\ref{subsec:coh-Sym-Omega} (see Theorem~\ref{thm:gen-fun-stab-coh-SymOmega}).

\begin{stable-dd*}
 Suppose that $d\geq 2$ and that $\chr(\kk)=p>0$. We have
 \begin{equation}\label{eq:stab-O(-d,d)} 
 \sum_{j\geq 0}h^j_{st}(-d,d)\cdot t^j = \sum_{\ul{a}\in A_{p,d}} t^{|\ul{a}|_p}.
 \end{equation}
\end{stable-dd*}

\begin{example}\label{ex:stab-dd}
    We illustrate the stable cohomology calculation for $d=6$ (see also Example~\ref{ex:cohSyma-small-vals}). It follows from \eqref{eq:def-Apd} that $A_{p,d}=\emptyset$ if $d\not\equiv 0,1\text{ (mod $p$)}$, so $h^j_{st}(-6,6)=0$ for all $j$ if $p\neq 2,3,5$. When $p=2$ we get
    \[ A_{2,6} = \{(0,1,1),(2,0,1),(0,3),(2,2),(4,1),(6)\},\]
    and
    \[|(0,1,1)|_2 = 2,\ |(2,0,1)|_2 = 3,\ |(0,3)|_2 = 3,\ |(2,2)|_2 = 4,\ |(4,1)|_2 = 5,\ |(6)|_2 = 6.\]
    so the theorem implies
    \[\sum_{j\geq 0}h^j_{st}(-6,6)\cdot t^j = t^2+2t^3+t^4+t^5+t^6.\]
    This example shows that the cohomology group $H^3\left(Fl_n,\mc{O}_{Fl_n}\left(-6,6,0,\cdots,0\right)\right)$ is decomposable for $n\gg 0$ (in fact for $n\geq 7$, see Theorem~\ref{thm:coh-lamn-poly}). By contrast, for $j=0,1$ and all $\ll$, the $\GL_n$-module $H^j\left(Fl_n,\mc{O}_{Fl_n}\left(\ll\right)\right)$ is always indecomposable \cite{andersen}*{Theorem~3.5}, \cite{jantzen}*{Corollaries~II.2.3,~II.5.16}.
    
    When $p=3$ we have
    \[ A_{3,6} = \{(3,1),(6)\},
    \quad\text{and}\quad
    |(3,1)|_3 = 3,\ |(6)|_3 = 4,
    \quad\text{thus}\quad
    \sum_{j\geq 0}h^j_{st}(-6,6)\cdot t^j = t^3+t^4.\]
    When $p=5$ we have
    \[ A_{5,6} = \{(1,1),(6)\},
    \quad\text{and}\quad
    |(1,1)|_5 = 2,\ |(6)|_5 = 3,
    \quad\text{thus}\quad
    \sum_{j\geq 0}h^j_{st}(-6,6)\cdot t^j = t^2+t^3.\]
\end{example}

Using \eqref{eq:stab-O(-d,d)} and the fact that stabilization of cohomology occurs for $n>d$, one can in fact see that the number of decomposable summands in $H^j(Fl_n,\mc{O}_{Fl_n}(\ll))$ cannot be bounded by a polynomial function of $j,n$ and the entries of $\ll$. Indeed, suppose for simplicity that $\chr(\kk)=2$ so that the total cohomology of $\mc{O}_{Fl_n}(-n+1,n-1,0,\cdots)$ is given precisely by the number of binary partitions of $n$, which has super-polynomial growth \cites{mahler,debruijn,froberg}. Since the number of cohomological degrees where cohomology is concentrated is bounded by (a polynomial in) $n$, the conclusion follows. The next example shows that the range of cohomological degrees where non-vanishing of cohomology occurs is usually disconnected (see also the discussion at \cite{hum-MOFW}).

\begin{example}\label{ex:disconnected-coh}
    For any prime $p$ we have
    \[A_{p,p^2} = \{(0,0,1),(0,p),(p^2-p,1),(p^2)\}\]
    and therefore the stable cohomology for $\ll=(-p^2,p^2)$ is given by
    \[\sum_{j\geq 0}h^j_{st}(-p^2,p^2)\cdot t^j = t+t^2+t^{2p-1}+t^{2p}.\]
    When $p\geq 3$, this yields two intervals when non-vanishing of cohomology occurs. The reader can check that for $\ll=(-p^3,p^3)$, $p\geq 3$, the cohomological degrees where $h^j_{st}(\ll)\neq 0$ form $p+2$ distinct intervals.
\end{example}

We can view \eqref{eq:stab-O(-d,d)} as the base case of a general recursion describing stable cohomology for weights of the form $\ll=(-a-b,a,1^b)$. To that end, we define the generating functions
\begin{equation}\label{eq:Hab}
 H_{a,b}(t) = \sum_{j\geq 0} h^j_{st}(-a-b,a,1^b)\cdot t^j,    
\end{equation}
along with
\begin{equation}\label{eq:def-Nbtu}
    \mc{N}_b(t,u) = \frac{u^b}{t^b}\cdot\sum_{a\geq 1} H_{a,b}(t)\cdot u^a.
\end{equation}
If we let
\begin{equation}\label{eq:def-Atu}
    \mc{A}(t,u) = \prod_{i\geq 1}\frac{1+t\cdot u^{p^i}}{1-t^2\cdot u^{p^i}} 
\end{equation}
then we can reformulate \eqref{eq:stab-O(-d,d)} as
\[ \mc{N}_0(t,u) =-1 + (1+t\cdot u) \cdot \mc{A}(t,u).\]
For more general values of the parameter $b$, we have the following simple recursion.

\begin{stable-recursion-hooks*}
If $\chr(\kk)=p>0$ then for $b\geq 0$
\begin{equation}\label{eq:Npb+p-1}
\mc{N}_{pb+p-1}(t,u) = \mc{N}_b(t,u^p),
\end{equation}
and for each $i=0,\cdots,p-2$,
\begin{equation}\label{eq:Npb+i}
\mc{N}_{pb+i}(t,u) = \mc{N}_b(t,u^p) + \left(t\cdot u^{pb+i+1} + t^2\cdot u^{pb+p}\right) \cdot \mc{A}(t,u).
\end{equation}
\end{stable-recursion-hooks*}

We prove the above identities in Section~\ref{subsec:generating}, based on more detailed formulas established in Theorem~\ref{thm:recursion-hooks}. It is then easy to derive a formula for stable cohomology when $b=p^r-1$: if we write $q=p^r$ then
\[ \sum_{a\geq 1} H_{a,q-1}(t)\cdot u^a = \frac{t^{q-1}}{u^{q-1}}\cdot\left[-1+(1+t\cdot u^q)\cdot\mc{A}(t,u^q)\right].\]
It also allows one to characterize the non-vanishing of $H_{a,b}(t)$: if we write $b=-1+c\cdot q$, with $p\nmid c$, then
\begin{equation}\label{eq:Hab-nonzero} H_{a,b}(t) \neq 0 \Longleftrightarrow pq|a-1\quad\text{ or }\quad pq|a+b.
\end{equation}

\begin{remark}\label{rem:strong-linkage}
    If we assume $n\gg 0$ and write $\ll$ instead of $\ll^{[n]}=(-a-b,a,1^b,0^{n-b-2})$, then it follows from the Strong Linkage Principle \cite{And-str-link} that if $\mc{O}(\ll)$ has non-vanishing (stable) cohomology then the zero weight $\ul{0}=(0^n)$ is strongly linked to $\chi=(a-1,0^b,-1^{a-1},0^{n-a-b})$. If $b=0$ and $a=d$, this forces $d\equiv 0,1\ (\op{mod}\ p)$, so \eqref{eq:stab-O(-d,d)} shows that the Strong Linkage Principle characterizes non-vanishing for weights $(-d,d,0,\cdots)$. When $b>0$ however, the condition $p|a+b$ suffices for $\ul{0}$ to be strongly linked to $\chi$ (or even very strongly linked, in the sense of \cite{wong}), but the characterization \eqref{eq:Hab-nonzero} shows that this is not enough to guarantee that the trivial representation $\kk$ appears in the cohomology of $\mc{O}(\ll)$.
\end{remark}

There are natural variations of the stability problem where we replace line bundles by higher rank vector bundles, and consider partial flag varieties instead of $Fl_n$. We illustrate this perspective in the case of projective space $\PP^{n-1}=\PP(\kk^n)$ and vector bundles $\mc{P}(\Omega)$, where $\Omega$ is the cotangent sheaf and $\mc{P}$ is a strict polynomial functor. The foundational work of Friedlander and Suslin \cite{fri-sus} introduced the theory of strict polynomial functors and established its homological framework, thereby laying the groundwork for much of the subsequent research on Ext groups, functor homology, and cohomological invariants for polynomial representations. The present work aims to establish initial connections between the theory of polynomial functors and sheaf cohomology, with the main applications occurring on the cohomology side. In subsequent work \cite{RV} we further develop these ideas by focusing on applications to extension groups between polynomial functors. 

Basic examples of polynomial functors include the symmetric powers $\Sym^d$, the exterior powers $\bw^r$ (in which case $\bw^r\Omega=\Omega^r$ is the sheaf of differential $r$-forms), or the divided powers $D^e$. If $\chr(\kk)=p>0$ then we have the \defi{Frobenius power functor} $F^p$ which is the unique simple subfunctor of $\Sym^p$. More generally, we write $\bb{L}_{\ll}$ for the unique simple subfunctor of $\bb{S}_{\ll}$. Effective versions of the results summarized by the following theorem are proved in Section~\ref{sec:poly-omega} (see Theorems~\ref{thm:coh-omega},~\ref{thm:stab-coh-omega},~\ref{thm:Kunneth-coh-omega},~\ref{thm:invariance-Frob}).

\begin{stable-proj*}
 If $\mc{P}$ is a polynomial functor then $H^j\left(\PP^{n-1},\mc{P}(\Omega)\right)$ has a trivial $\GL_n$-action and is independent of $n$ for $n\gg 0$. We write $H^j_{st}\left(\mc{P}(\Omega)\right)$ for the resulting \defi{stable cohomology group}, and denote by $h^j_{st}\left(\mc{P}(\Omega)\right)$ its dimension. We have:
 \begin{itemize}
     \item {\bf K\"unneth formula:} if $\mc{P}',\mc{P}''$ are polynomial functors, then
     \[H^j_{st}\left(\mc{P}'(\Omega)\oo\mc{P}''(\Omega)\right) = \bigoplus_{j'+j''=j} H^{j'}_{st}\left(\mc{P}'(\Omega)\right) \oo H^{j''}_{st}\left(\mc{P}''(\Omega)\right).\]
     \item {\bf Invariance under Frobenius:} if $\mc{P}$ is a polynomial functor then
     \[H^j_{st}\left((F^p\circ\mc{P})(\Omega)\right) = H^j_{st}\left((\mc{P}\circ F^p)(\Omega)\right) = H^j_{st}\left(\mc{P}(\Omega)\right).\]
 \end{itemize}
\end{stable-proj*}

Standard manipulations involving the projection formula \cite{BCRV}*{Sections~9.3,~9.8} imply that
\begin{equation}\label{eq:Flag-to-Pspace}
H^j_{st}(\bb{S}_{\ll}\Omega) = H^j_{st}(-|\ll|,\ll_1,\cdots,\ll_r)\quad\text{for every partition }\ll=(\ll_1,\cdots,\ll_r),
\end{equation}
where on the left side we have stable cohomology on projective space, while on the right side we have stable cohomology on the flag variety. In particular \eqref{eq:stab-O(-d,d)} describes the stable cohomology of $\Sym^d\Omega$, while the polynomial $H_{a,b}(t)$ in \eqref{eq:Hab} encodes the stable cohomology of $\bb{S}_{(a,1^b)}\Omega$ (we call $(a,1^b)$ a \defi{hook partition}). 

Using the K\"unneth formula and invariance under Frobenius in conjunction with Steinberg's tensor product theorem \cite{jantzen}*{Corollary~II.3.17}, it follows that in order to compute the stable cohomology of $\bb{L}_{\ll}\Omega$, it suffices to consider only the case when $\ll$ is \defi{$p$-restricted} (or \defi{column $p$-regular}):
\[ 0\leq \ll_i-\ll_{i+1} < p \text{ for all }i.\]
This seems to be difficult in general, but we can do it in the case when $\bb{L}_{\ll}=T_p\Sym^d$ is a \defi{truncated symmetric power}, as follows. We recall first that
\begin{equation}\label{eq:def-TpSymd} 
T_p\Sym^d = \coker\left(F^p\oo \Sym^{d-p} \lra \Sym^d\right)
\end{equation}
where we view $F^p$ as a subfunctor of $\Sym^p$ and the map is the usual polynomial multiplication. We also note that the partition $\ll$ we get in this case is given by $\ll=((p-1)^e,r)$ where $d=(p-1)e+r$ and $0\leq r<p-1$ is the remainder of the division of $d$ by $p-1$ \cite{doty-walker}*{Remark~0.2}. We show the following in Section~\ref{subsec:trunc-pows-Doty}.

\begin{stable-truncated-pows*}
 If $d\neq 0,1\ (\op{mod }p)$ then $h^j_{st}(T_p\Sym^d\Omega)=0$ for all $j$. Otherwise, using the notation \eqref{eq:def-mp-index}, we have
 \begin{equation}\label{eq:stable-coh-trunc-pows} h^{|d|_p}_{st}(T_p\Sym^d\Omega)=1\quad\text{ and }\quad h^j_{st}(T_p\Sym^d\Omega)=0\text{ for }j\neq |d|_p.
 \end{equation}
\end{stable-truncated-pows*}

This result is familiar in the case when $p=2$, when we have $T_p\Sym^d=\bw^d$ is an exterior power, hence $T_p\Sym^d\Omega=\Omega^d$ whose (stable) cohomology is one-dimensional, concentrated in degree $d=|d|_2$. We can now shed more light on the formula \eqref{eq:stab-O(-d,d)}, using the filtration on $\Sym^d$ constructed by Doty in \cite{doty1989submodules}: the composition factors are all of the form 
\[\mc{F}^{\ul{a}} = T_p\Sym^{a_0} \oo F^p\left(T_p\Sym^{a_1}\right) \oo F^{p^2}\left(T_p\Sym^{a_2}\right) \oo \cdots,\quad\text{where}\quad d = a_0+a_1p+a_2p^2+\cdots.\]
It follows from \eqref{eq:stable-coh-trunc-pows} (along with the K\"unneth formula and invariance under Frobenius) that $\mc{F}^{\ul{a}}(\Omega)$ has vanishing stable cohomology if $\ul{a}\not\in A_{p,d}$, and has $1$-dimensional cohomology concentrated in degree $|\ul{a}|_p$ if $\ul{a}\in A_{p,d}$. In particular, each of the simple composition factors in the Doty filtration contributes at most one dimension to the cohomology of $\Sym^d\Omega$, and the spectral sequence associated to the Doty filtration of $\Sym^d\Omega$ degenerates. We note that each $\mc{F}^{\ul{a}}$ is a simple polynomial functor, and that in general there are examples of such functors $\bb{L}_{\mu}$ for which $\bb{L}_{\mu}(\Omega)$ has more than one non-vanishing cohomology (see Example~\ref{ex:simpleWithMoreCohom}).

Many of the examples we have seen so far of polynomial functors $\mc{P}$ for which the stable cohomology of~$\mc{P}(\Omega)$ vanishes identically can be explained by the next theorem, which is subsumed by Theorems~\ref{thm:Weyl-Omega} and~\ref{thm:vanishing-p-core}.

\begin{stable-vanishing*}
 The following classes of polynomial functors~$\mc{P}$ satisfy $h^j_{st}(\mc{P}(\Omega))=0$ for all $j$:
 \begin{itemize}
     \item For any $\kk$, the Weyl functors $\mc{P}=\bb{W}_{\ll}$ where $\ll$ is a partition with $\ll_1\geq 2$.
     \item If $\chr(\kk)=p$, the functors $\mc{P}$ admitting a filtration with composition factors $\bb{L}_{\ll}$ where $\ll$ is a partition whose $p$-core $\mu$ satisfies $\mu_1\geq 2$. 
 \end{itemize}
\end{stable-vanishing*}

The main examples of functors satisfying the last condition in the theorem are the Schur functors $\bb{S}_{\ll}$ where $\mu=\op{core}_p(\ll)$ has $\mu_1\geq 2$. In particular, if $\ll=(d)$ and $d=ap+b$ with $0\leq b<p$, then $\mu = (b)$: this implies that $\bb{S}_{\ll}\Omega=\Sym^d\Omega$ has vanishing cohomology if $d\not\equiv 0,1\text{ (mod $p$)}$, which was noted before. Similarly, for the truncated Schur functors $T_p\Sym^d = \bb{L}_{\ll}$, where $\ll = ((p-1)^e,r)$, we have the same core $\mu=(b)$, and we get vanishing as noted when $b\neq 0,1$, that is, $d\not\equiv 0,1\text{ (mod $p$)}$. It follows from \eqref{eq:Hab-nonzero} that if $b\equiv -1\text{ (mod $p$)}$ and $a\not\equiv 1\text{ (mod $p$)}$ then $H_{a,b}(t)=0$: this can be seen alternatively by noting that hypotheses imply that the $p$-core of $\ll=(a,1^b)$ is $\mu=(a',1^{p-1})$ with $2\leq a'\leq p$ and $a'\equiv a\text{ (mod $p$)}$. Nevertheless, we do not know of a general argument that explains the full strength of \eqref{eq:Hab-nonzero}.

There is one more general class of partitions $\ll$ for which we can determine the stable cohomology of $\bb{S}_{\ll}\Omega$, namely the two-column partitions ($\ll_1=2$). We do so by uncovering a surprising relationship with the stable cohomology for hook partitions. Most remarkably, this relationship does not depend on the characteristic of the field $\kk$. Specifically, we prove the following in Theorem~\ref{thm:dualityFor2ColAndHooks}.

\begin{stable-twocolumn-hooks*}
 Suppose that $\ll=(2^d,1^{m-d})$ is the partition whose transpose is $(m,d)$. We have that
 \[ h^i_{st} (\bbs_{\lambda} (\Omega) ) = h^{2m+1 - i}_{st} (\bbs_{(d+1,1^{m-d})} (\Omega)) \quad \text{for all} \ i.
 \]
 Equivalently, on the flag variety we have
 \[ h^i_{st} (-m-d,2^d,1^{m-d}) = h^{2m+1 - i}_{st} (-m-1,d+1,1^{m-d}) \quad \text{for all} \ i.
 \]
\end{stable-twocolumn-hooks*}

Our approach to studying the stable cohomology of $\bb{S}_{\ll}\Omega$ is based on the construction of explicit universal complexes (over~$\bb{Z}$) whose homology, after base change to the field $\kk$, computes $H^j_{st}(\bb{S}_{\ll}\Omega)$ over $\kk$. These complexes are obtained by resolving $\bb{S}_{\ll}\Omega$ by tensor products of exterior powers of $\Omega$ and taking hypercohomology, as explained in Section~\ref{sec:resolutions-ext-pows}. The study of such resolutions was pioneered by Akin and Buchsbaum in the 80s \cites{AB1,AB2} who notably showed their existence for all (skew-)partitions $\ll$, and later on the theory was developed by Buchsbaum and Rota \cites{BR1,BR2,BR3}, and others. The most explicit general construction of such resolutions was obtained by Santana and Yudin \cite{san-yud}, but the said resolutions are
longer than the global dimension of the relevant category of representations \cite{totaro}*{Theorem~2}. We hope that this work will bring renewed interest in the search for an optimal resolution of $\bb{S}_{\ll}$, particularly one that would be suitable for such explicit calculations of stable cohomology as in the case of hooks and $2$-column partitions.

Besides it being such a fundamental problem, our interest in computing cohomology on (partial) flag varieties comes from its utility in describing many homological invariants in commutative algebra. Most famously the syzygies of generic determinantal ideals were obtained using such calculations in characteristic zero by Lascoux \cite{lascoux}, and later these geometric methods have played a major role in the work of Weyman (one can find many applications in his book \cite{weyman2003cohomology}). Many of the results are however restricted to characteristic zero, because they are based on the Borel--Weil--Bott theorem, although in general they do not require the full generality of this theorem. One can often get interesting applications by focusing on the cohomology of special line (or vector) bundles. We conclude below with an application of our results for $\Sym^d\Omega$ (or the line bundles associated to the weights $(-d,d,0,\cdots)$) to a sharp vanishing theorem for Koszul modules over arbitrary characteristics (see Theorem~\ref{thm:vanishing-koszul}, and Section~\ref{sec:vanish-Koszul} for more context).

Let $V=\kk^n$ be a vector space of dimension $n\geq 3$ and let $K\subseteq\bw^2 V$ be a subspace of dimension $m$. We let $S=\Sym(V)$ denote the symmetric algebra on $V$, and consider the $3$-term complex of free graded $S$-modules
\begin{equation}\label{eqn:W}
\xymatrixcolsep{5pc}
\xymatrix{
K \oo S \ar[r]^{\delta_2|_{K \oo S}} & V\oo S(1) \ar[r]^{\delta_1} & S(2)
}
\end{equation}
where $S(j)$ denotes the free graded rank one module, with generator in degree $-j$: the grading is such that $S(j)_d = S_{d+j}$. In particular, the leftmost module is generated in degree $0$. Moreover, $\delta_1:V\oo S\lra S$ is the natural multiplication map, and 
\[\delta_2:\bw^2 V \oo S \lra V \oo S,\quad (v\wedge v') \oo f \overset{\delta_2}{\lra} v\oo (v' f) - v' \oo (v f)\mbox{ for }v,v'\in V, f\in S,\] 
is the second differential of the Koszul complex on $V$. The \defi{Koszul module} $W(V,K)$ is defined to be the middle homology of (\ref{eqn:W}), and it is a graded module generated in degree $0$. In Section~\ref{sec:vanish-Koszul} we prove:

\begin{vanishing-Koszul*}
 We have the equivalence
 \[W_j(V,K)=0\quad\text{for }j\gg 0 \quad\Longleftrightarrow\quad W_j(V,K)=0\quad\text{for }j\geq 2n-7.\]
 Moreover, if $m=2n-3$, $p=\chr(\kk)>0$, and the vanishing above holds, then
 \[ W_{2n-8}(V,K) \neq 0 \quad\Longleftrightarrow\quad W_{2n-8}(V,K)\simeq\kk \quad \quad\Longleftrightarrow\quad n = 3 + p^k\text{ for some }k. \]
\end{vanishing-Koszul*}

If $\op{char}(\kk)=0$ or $p\geq n-2$, it was shown in \cite{AFPRW} that the vanishing of $W_j(V,K)$ holds for $j\geq n-3$ and that this is a sharp estimate. The result constituted the main ingredient of a new proof of Green's conjecture for generic curves, and its natural extension to positive characteristic (see also \cite{rai-sam}). It was known by work of Schreyer \cite{schreyer} that Green's conjecture for generic curves can fail in small characteristic, and our theorem gives an algebraic counterpart explaining the failure of vanishing for all finite length Koszul modules in small characteristics.

\bigskip

\noindent{\bf Organization.} In Section~\ref{sec:prelim} we recall basic aspects of the theory of polynomial functors, along with the terminology on partitions, cores, and the consequences of Strong Linkage that will be relevant to our work. In Section~\ref{sec:stab-flag-coh} we discuss the stabilization theorem for line bundles on flag varieties, while in Section~\ref{sec:poly-omega} we discuss the main stabilization results for polynomial functors of the cotangent sheaf on projective space. Section~\ref{sec:resolutions-ext-pows} discusses the Akin--Buchsbaum resolutions by tensor products of exterior powers, and constructs the arithmetic complexes encoding the stable cohomology for several classes of polynomial functors. Section~\ref{sec:arithmetic} contains the main explicit calculations of stable cohomology, and explains the duality between hooks and two-column partitions. Finally, Section~\ref{sec:vanish-Koszul} describes the vanishing theorem for finite length Koszul modules.

\section{Preliminaries}\label{sec:prelim}

Throughout the paper, $\kk$ denotes an algebraically closed field of characteristic $\chr(\kk)=p$ (usually $p>0$). 

\subsection{Quick review of polynomial functors}\label{subsec:poly-fun}

Let $\mc{V}$ denote the category of finite dimensional $\kk$-vector spaces. A functor $\mc{P}:\mc{V}\lra\mc{V}$ is a \defi{polynomial functor} if the natural map $\Hom_{\kk}(V,W) \lra \Hom_{\kk}(\mc{P}(V),\mc{P}(W))$ is polynomial for $V,W\in\mc{V}$. We refer the reader to \cite{fri-sus} where a theory of \defi{strict polynomial functors} is developed over an arbitrary field $\kk$. As remarked after \cite{fri-sus}*{Definition 2.1}, the definition we use here is sufficient in our context where $\kk$ is algebraically closed. The polynomial functor $\mc{P}$ is \defi{homogeneous of degree~$d$} if $\mc{P}(c\cdot\op{id}_V) = c^d\cdot\op{id}_V$ for $c\in\kk$ and $V\in\mc{V}$, where $\op{id}_V$ denotes the identity map on $V$. 

We will be interested exclusively in homogeneous polynomial functors of degree $d$, and we write $\mf{Pol}_d$ for the corresponding category. There is an equivalence of categories between $\mf{Pol}_d$ and its opposite category $\mf{Pol}_d^{op}$, sending $\mc{P}$ to the functor $\mc{P}^{\#}$ defined by $\mc{P}^{\#}(V)=\left(\mc{P}(V^{\vee})\right)^{\vee}$ \cite{fri-sus}*{Proposition~2.6}. The main examples of objects in $\mf{Pol}_d$ are the \defi{Schur functors} $\bb{S}_{\ll}$ for $\ll$ a partition of $d$, and the \defi{Weyl functors} $\bb{W}_{\ll}=\bb{S}_{\ll}^{\#}$. When $\ll=(d)$ we have $\bb{S}_{\ll}=\Sym^d$ is the $d$-th symmetric power functor, while $\bb{W}_{\ll}=\op{D}^d$ is the $d$-th divided power functor. When $\ll=(1^d)$, both $\bb{S}_{\ll}$ and $\bb{W}_{\ll}$ agree with the exterior power functor $\bw^d$. There is an equivalence of $\mf{Pol}_d$ with the category of polynomial representations of degree $d$ of the algebraic group $\GL_n(\kk)$ whenever $n\geq d$, which in turn is equivalent to the category of finitely generated modules over the Schur algebra $S(n,d)$ (see \cite{fri-sus}*{Section~3}, and also \cite{green}, \cite{jantzen} for more about Schur algebras and representations of algebraic groups). We write $\bb{L}_{\ll}$ for the unique simple subfunctor of $\bb{S}_{\ll}$, which is also the unique simple quotient of~$\bb{W}_{\ll}$, and satisfies $\bb{L}_{\ll}=\bb{L}_{\ll}^{\#}$. When $\ll=(q)$ where $q=p^k$ is a power of $p=\chr(\kk)$, we have that $\bb{L}_{\ll}=F^q$ is the \defi{Frobenius $q$-power functor}: it is the subfunctor of $\Sym^q$ given for $V\in\mc{V}$ by
\[F^qV = \{v^q : v\in V\} \subset \Sym^q V.\]

\subsection{Young diagrams, cores, and strong linkage}

Given a partition $\ll=(\ll_1\geq\ll_2\geq\cdots)$, its Young diagram consists of left-justified rows of boxes, with $\ll_i$ boxes in row $i$. The \defi{transpose partition $\ll'$} is the partition corresponding to the transposed Young diagram of $\ll$. We usually identify $\ll$ with its Young diagram, which explains the following terminology. A partition $\lambda$ of the form $\lambda = (a,1^b)$ for integers $a,b \geq 0$ is called a \defi{hook partition}. A partition $\lambda$ with each part $\ll_i\leq 2$ is said to be a \defi{two-column partition}.
\[
\ytableausetup{centertableaux,smalltableaux}
\text{hook:}\quad\ydiagram{4,1,1},\qquad
\text{two-column:}\quad\ydiagram{2,2,1,1,1},\qquad
\text{transposed partitions:}\quad\ll=\ydiagram{4,3,1}\text{ and }\ll'=\ydiagram{3,2,2,1}.
\]
We write $\mu\subseteq\ll$ if $\mu_i\leq\ll_i$ for all $i$ (equivalently, the diagram of $\mu$ is contained in that of $\ll$; in Section~\ref{subsec:ribbons} we will consider generalizations of partitions and their Young diagrams given by certain skew-shapes $\ll/\mu$ for $\mu\subset\ll$). If $|\ll|=|\mu|$ then we say that $\mu\leq\ll$ in the \defi{dominance order} if $\mu_1+\cdots+\mu_i\leq \ll_1+\cdots+\ll_i$ for all $i$. For the pair of transposed partitions above, we have $\ll'\leq\ll$.

We index the boxes of a partition $\ll$ by pairs $(i,j)$, $1\leq j\leq \ll_i$, and call the \defi{rim} of $\ll$ the collection of boxes $(i,j)\in\ll$ with $(i+1,j+1)\not\in\ll$: we use $\bullet$ to indicate the rim boxes in the following diagram
\begin{equation}\label{eq:5532-rim}
\ytableausetup{notabloids}
\begin{ytableau}
\ & \ & \ & \ & \bullet \\
\  & \ & \bullet & \bullet & \bullet \\
\ & \bullet & \bullet \\
\bullet & \bullet
\end{ytableau}
\end{equation}
Given $p>0$, a \defi{rim $p$-hook} of $\ll$ is a collection of $p$ consecutive boxes in the rim of $\ll$, whose removal results in another partition $\mu$. The \defi{$p$-core of $\ll$} is the partition $\mu=\op{core}_p(\ll)$ obtained from $\ll$ by successively removing rim $p$-hooks while they exist. For $\ll=(5,5,3,2)$ as in \eqref{eq:5532-rim} and $p=7$, we have $\op{core}_p(\ll)=(1)$, which can be obtained as follows:
\[
\begin{ytableau}
\ & \ & \ & \ & \bullet \\
\  & \ & \bullet & \bullet & \bullet \\
\ & \bullet & \bullet \\
\ & \bullet
\end{ytableau}
\qquad\lra\qquad
\begin{ytableau}
\ & \bullet & \bullet & \bullet  \\
\bullet  & \bullet \\
\bullet \\
\bullet 
\end{ytableau}
\qquad\lra\qquad
\ydiagram{1}
\]
It is a non-trivial fact that $\op{core}_p(\ll)$ is independent of the order in which rim hooks are removed (see for instance \cite{james1978some}): the reader can check that for $\ll=(5,5,3,2)$ there is one additional rim $7$-hook removal sequence resulting in the partition $(1)$.

It is a consequence of the Strong Linkage Principle that if $\bb{L}_{\mu}$ is a simple composition factor for $\bb{S}_{\ll}/\bb{L}_{\ll}$ (or equivalently for $\ker(\bb{W}_{\ll}\onto\bb{L}_{\ll})$) then $\mu < \ll$ in the dominance order, and $\op{core}_p(\mu)=\op{core}_p(\ll)$. An instance of this appears in Example~\ref{ex:simpleWithMoreCohom} where $p=2$, $\ll=(2,2,1,1)$ and the composition factors of $\bb{S}_{\ll}$ are $\bb{L}_{\ll}$ (with multiplicity one), $\bb{L}_{(2,1,1,1,1)}$ (with multiplicity one), and $\bb{L}_{(1,1,1,1,1,1)}=\bw^6$ (with multiplicity two). In particular, if $\ll$ is minimal in the dominance order among all partitions of $d$ with a given $p$-core (such as $\ll=(1^d)$), then $\bb{S}_{\ll}=\bb{L}_{\ll}=\bb{W}_{\ll}$.

In Section~\ref{sec:stab-flag-coh} we will need to consider not necessarily polynomial representations of $\GL_n$. The simple ones are classified by \defi{dominant weights} $\mu\in\bb{Z}^n_{dom}$, which are elements $\mu\in\bb{Z}^n$ with $\mu_1\geq\cdots\geq\mu_n$. We write $L(\mu)$ for the simple of highest weight $\mu$, and note that when $\mu$ is a partition ($\mu_n\geq 0$) then $L(\mu)=\bb{L}_{\mu}(\kk^n)$. We write $\rho=(n-1,n-2,\cdots,0)\in\bb{Z}^n$ and recall the usual action of the symmetric group $\mf{S}_n$ on $\bb{Z}^n$ by coordinate permutations, as well as the \defi{$\bullet$-action} defined by
\[\sigma\bullet\ll = \sigma(\ll+\rho)-\rho.\]

\section{Polynomial stabilization for cohomology of line bundles}\label{sec:stab-flag-coh}

The goal of this section is to provide an effective stabilization result for the cohomology of line bundles $\mc{O}_{Fl_n}(\ll_1,\cdots,\ll_r,0,\cdots,0)$, as $n\lra\infty$. Recalling the notation \eqref{eq:ll-padded}, we start by proving the following.

\begin{theorem}\label{thm:coh-lamn-poly}
 Suppose that $\ll\in\bb{Z}^r$ and
 \[ n\geq i - \ll_i \text{ for }i=1,\cdots,r.\]
 We have that $H^j\left(Fl_n,\mc{O}_{Fl_n}(\ll^{[n]})\right)$ is a polynomial $\GL_n$-representation for all $j\geq 0$.
\end{theorem}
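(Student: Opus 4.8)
The plan is to reduce the statement to a statement about weights that are \emph{dominant after a suitable Weyl group twist}, and then invoke the known structure of cohomology on $Fl_n$ via the Borel--Weil--Bott/Kempf-vanishing circle of ideas together with the $\bullet$-action. Concretely, recall that for a weight $\mu\in\bb{Z}^n$, the cohomology $H^j(Fl_n,\mc{O}_{Fl_n}(\mu))$ is nonzero only if $\mu$ is \emph{regular} in the sense that $\sigma\bullet\mu$ is dominant for some (necessarily unique) $\sigma\in\mf{S}_n$, and in that case, if $\ell(\sigma)\neq j$ the cohomology may still be complicated in characteristic $p$, but the \emph{highest weights} of all composition factors of $H^j$ are linked to $\sigma\bullet\mu$, and in particular all lie in $\sigma\bullet\mu + \bb{Z}\Phi$. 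So the first step is: show that under the hypothesis $n\geq i-\ll_i$ for $i=1,\dots,r$, every dominant weight $\nu$ that is linked (via the $\bullet$-action and the root lattice, i.e.\ by Strong Linkage) to $\ll^{[n]}$ is itself a partition, i.e.\ satisfies $\nu_n\geq 0$. Since a $\GL_n$-representation is polynomial precisely when all highest weights of its composition factors are partitions, this will finish the proof.

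The key computation is thus a combinatorial one about the $\bullet$-action. First I would observe $|\ll^{[n]}| = |\ll|$ and that the $\bullet$-action preserves $|\mu|$ only up to... actually it preserves $|\mu + \rho| = |\mu| + \binom{n}{2}$, so it preserves $|\mu|$; and Strong Linkage moves a weight within its coset mod the root lattice, which for $\GL_n$ also preserves $|\mu|$. So all relevant dominant weights $\nu$ satisfy $|\nu| = |\ll|$. Next, for $\nu$ dominant ($\nu_1\geq\cdots\geq\nu_n$), being linked to $\ll^{[n]}$ forces, via Strong Linkage in the form recalled in Section~\ref{sec:prelim} (the condition that $\nu \le$ something in a twisted dominance order, plus $\op{core}_p$-type constraints), an inequality relating the partial sums $\nu_1+\cdots+\nu_i$ to those of $\sigma\bullet\ll^{[n]}$ for the dominant representative. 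The cleanest path is to show directly: if $\nu$ is dominant and $\nu + \rho = \sigma(\ll^{[n]} + \rho)$ for some $\sigma\in\mf{S}_n$ (the regularity case), then $\nu_n \geq 0$, and that this already follows from $n \geq i - \ll_i$. Indeed $(\ll^{[n]}+\rho)$ has entries $\ll_1 + n-1, \dots, \ll_r + n-r, n-r-1, n-r-2,\dots, 0$; the hypothesis $n\geq i-\ll_i$ says exactly that $\ll_i + n - i \geq 0$ for $i\leq r$, so \emph{all} entries of $\ll^{[n]}+\rho$ are $\geq 0$; hence the sorted-decreasing rearrangement $\nu+\rho$ has all entries $\geq 0$, and in particular $\nu_n + 0 = (\nu+\rho)_n \geq 0$, i.e.\ $\nu_n\geq 0$. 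For the non-regular case $H^j=0$ and there is nothing to prove.

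The main obstacle — and the reason the hypothesis is stated with the linkage-flavored bound rather than something cruder — is handling the composition factors of $H^j$ that do \emph{not} have highest weight equal to the Bott representative $\sigma\bullet\ll^{[n]}$: in characteristic $p$, $H^j(Fl_n, \mc{O}(\mu))$ need not be simple or even indecomposable, so I must control \emph{all} composition factors, not just the "expected" one. This is exactly where Strong Linkage (Section~\ref{sec:prelim}) enters: every composition factor $L(\nu)$ of $H^j$ has $\nu$ strongly linked to $\sigma\bullet\ll^{[n]}$, hence $\nu$ is dominant with $|\nu| = |\ll|$ and $\nu \leq \sigma\bullet\ll^{[n]}$ in the appropriate order; combined with $\nu_n \geq $ (the linkage restricts $\nu$ to a bounded region), one checks $\nu_n \geq 0$. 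I would isolate the needed fact — "if $\mu + \rho$ has all nonnegative entries then every highest weight strongly linked to the dominant representative of $\mu$ is a partition" — as a short lemma, proving it by noting that strong linkage keeps $\nu$ between a partition and $\sigma\bullet\mu$ in dominance order and that $|\nu|$ is fixed, forcing $\nu_n\geq 0$ by the same partial-sum bookkeeping. Everything else is the routine translation between $\GL_n$-weights, the $\rho$-shift, and the notion of polynomial representation.
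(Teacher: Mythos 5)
Your proposal follows essentially the same route as the paper: reduce polynomiality to showing that every composition factor $L(\nu)$ of $H^j$ has $\nu_n\geq 0$, invoke the Strong Linkage Principle to compare $\nu$ with the sorted representative $\chi$ (where $\chi+\rho$ is the weakly decreasing rearrangement of $\ll^{[n]}+\rho$), and observe that the hypothesis $n\geq i-\ll_i$ says exactly that all entries of $\ll^{[n]}+\rho$ are nonnegative, so $\chi_n\geq 0$ and the dominance-order/partial-sum comparison forced by strong linkage gives $\nu_n\geq\chi_n\geq 0$. This is precisely the paper's argument, phrased there as a contradiction.

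One caveat: your dismissal of the ``non-regular case'' via the claim that $H^j(Fl_n,\mc{O}(\mu))=0$ whenever $\mu+\rho$ is singular is a Borel--Weil--Bott statement; in characteristic $p$ this is only immediate when $\langle\mu+\rho,\alpha^\vee\rangle=0$ for a \emph{simple} root, and you should not assume it in general. Fortunately you do not need it: the Strong Linkage Principle, applied as the paper does with $\chi\in\bb{Z}^n_{dom}-\rho$ the sorted $\rho$-shifted representative (which exists whether or not $\ll^{[n]}+\rho$ is regular), already controls all composition factors in every case, so simply drop the regular/non-regular dichotomy and run your linkage lemma uniformly.
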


\begin{proof} Suppose that $L(\mu)$ is a simple composition factor of $H^j\left(Fl_n,\mc{O}_{Fl_n}(\ll^{[n]})\right)$, where $\mu\in\bb{Z}^n_{dom}$. By \cite{fri-sus}*{Corollary~3.5.1}, it suffices to prove that $\mu_n\geq 0$. Suppose by contradiction that $\mu_n<0$ and let $\chi\in\bb{Z}^n_{dom}-\rho$ and $w\in\mf{S}_n$ such that $\ll^{[n]} = w\bullet\chi$, so that the sequences
\[(\ll_1-1,\cdots,\ll_r-r,-r-1,\cdots,-n)\quad\text{and}\quad(\chi_1-1,\cdots,\chi_n-n)\]
agree up to a permutation of the entries. By the Strong Linkage Principle \cite{And-str-link}*{Theorem~1}, we have that $\mu$ is strongly linked to $\chi$, which implies that $\chi_n\leq\mu_n\leq -1$. It follows that $\chi_n-n\leq -n-1$ and therefore we must have 
\[\ll_i-i = \chi_n-n \leq -n-1\quad\text{ for some }i=1,\cdots,r.\]
This forces $n=i-\ll_i+\chi_n<i-\ll_i$, which contradicts our hypothesis.
\end{proof}

Given a $\GL_n$-representation $M$, we write $M_{\ul{a}}$ for the weight space of $M$ corresponding to the weight $\ul{a}\in\bb{Z}^n$.

\begin{corollary}\label{cor:dega-stab-coh=0}
 With the notation in Theorem~\ref{thm:coh-lamn-poly}, if $\ul{a}\in\bb{Z}^n$ satisfies $a_i<0$ for some $i$ then 
 \[H^j\left(Fl_n,\mc{O}_{Fl_n}(\ll^{[n]})\right)_{\ul{a}}=0.\]
\end{corollary}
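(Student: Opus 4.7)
The plan is to simply invoke Theorem~\ref{thm:coh-lamn-poly} together with the standard fact that every weight appearing in a polynomial representation of $\GL_n$ lies in $\bb{Z}_{\geq 0}^n$. First I would recall that the hypothesis on $n$ and $\ll$ in Theorem~\ref{thm:coh-lamn-poly} is exactly the hypothesis of the corollary, so the cohomology module $M = H^j\bigl(Fl_n,\mc{O}_{Fl_n}(\ll^{[n]})\bigr)$ is a polynomial $\GL_n$-representation.

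Next I would observe that for any polynomial $\GL_n$-representation $M$, and any weight $\ul{a}\in\bb{Z}^n$ with some $a_i<0$, the weight space $M_{\ul{a}}$ must vanish. One way to see this is to restrict to the diagonal torus and use that the action of the scalar matrix $c\cdot\op{id}\in\GL_n$ on the weight space $M_{\ul{a}}$ is scalar multiplication by $c^{a_1+\cdots+a_n}$, while the map $c\mapsto c|_{M_{\ul{a}}}$ extends polynomially in $c$ (including at $c=0$) by the polynomial functor hypothesis. Since coordinate weights of a torus are obtained by varying each $c_i\in\kk^{\times}$ independently, the $i$-th torus coordinate acts on $M_{\ul{a}}$ by $c_i^{a_i}$ and this must extend to a polynomial in $c_i$; but $c_i^{a_i}$ is not polynomial when $a_i<0$ unless $M_{\ul{a}}=0$. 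Equivalently, one invokes the equivalence between $\mf{Pol}_d$ and polynomial representations of $\GL_n$ from Section~\ref{subsec:poly-fun} (the reference \cite{fri-sus}*{Section~3}), for which the constraint $\ul{a}\in\bb{Z}_{\geq 0}^n$ on weights is built into the definition.

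There is no real obstacle here: the corollary is a direct structural consequence of Theorem~\ref{thm:coh-lamn-poly}, reducing to the tautology that polynomial representations have non-negative weights. The proof should be just one or two sentences, citing the theorem and the characterization of weights of polynomial representations.
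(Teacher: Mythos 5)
Your proposal is correct and matches the paper's argument: the paper likewise deduces the corollary directly from Theorem~\ref{thm:coh-lamn-poly} together with the fact that weights of polynomial $\GL_n$-representations are non-negative (citing \cite{fri-sus}*{Proposition~3.5}), which is exactly the fact you justify via the torus action. No gaps.
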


\begin{proof}
 This follows from Theorem~\ref{thm:coh-lamn-poly} and \cite{fri-sus}*{Proposition~3.5}.
\end{proof}

\begin{theorem}\label{thm:iso-coh-dega}
Suppose that $\ll\in\bb{Z}^r$ and $n\geq i-\ll_i$ for $1\leq i\leq r$. Let $\mc{Q}$ denote the tautological rank $n$ quotient sheaf on $Fl_{n+1}$, and write $\iota:Fl_n\lra Fl_{n+1}$ for the inclusion coming from the identification of $Fl_n$ with the vanishing locus $Z(x_{n+1})$ of the section $x_{n+1}\in H^0(Fl_{n+1},\mc{Q})$ corresponding to the last coordinate function on $\kk^{n+1}$. The natural map
     \[ H^j\left(Fl_{n+1},\mc{O}_{Fl_{n+1}}(\ll^{[n+1]})\right)_{(\ul{a},0)} \lra H^j\left(Fl_{n},\mc{O}_{Fl_{n}}(\ll^{[n]})\right)_{\ul{a}}\]
     is an isomorphism for all $j\geq 0$ and all $\ul{a}\in\bb{Z}^n$.
\end{theorem}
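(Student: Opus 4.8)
The plan is to realize the inclusion $\iota:Fl_n\hookrightarrow Fl_{n+1}$ as the zero locus of a regular section and resolve the structure sheaf $\iota_*\mathcal{O}_{Fl_n}$ by a Koszul-type complex, then twist by the line bundle and chase the resulting long exact (or spectral) sequence in cohomology. Concretely, the section $x_{n+1}\in H^0(Fl_{n+1},\mathcal{Q})$ cuts out $Fl_n=Z(x_{n+1})$ as a divisor inside the $\mathbb{P}^n$-bundle structure, or more precisely: under $Fl_{n+1}=Fl(\mathcal{Q})$ thought of as flags inside the tautological quotient, vanishing of the last coordinate realizes $Fl_n$ as the subvariety where the flag lives in the hyperplane $x_{n+1}=0$. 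The first step is to identify the conormal/ideal sheaf: I expect a short exact sequence of the form $0\to \mathcal{O}_{Fl_{n+1}}(\ll[n+1])\otimes\mathcal{L}\to \mathcal{O}_{Fl_{n+1}}(\ll[n+1])\to \iota_*\mathcal{O}_{Fl_n}(\ll^{[n]})\to 0$ for an appropriate line bundle $\mathcal{L}$ coming from the section $x_{n+1}$ (a weight-$(0,\dots,0,1)$ type bundle, or a subquotient thereof). The key arithmetic input should be that the restriction map $H^0(Fl_{n+1},\mathcal{Q})\to H^0(Fl_{n+1},\mathcal{Q}/(\text{sub}))$ makes $x_{n+1}$ a nonzerodivisor so that the Koszul complex on $x_{n+1}$ is exact.

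Next I would take the long exact sequence in cohomology, pass to the weight space $(\ul a,0)$ on the source, and match it against the weight space $\ul a$ on the target. The point is that tensoring by $\mathcal{L}$ (or whatever the twist is) shifts weights in the last coordinate, so that the "error terms" $H^j\big(Fl_{n+1},\mathcal{O}_{Fl_{n+1}}(\ll[n+1])\otimes\mathcal{L}\big)$ contribute only to weight spaces $(\ul b, c)$ with $c\neq 0$ — or, after using Corollary~\ref{cor:dega-stab-coh=0}, with $c<0$, hence vanishing. This is exactly where the hypothesis $n\geq i-\ll_i$ is used: it guarantees (via Theorem~\ref{thm:coh-lamn-poly}, applied on both $Fl_n$ and $Fl_{n+1}$) that all relevant cohomology is polynomial, so weight spaces indexed by tuples with a negative entry vanish, killing the connecting maps and forcing the restriction map to be an isomorphism on the $(\ul a,0)$-weight space. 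One must check that the twisting bundle $\mathcal{L}$ is such that its tensor with $\mathcal{O}_{Fl_{n+1}}(\ll[n+1])$ corresponds to a weight $\ll'$ still satisfying the hypothesis of Theorem~\ref{thm:coh-lamn-poly} after the coordinate-shift bookkeeping (this is the routine part), so Corollary~\ref{cor:dega-stab-coh=0} applies to it as well.

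The main obstacle I anticipate is getting the geometry of the inclusion $\iota$ and the precise form of the Koszul (or divisor) resolution correct: $Fl_{n+1}$ is not literally a projective bundle over $Fl_n$, so one needs the right description of $Fl_n\subset Fl_{n+1}$ as the locus where the complete flag is contained in a fixed hyperplane of $\kk^{n+1}$, together with a clean identification of the normal bundle and of how $\mathcal{O}_{Fl_{n+1}}(\ll[n+1])$ restricts to $\mathcal{O}_{Fl_n}(\ll^{[n]})$ (the padding by a zero is exactly compatible with this). A clean way to organize this: work on the partial flag variety / projective bundle $\mathbb{P}(\mathcal{Q}^\vee)$ intermediate between $Fl_{n+1}$ and a smaller flag variety, use that $x_{n+1}$ is a section of the relative $\mathcal{O}(1)$, and deduce the Koszul sequence there, then push forward. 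Once the short exact sequence of sheaves on $Fl_{n+1}$ is in hand with the twist identified, the cohomological part is a direct diagram chase using the polynomiality/weight-space vanishing from the earlier results, and the $\GL_n\subset\GL_{n+1}$-equivariance of everything in sight makes the identification of weight spaces automatic.
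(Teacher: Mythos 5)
Your overall strategy (resolve $\iota_*\mc{O}_{Fl_n}(\ll^{[n]})$ Koszul-style on $Fl_{n+1}$, twist by $\mc{O}_{Fl_{n+1}}(\ll[n+1])$, and kill the error terms by the weight-space vanishing of Corollary~\ref{cor:dega-stab-coh=0}) is the same as the paper's, but the concrete devissage you write down has a genuine error: $Fl_n=Z(x_{n+1})$ is \emph{not} a divisor in $Fl_{n+1}$. Since $x_{n+1}$ is a section of the rank-$n$ bundle $\mc{Q}$, its zero locus has codimension $n=\dim Fl_{n+1}-\dim Fl_n$, so there is no short exact sequence $0\to\mc{O}(\ll[n+1])\oo\mc{L}\to\mc{O}(\ll[n+1])\to\iota_*\mc{O}_{Fl_n}(\ll^{[n]})\to 0$ with a single line bundle $\mc{L}$ (the ideal sheaf of $Fl_n$ is not locally principal), and your "intermediate $\PP(\mc{Q}^\vee)$ with $x_{n+1}$ a section of the relative $\mc{O}(1)$" does not rescue this -- it leads back to the same rank-$n$ situation. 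The correct resolution is the full Koszul complex with terms $\bw^i\mc{Q}^{\vee}\oo\mc{O}_{Fl_{n+1}}(\ll[n+1])$, $i=0,\dots,n$, together with the equivariant bookkeeping that the $i$-th term carries an extra factor $x_{n+1}^i$ of positive weight in the last coordinate; this is exactly what converts the statement "only weight spaces $(\ul a,b)$ with $b<0$ of $\mc{K}_i$ matter" into the vanishing you need, and it cannot be phrased via a single line-bundle twist.

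The second missing piece is that the Koszul terms $\bw^i\mc{Q}^{\vee}\oo\mc{O}(\ll[n+1])$ are not line bundles, so Corollary~\ref{cor:dega-stab-coh=0} does not apply to them directly. The paper filters them by line bundles $\mc{L}_R=\mc{O}_{Fl_{n+1}}(\ll[n+1]-\varepsilon_R)$, $R\subseteq\{1,\dots,n\}$, and the required vanishing then splits into two cases that your proposal does not address: if $R$ contains an index $r_i>r$, the shifted weight $\mu$ has $\mu_{r_i}-\mu_{r_i+1}=-1$, and one invokes the standard fact that such a line bundle has no cohomology at all (this is not a weight-space argument and is not covered by Theorem~\ref{thm:coh-lamn-poly}); if instead $R\subseteq\{1,\dots,r\}$, one checks that $\mu$ still satisfies $n+1\geq i-\mu_i$, so Corollary~\ref{cor:dega-stab-coh=0} (on $Fl_{n+1}$, for the shifted weight) kills the negative last-coordinate weight spaces. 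Without the length-$n$ hypercohomology spectral sequence, the filtration into the $\mc{L}_R$, and this case analysis, the "diagram chase killing the connecting maps" in your two-term picture does not establish the isomorphism.
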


\begin{proof}
 We write $G=\GL_n \times\, \kk^{\times}\subset\GL_{n+1}$, and write $x_{n+1}^i\cdot\kk$ for the $1$-dimensional $G$-module where $\GL_n$ acts trivially, and $\kk^{\times}$ acts on $x_{n+1}$ via the usual scaling action. For a $G$-equivariant sheaf $\mc{F}$, we write $x_{n+1}^i\cdot\mc{F} = x_{n+1}^i\cdot\kk \oo_{\kk} \mc{F}$. The section $x_{n+1}$ gives rise to a $G$-equivariant map $x_{n+1}\cdot\mc{Q}^{\vee} \lra \mc{O}_{Fl_{n+1}}$ whose image is the ideal sheaf of $Fl_n$, and hence is resolved by the associated Koszul resolution. Using the fact that $\iota^*\left(\mc{O}_{Fl_{n+1}}(\ll^{[n+1]})\right) = \mc{O}_{Fl_{n}}(\ll^{[n]})$, we get a $G$-equivariant resolution of $\mc{O}_{Fl_n}(\ll^{[n]}) = \iota_*\left(\mc{O}_{Fl_n}(\ll^{[n]})\right)$ by locally free sheaves $x_{n+1}^i \cdot\mc{K}_i$ on $Fl_{n+1}$, where
\[ \mc{K}_i = \bw^i\mc{Q}^{\vee} \oo \mc{O}_{Fl_{n+1}}(\ll^{[n+1]}) \quad\text{ for }i=0,\cdots,n.\]
The desired conclusion follows by considering the associated hypercohomology spectral sequence, if we can show that $H^j\left(Fl_{n+1},x_{n+1}^i \cdot\mc{K}_i\right)_{(\ul{a},0)}=0$ for $i>0$ and $\ul{a}\in\bb{Z}^n$. This in turn follows once we verify that
\begin{equation}\label{eq:HK_i=0-neg-weights} H^j\left(Fl_{n+1},\mc{K}_i\right)_{(\ul{a},b)}=0\quad\text{ for }b<0,\ \ul{a}\in\bb{Z}^n,
\end{equation}
which we do next. We write $\varepsilon_r$ for the $r$-th standard unit vector, and
\[ \varepsilon_R = \varepsilon_{r_1} + \cdots + \varepsilon_{r_i} \quad\text{ for }R = \{r_1,\cdots,r_i\}.\]
The sheaf $\mc{K}_i$ has a natural filtration (see for instance \cite{BCRV}*{Lemma~9.4.2}) with composition factors given by line bundles of the form 
\[ \mc{L}_R = \mc{O}_{Fl_{n+1}}(\ll^{[n+1]}-\varepsilon_R)\quad\text{ with }R = \{r_1<\cdots<r_i\}\subseteq\{1,\cdots,n\},\]
so it suffices to prove the vanishing in \eqref{eq:HK_i=0-neg-weights} for each $\mc{L}_R$. If $r_i>r$ then the weight $\mu=\ll^{[n+1]}-\varepsilon_R$ satisfies $\mu_{r_i}-\mu_{r_i+1}=-1$, and therefore $H^j\left(Fl_{n+1},\mc{L}_R\right)=0$ for all $j$ (for instance by \cite{BCRV}*{Lemma~9.3.4}). We may therefore assume that $r_i\leq r$ and thus the weight $\mu=\ll^{[n+1]}-\varepsilon_R$ satisfies
\[ \ll_i-1 \leq \mu_i \leq \ll_i \quad\text{for }i=1,\cdots,r,\quad \mu_i=0\quad\text{for }i>r.\]
It follows that $n+1\geq i-\mu_i$ for $i=1,\cdots,r$ and we can apply Corollary~\ref{cor:dega-stab-coh=0} to deduce that 
\[ H^j\left(Fl_{n+1},\mc{L}_R\right)_{(\ul{a},b)}=0\quad\text{when }b<0,\] 
concluding our proof.
\end{proof}

\begin{corollary}\label{cor:stab-coh=poly-functor}
 For $\ll\in\bb{Z}^r$ and $j\geq 0$, there exists a polynomial functor $\mc{P}_{\ll}^j$ of degree $|\ll|$ such that
 \[ H^j\left(Fl_{n},\mc{O}_{Fl_{n}}(\ll^{[n]})\right) = \mc{P}_{\ll}^j(\kk^n) \text{ for all }n\geq\max\{i-\ll_i:i=1,\cdots,r\}.\]
\end{corollary}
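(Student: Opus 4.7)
The approach combines the two preceding theorems: Theorem~\ref{thm:coh-lamn-poly} supplies the polynomial $\GL_n$-representation structure on cohomology, while Theorem~\ref{thm:iso-coh-dega} furnishes the weight-space compatibility needed to assemble a single polynomial functor working for every $n$ in the range.

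First, I would identify the degree. For $n\geq \max_i\{i-\ll_i\}$, Theorem~\ref{thm:coh-lamn-poly} gives that $H^j(Fl_n,\mc{O}_{Fl_n}(\ll^{[n]}))$ is a polynomial $\GL_n$-representation. The center $\kk^{\times}\subset\GL_n$ acts on $\mc{O}_{Fl_n}(\ll^{[n]})$ via the character $c\mapsto c^{|\ll^{[n]}|}=c^{|\ll|}$, and hence acts on cohomology by the same scalar. Consequently every weight $\ul{a}$ with nonzero weight space satisfies $|\ul{a}|=|\ll|$, so the representation is homogeneous of degree $|\ll|$. In particular, if $|\ll|<0$ then the cohomology must vanish and we can simply take $\mc{P}_{\ll}^j=0$, so we may henceforth assume $|\ll|\geq 0$.

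Next, choose $N\geq\max(|\ll|,\max_i\{i-\ll_i\})$. Since $N\geq|\ll|$, evaluation at $\kk^N$ gives an equivalence between $\mf{Pol}_{|\ll|}$ and the category of polynomial $\GL_N$-representations of degree $|\ll|$ (see \cite{fri-sus}*{Section~3}). This produces a unique $\mc{P}_{\ll}^j\in\mf{Pol}_{|\ll|}$ with $\mc{P}_{\ll}^j(\kk^N)=H^j(Fl_N,\mc{O}_{Fl_N}(\ll^{[N]}))$. This polynomial functor is our candidate.

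It remains to verify $\mc{P}_{\ll}^j(\kk^n)=H^j(Fl_n,\mc{O}_{Fl_n}(\ll^{[n]}))$ for every $n$ in the stated range. For any polynomial functor $\mc{Q}$, the embedding $\kk^n\hra\kk^{n+1}$ induces a canonical identification of weight spaces $\mc{Q}(\kk^n)_{\ul{a}}=\mc{Q}(\kk^{n+1})_{(\ul{a},0)}$ for every $\ul{a}\in\bb{Z}^n$. Iterating Theorem~\ref{thm:iso-coh-dega} (ascending or descending between $n$ and $N$ as needed) yields the analogous compatibility on cohomology. Chaining these identifications gives
\[
\mc{P}_{\ll}^j(\kk^n)_{\ul{a}} = \mc{P}_{\ll}^j(\kk^N)_{(\ul{a},0,\ldots,0)} = H^j(Fl_N,\mc{O}_{Fl_N}(\ll^{[N]}))_{(\ul{a},0,\ldots,0)} = H^j(Fl_n,\mc{O}_{Fl_n}(\ll^{[n]}))_{\ul{a}}
\]
for every $\ul{a}\in\bb{Z}^n_{\geq 0}$; Corollary~\ref{cor:dega-stab-coh=0} (together with the fact that $\mc{P}_{\ll}^j$ is a polynomial functor) ensures that weights with a negative coordinate contribute nothing on either side. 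Since two polynomial $\GL_n$-representations of a fixed degree with matching weight-space decompositions coincide as representations, this finishes the proof.

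The main subtlety is that in the range $\max_i\{i-\ll_i\}\leq n<|\ll|$, evaluation at $\kk^n$ is no longer an equivalence between $\mf{Pol}_{|\ll|}$ and polynomial $\GL_n$-representations, so one cannot construct $\mc{P}_{\ll}^j$ directly from the cohomology at such an~$n$. The resolution is exactly the strategy above: define $\mc{P}_{\ll}^j$ at the larger size $N$ where the equivalence is available, then transport the identification to smaller $n$ via the weight-space stabilization provided by Theorem~\ref{thm:iso-coh-dega}.
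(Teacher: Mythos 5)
Your overall route is the same as the paper's: construct $\mc{P}^j_{\ll}$ at a large size $N\geq\max\left(|\ll|,\max_i\{i-\ll_i\}\right)$ using the equivalence between $\mf{Pol}_{|\ll|}$ and polynomial $\GL_N$-representations of degree $|\ll|$, then transport the identification down to smaller $n$ by iterating Theorem~\ref{thm:iso-coh-dega}; the paper's one-line proof cites exactly these ingredients, together with \cite{green}*{(6.5c)}.

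The gap is in your final step: you close the argument by asserting that two polynomial $\GL_n$-representations of a fixed degree with matching weight-space decompositions coincide. That principle is false in positive characteristic, which is precisely the setting of interest: for $d\geq p=\chr(\kk)$ the modules $\Sym^d(\kk^n)$ and $\op{D}^d(\kk^n)$ have identical characters but are not isomorphic, so equal weight multiplicities only pin down the class in the Grothendieck group, not the module. What rescues the argument --- and what the citation of \cite{green}*{(6.5c)} is doing in the paper --- is that your weight-space identifications are not arbitrary: the map in Theorem~\ref{thm:iso-coh-dega} is induced by restriction along the $(\GL_n\times\kk^{\times})$-equivariant inclusion $\iota$, so its direct sum over all $\ul{a}\in\bb{Z}^n$ is a $\GL_n$-equivariant isomorphism from the span of those weight spaces of $H^j\left(Fl_{n+1},\mc{O}_{Fl_{n+1}}(\ll[n+1])\right)$ whose last coordinate vanishes (Green's truncation functor applied to that cohomology) onto $H^j\left(Fl_n,\mc{O}_{Fl_n}(\ll^{[n]})\right)$; and on the functor side, the span of the weight spaces of $\mc{Q}(\kk^N)$ with vanishing last $N-n$ coordinates is $\GL_n$-equivariantly isomorphic to $\mc{Q}(\kk^n)$ for any $\mc{Q}\in\mf{Pol}_{|\ll|}$, which is the content of \cite{green}*{(6.5c)}. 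Once you make the equivariance of these assembled maps explicit, your chain of identifications is an isomorphism of $\GL_n$-representations and the proof closes; as written, the last inference does not follow.
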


\begin{proof}
    Using the equivalence between polynomial functors of degree $|\ll|$ and polynomial representations of $\GL_n$ \cite{fri-sus}*{Lemma~3.4}, together with Theorem~\ref{thm:iso-coh-dega} and \cite{green}*{(6.5c)},  we obtain the desired conclusion. 
\end{proof}

\section{Polynomial functors of $\Omega$}\label{sec:poly-omega}

The goal of this section is to study the cohomology of $\mc{P}(\Omega)$ when $\mc{P}$ is a polynomial functor and $\Omega$ is the cotangent sheaf on projective space $\PP^{n-1}$. Although for small values of $n$ this can be quite complicated, we prove that as $n\to\infty$ the cohomology stabilizes, and we provide effective bounds for the stable range. In later sections we will give a number of explicit calculations of stable cohomology, but for now we focus on more qualitative statements. The most familiar instance of stabilization occurs when $\mc{P}=\bw^d$ is an exterior power functor. In that case $\mc{P}(\Omega) = \Omega^d$ is the sheaf of $d$-differential forms when $n\geq d+1$ (or zero when $n\leq d$), in which case the only non-zero cohomology group is given by
\begin{equation}\label{eq:Hd-of-Omegad}
H^d\left(\PP^{n-1},\Omega^d\right) = \kk.
\end{equation}
For more general polynomial functors we prove the following.

\begin{theorem}\label{thm:coh-omega}
 Suppose that $\mc{P}$ is a polynomial functor of degree $d$.
 \begin{enumerate}
  \item The action of $\GL_n$ on $H^j\left(\PP^{n-1},\mc{P}(\Omega)\right)$ is trivial for all $j<n-1$.
  \item If $n-1\geq d$ then the action of $\GL_n$ on $H^{n-1}\left(\PP^{n-1},\mc{P}(\Omega)\right)$ is trivial.
  \item If $e<0$ then $H^j\left(\PP^{n-1},\mc{P}(\Omega)\oo\mc{O}_{\PP^{n-1}}(e)\right)=0$ for all $j<n-1$.
  \item If $d-n+1\leq e\leq -1$ then $H^{n-1}\left(\PP^{n-1},\mc{P}(\Omega)\oo\mc{O}_{\PP^{n-1}}(e)\right)=0$.
 \end{enumerate}
\end{theorem}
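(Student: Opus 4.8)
The plan is to reduce to the case where $\mc{P}$ is a tensor product of exterior powers and then resolve those by the Koszul complex on $\PP^{n-1}$. First I would use that in $\mf{Pol}_d$ every object $\mc{P}$ admits a finite co-resolution $0 \to \mc{P} \to \mc{T}^0 \to \mc{T}^1 \to \cdots \to \mc{T}^N \to 0$ whose terms $\mc{T}^i$ are direct sums of functors $\bw^{j_1}\otimes\cdots\otimes\bw^{j_k}$ with $j_1+\cdots+j_k = d$. Such co-resolutions exist because $\mf{Pol}_d$ has finite global dimension and the "exterior" functors are tilting objects generating the tilting subcategory; alternatively, one first co-resolves $\mc{P}$ by objects with a good filtration (iterated extensions of Schur functors $\bb{S}_\mu$) and then invokes the Akin--Buchsbaum resolutions of the $\bb{S}_\mu$ developed in Section~\ref{sec:resolutions-ext-pows}. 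Since exactness of a short exact sequence of polynomial functors may be checked after evaluation at vector spaces, it is preserved after passing to the stalks of $\Omega$, so $\mc{P}(\Omega)(e)$ is quasi-isomorphic to the complex with terms $\bigoplus \Omega^{j_1}\otimes\cdots\otimes\Omega^{j_k}(e)$. Running the associated hypercohomology spectral sequence, whose $E_1$-page in cohomological degree $s\geq 0$ is $\bigoplus H^q\!\left(\PP^{n-1},\Omega^{j_1}\otimes\cdots\otimes\Omega^{j_k}(e)\right)$, reduces all four assertions to the corresponding statements for $\mc{P}$ a tensor product of exterior powers.

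For such $\mc{P}$ I would use the Koszul complex on $\PP^{n-1}$ attached to the Euler sequence, which exhibits $\Omega^j$ as the kernel term of an exact co-resolution $0 \to \Omega^j \to \bw^j(\kk^n)^\vee\otimes\mathcal{O}(-j) \to \cdots \to (\kk^n)^\vee\otimes\mathcal{O}(-1) \to \mathcal{O} \to 0$ with terms $\bw^i(\kk^n)^\vee\otimes\mathcal{O}(-i)$, $0\leq i\leq j$. Tensoring one such co-resolution for each factor yields a co-resolution of $\Omega^{j_1}\otimes\cdots\otimes\Omega^{j_k}(e)$ in cohomological degrees $[0,d]$ whose degree-$s$ term is a sum of sheaves $\bigotimes_l\bw^{i_l}(\kk^n)^\vee \otimes \mathcal{O}(e-d+s)$ with $\sum_l i_l = d-s$; since $e\leq 0$ and $d=\sum j_l\geq 1$, the twist $e-d+s$ is always $\leq 0$, and vanishes only when $e=0$ and $s=d$. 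Feeding this into the hypercohomology spectral sequence and using the elementary description of $H^q(\PP^{n-1},\mathcal{O}(t))$ for $t\leq 0$ — it vanishes unless $q=n-1$, in which case it is nonzero exactly when $t\leq -n$, while for $t=0$ it is the trivial representation in degree $q=0$ — one finds that for $j<n-1$ the only $E_1$-entry that can survive in total degree $j$ is (when $e=0$) a trivial $\GL_n$-representation contained in $\kk$; this gives (1) and (3). For (2) and (4) the only additional entry in total degree $n-1$ is the top-row term $E_1^{0,n-1}$, built from $H^{n-1}(\PP^{n-1},\mathcal{O}(e-d))$; this vanishes precisely because the hypotheses $n-1\geq d$ and $e\geq d-n+1$ each force $e-d > -n$, while the remaining entries are again trivial (giving (2)) or zero (giving (4)).

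The main obstacle I anticipate is the very first step — producing, in positive characteristic, a finite resolution of an arbitrary $\mc{P}\in\mf{Pol}_d$ by tensor products of exterior power functors, with the homogeneity of all terms under control — and it is cleanest to obtain it by combining the existence of finite good-filtration co-resolutions in $\mf{Pol}_d$ with the Akin--Buchsbaum resolutions of $\bb{S}_\mu$ from Section~\ref{sec:resolutions-ext-pows}. The rest is bookkeeping with two nested spectral sequences; the point requiring care is to distinguish $E_1$-entries that vanish outright from those that are merely trivial as $\GL_n$-representations, using that trivial representations are closed under subquotients and extensions (since $H^1(\GL_n,\kk)=0$), so that the stated conclusions about the abutments are not weakened.
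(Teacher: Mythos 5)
Your reduction step is where the argument breaks. The entire strategy rests on the claim that every $\mc{P}\in\mf{Pol}_d$ admits a finite \emph{co}-resolution $0\to\mc{P}\to\mc{T}^0\to\cdots\to\mc{T}^N\to 0$ by direct sums of functors $\bw^{j_1}\oo\cdots\oo\bw^{j_k}$, and this is false in characteristic $p$. Take $\mc{P}=F^p$ (so $d=p$). Any nonzero map $F^p\to\bw^{\mu_1}\oo\cdots\oo\bw^{\mu_k}$ would be injective by simplicity, so the target would have a nonzero weight space of weight $p\epsilon_1$; but the weights of a tensor product of exterior powers are sums of $0$--$1$ vectors, which forces every $\mu_i=1$, i.e.\ the target is the $p$-th tensor power functor $V\mapsto V^{\oo p}$. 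And $\Hom(F^p,\oo^p)=0$: naturality applied to the maps $\kk\to V$, $1\mapsto v$, shows any transformation must send $v^p\mapsto c\,v^{\oo p}$, and then $\kk$-linearity together with $(v+w)^p=v^p+w^p$ gives $c\bigl((v+w)^{\oo p}-v^{\oo p}-w^{\oo p}\bigr)=0$, so $c=0$. Hence $F^p$ admits no nonzero map to any sum of exterior tensor products, let alone an embedding, so no co-resolution of the proposed kind exists (the $\bw^\mu$ are indeed tilting, but only objects with a Weyl filtration have finite tilting co-resolutions, and $F^p$ has none). A sanity check against the paper's own results: if $\Sym^p$ had such a co-resolution, your spectral sequence would confine its stable cohomology to degrees $\geq d$, contradicting $h^1_{st}(\Sym^p\Omega)\neq 0$ from Theorem~\ref{thm:gen-fun-stab-coh-SymOmega}.

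The fallback you suggest does not repair this: the Akin--Buchsbaum and Santana--Yudin complexes of Section~\ref{sec:resolutions-ext-pows} resolve $\bb{S}_\mu$ from the \emph{left}, so splicing them into an injective (Schur-filtered) co-resolution of $\mc{P}$ only yields a finite complex of exterior tensor products spread over both negative and positive cohomological degrees. Once terms occupy negative degrees, the row $q=n-1$ of your $E_1$-page feeds into total degrees below $n-1$, and parts (1) and (3) no longer follow by bookkeeping; these contributions are genuinely nonzero in general -- this is precisely the mechanism by which Sections~\ref{sec:resolutions-ext-pows}--\ref{sec:arithmetic} produce nonzero stable cohomology of $\bb{S}_\ll\Omega$ in degrees far below $n-1$. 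The paper avoids global resolutions altogether: it proves the theorem by a double induction, first on $d$, using the Euler sequence together with Kouwenhoven's filtration result \cite{kou} to fit $\bb{S}_\mu\Omega$ into $0\to\bb{S}_\mu\Omega\to\bb{S}_\mu V(-d)\to\mc{E}\to 0$ with $\mc{E}$ filtered by lower-degree functors of $\Omega$, and then along the dominance order to pass from $\bb{S}_\mu$ to $\bb{L}_\mu$ and hence to all $\mc{P}$. Your Koszul computation for tensor products of exterior powers, and the observation that trivial $\GL_n$-representations are closed under subquotients and extensions, are both correct; the failure is confined to the first reduction, but it is fatal to the proposed route.
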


\begin{theorem}\label{thm:stab-coh-omega}
 Let $\mc{P}$ be a polynomial functor of degree $d$.
 \begin{enumerate}
     \item For a fixed $j$, the groups $H^j\left(\PP^{n-1},\mc{P}(\Omega)\right)$ are independent of $n$ as long as $j<n-1$.
     \item The groups $H^j\left(\PP^{n-1},\mc{P}(\Omega)\right)$ are independent of $n$ as long as $d\leq n-1$.
     \item We have $H^j\left(\PP^{n-1},\mc{P}(\Omega)\right)=0$ for $j>d$.
 \end{enumerate}
\end{theorem}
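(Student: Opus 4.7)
The strategy is to compare cohomology on consecutive projective spaces via a hyperplane. Let $\iota:\PP^{n-1}\hra\PP^n$ be cut out by a single coordinate. The conormal sequence splits, yielding $\iota^*\Omega_{\PP^n}\cong\mc{O}_{\PP^{n-1}}(-1)\oplus\Omega_{\PP^{n-1}}$, so that applying the degree $d$ polynomial functor $\mc{P}$ gives a decomposition
\[ \iota^*\mc{P}(\Omega_{\PP^n}) \cong \bigoplus_{i=0}^d \mc{O}_{\PP^{n-1}}(-i)\oo\mc{Q}_i(\Omega_{\PP^{n-1}}), \]
where each $\mc{Q}_i$ is a polynomial functor of degree $d-i$ with $\mc{Q}_0=\mc{P}$. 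Tensoring the ideal sequence $0\to\mc{O}_{\PP^n}(-1)\to\mc{O}_{\PP^n}\to\iota_*\mc{O}_{\PP^{n-1}}\to 0$ with $\mc{P}(\Omega_{\PP^n})$ produces the fundamental short exact sequence on $\PP^n$, whose associated long exact sequence in cohomology drives the entire argument.

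For part (1), fix $j<n-1$. Both $H^j$ and $H^{j+1}$ of $\mc{P}(\Omega_{\PP^n})(-1)$ vanish by Theorem~\ref{thm:coh-omega}(3), since $j+1\leq n-1<n$. The LES therefore gives $H^j(\PP^n,\mc{P}(\Omega_{\PP^n}))\cong\bigoplus_i H^j(\PP^{n-1},\mc{Q}_i(\Omega)(-i))$, and a second application of Theorem~\ref{thm:coh-omega}(3) kills every $i\geq 1$ summand (again using $j<n-1$). This identifies $H^j(\PP^n,\mc{P}(\Omega_{\PP^n}))$ with $H^j(\PP^{n-1},\mc{P}(\Omega_{\PP^{n-1}}))$.

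Part (3) is established by induction on $n$. The case $j\geq n$ is automatic. For $j=n-1>d$ (so $d\leq n-2$), apply the SES associated to $\PP^{n-2}\hra\PP^{n-1}$: Theorem~\ref{thm:coh-omega}(4) with $e=-1$ gives $H^{n-1}(\PP^{n-1},\mc{P}(\Omega)(-1))=0$, while $H^{n-1}(\PP^{n-2},-)=0$ trivially. For the intermediate range $d<j<n-1$, the same SES yields vanishing: the twist term is killed by Theorem~\ref{thm:coh-omega}(3), the $i=0$ summand of the restriction vanishes by the inductive hypothesis, and each $i\geq 1$ summand vanishes by Theorem~\ref{thm:coh-omega}(3) (when $j<n-2$) or (4) (when $j=n-2$, using $d\leq n-3$).

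Part (2) is then a consequence of (1) and (3) for almost all configurations: for $n\geq d+1$ and fixed $j$, part (1) provides the stability whenever $j<n-1$, while part (3) forces vanishing whenever $j>d$. The only case not directly handled is $j=d=n-1$, namely the comparison of $H^d(\PP^d,\mc{P}(\Omega))$ with $H^d(\PP^{d+1},\mc{P}(\Omega))$, and I expect this to be the main obstacle. The LES on $\PP^{d+1}$ presents $H^d(\PP^{d+1},\mc{P}(\Omega))$ as the kernel of a map
\[ \bigoplus_i H^d(\PP^d,\mc{Q}_i(\Omega)(-i)) \lra H^{d+1}(\PP^{d+1},\mc{P}(\Omega)(-1)), \]
and one must identify this kernel with the $i=0$ summand $H^d(\PP^d,\mc{P}(\Omega))$. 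I plan to resolve this by an explicit analysis of the connecting homomorphism, or equivalently by a dimension count via Serre duality together with the identity $\mc{P}^{\#}(\mc{T})\cong\mc{P}^{\#}(\mc{T}(-1))\oo\mc{O}(d)$ (valid for any polynomial functor $\mc{P}^{\#}$ of degree $d$ applied to a line-bundle twist), which reduces both sides of the desired equality to sections of bundles twisted by $\mc{O}(-1)$ on the respective projective spaces.
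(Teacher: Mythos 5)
Your overall route is the same as the paper's: the ideal-sheaf sequence for a hyperplane, the split relative cotangent sequence, and the decomposition $\iota^*\mc{P}(\Omega_{\PP^n})\cong\bigoplus_i \mc{Q}_i(\Omega_{\PP^{n-1}})(-i)$ (the paper cites Draisma's Lemma 14 for this), followed by the long exact sequence and Theorem~\ref{thm:coh-omega}. Parts (1) and (3) are correct; your part (3) runs a direct induction on $n$, whereas the paper deduces (3) from (2) by passing to $\PP^d$ and invoking Grothendieck vanishing — a harmless difference.

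The genuine gap is part (2): you only obtain it from (1) and (3) away from the boundary, and you explicitly leave open the case $j=d=n-1$, i.e.\ the comparison of $H^d(\PP^{d},\mc{P}(\Omega))$ with $H^d(\PP^{d+1},\mc{P}(\Omega))$, proposing (but not carrying out) an analysis of the connecting map or a Serre-duality dimension count. No such machinery is needed: the case is closed by Theorem~\ref{thm:coh-omega}(4), which you already use elsewhere. On $\PP^{d+1}$ (so the theorem applies with ``$n$''$=d+2$) and twist $e=-1$, the hypothesis $d-(d+2)+1\leq -1\leq -1$ holds, so $H^{d+1}\left(\PP^{d+1},\mc{P}(\Omega)(-1)\right)=0$; combined with part (3) of that theorem for $H^d$ of the twist, the long exact sequence gives $H^d\left(\PP^{d+1},\mc{P}(\Omega)\right)\cong H^d\left(\PP^{d},\iota^*\mc{P}(\Omega_{\PP^{d+1}})\right)$. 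Then, for each $i\geq 1$, the summand $H^d\left(\PP^{d},\mc{Q}_i(\Omega)(-i)\right)$ is a top-degree group on $\PP^d$ for a functor of degree $d-i$, and Theorem~\ref{thm:coh-omega}(4) applies since $(d-i)-(d+1)+1\leq -i\leq -1$; hence only the $i=0$ summand survives and the boundary case follows. This is precisely how the paper's proof absorbs $j=n-1$: it states the key isomorphism and the vanishing of the $i\geq 1$ summands under the alternative ``$j<n-1$ or $d\leq n-1$'', with part (4) supplying the top-degree instances, so parts (1) and (2) are proved simultaneously with no special case left over.
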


In light of Theorem~\ref{thm:stab-coh-omega}, we can define the \defi{stable cohomology of $\mc{P}(\Omega)$} to be
\[H^j_{st}(\mc{P}(\Omega)) = H^j\left(\PP^{n-1},\mc{P}(\Omega)\right) \text{ for any }n>j+1.\]
Likewise, the \defi{stable cohomology polynomial} $h_{\mc{P} (\Omega)} (t)$ is defined to be the polynomial
$$h_{\mc{P} (\Omega)} (t) := \sum_{j \geq 0} \dim_\kk H^j_{st} (\mc{P} (\Omega))\cdot t^j.$$

The following theorems provide effective versions of the K\"unneth formula and the invariance under Frobenius discussed in the Introduction.

\begin{theorem}\label{thm:Kunneth-coh-omega}
    Suppose that $\mc{P}$, $\mc{P}'$ are polynomial functors of degrees $d,d'$ respectively. If $n>d+d'$ then 
    \[ H^j\left(\PP^{n-1},\mc{P}(\Omega)\oo\mc{P}'(\Omega)\right)=\bigoplus_{j_1+j_2=j}H^{j_1}\left(\PP^{n-1},\mc{P}(\Omega)\right) \oo H^{j_2}\left(\PP^{n-1},\mc{P}'(\Omega)\right). \]
\end{theorem}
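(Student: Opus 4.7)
The plan is to invoke Beilinson's resolution of the diagonal embedding $\iota \colon \PP^{n-1} \hookrightarrow \PP^{n-1} \times \PP^{n-1}$, which resolves $\iota_* \mc{O}_{\PP^{n-1}}$ by a complex with terms $\Omega^i(i) \boxtimes \mc{O}(-i)$ for $i = 0, \ldots, n-1$. Tensoring this locally free resolution with $\mc{P}(\Omega) \boxtimes \mc{P}'(\Omega)$ and using that $\iota^*(\mc{P}(\Omega) \boxtimes \mc{P}'(\Omega)) = \mc{P}(\Omega) \oo \mc{P}'(\Omega)$, one obtains a hypercohomology spectral sequence whose $E_1$ page, after the external K\"unneth formula on $\PP^{n-1} \times \PP^{n-1}$, reads
\[ E_1^{-i,q} = \bigoplus_{q_1+q_2=q} H^{q_1}\bigl(\PP^{n-1}, \Omega^i(i) \oo \mc{P}(\Omega)\bigr) \oo H^{q_2}\bigl(\PP^{n-1}, \mc{P}'(\Omega)(-i)\bigr) \]
and converges to $H^{q-i}(\PP^{n-1}, \mc{P}(\Omega) \oo \mc{P}'(\Omega))$. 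Since the $i = 0$ column already delivers the desired K\"unneth sum, the task reduces to showing that $E_1^{-i,q} = 0$ for every $i \geq 1$.

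For $1 \leq i \leq n-1-d'$ the vanishing is immediate: Theorem~\ref{thm:coh-omega}(3) kills $H^{q_2}(\mc{P}'(\Omega)(-i))$ when $q_2 < n-1$, while Theorem~\ref{thm:coh-omega}(4), applied with $e = -i \in [d'-n+1, -1]$, handles the top cohomology.

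The main obstacle lies in the range $n-d' \leq i \leq n-1$, where $H^{n-1}(\mc{P}'(\Omega)(-i))$ may fail to vanish; the strategy here is to force the other factor $H^*(\Omega^i(i) \oo \mc{P}(\Omega))$ to vanish instead. Setting $j = n-1-i \in [0, d'-1]$ and using the wedge-product pairing $\Omega^i \oo \Omega^{n-1-i} \to \det \Omega = \mc{O}(-n)$, one identifies $\Omega^i(i) \cong \bw^j T \oo \mc{O}(-j-1)$ with $T := \Omega^\vee$. Writing $W := H^0(\PP^{n-1}, \mc{O}(1))$ for the $n$-dimensional vector space in the dual Euler sequence $0 \to \mc{O} \to W \oo \mc{O}(1) \to T \to 0$, and iterating the filtration on $\bw^k$ of this sequence (whose rank-one sub leaves only two surviving graded pieces at each stage) produces the Koszul-type resolution
\[ 0 \to \mc{O} \to W \oo \mc{O}(1) \to \bw^2 W \oo \mc{O}(2) \to \cdots \to \bw^j W \oo \mc{O}(j) \to \bw^j T \to 0. \]
Tensoring with $\mc{P}(\Omega)(-j-1)$ resolves $\Omega^i(i) \oo \mc{P}(\Omega)$ by the $j+1$ terms $\bw^k W \oo \mc{P}(\Omega)(k-j-1)$ for $0 \leq k \leq j$. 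Each twist $e = k-j-1$ lies in $[-j-1, -1] \subseteq [-d', -1]$, and the lower bound satisfies $-d' \geq d-n+1$ precisely because of the hypothesis $n > d + d'$. Parts (3) and (4) of Theorem~\ref{thm:coh-omega} then force $H^*(\mc{P}(\Omega)(e)) = 0$ for every term, and a second hypercohomology argument applied to this shorter resolution yields $H^*(\Omega^i(i) \oo \mc{P}(\Omega)) = 0$.

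With every $i \geq 1$ column of the Beilinson spectral sequence annihilated, it collapses onto the $i = 0$ column, and the external K\"unneth formula reads off
\[ H^j(\PP^{n-1}, \mc{P}(\Omega) \oo \mc{P}'(\Omega)) \cong \bigoplus_{j_1+j_2=j} H^{j_1}(\PP^{n-1}, \mc{P}(\Omega)) \oo H^{j_2}(\PP^{n-1}, \mc{P}'(\Omega)), \]
as desired.
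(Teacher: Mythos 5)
Your proposal is correct and follows essentially the same route as the paper: Beilinson's resolution of the diagonal, the external K\"unneth formula, and killing every column $i\geq 1$ by annihilating either $H^*(\mc{P}'(\Omega)(-i))$ (small $i$) or $H^*(\Omega^i(i)\oo\mc{P}(\Omega))$ (large $i$) via a Koszul-type resolution whose terms are $\mc{P}(\Omega)(e)$ with $d-n+1\leq e\leq -1$, then Theorem~\ref{thm:coh-omega}(3),(4). The only cosmetic difference is that you resolve $\Omega^i(i)\cong\bw^{n-1-i}T\oo\mc{O}(i-n)$ through the dual Euler sequence, whereas the paper resolves $\bw^i\mc{R}$ directly by $\bw^{i+1+\bullet}V\oo\mc{O}(-\bullet-1)$; the two resolutions produce the same twists and the same vanishing inputs.
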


\begin{theorem}\label{thm:invariance-Frob}
    Suppose that $\mc{P}$ is a polynomial functor of degree $d$. If $n-1\geq dp$ then
     \[H^j\left(\PP^{n-1},(F^p\circ\mc{P})(\Omega)\right) = H^j\left(\PP^{n-1},(\mc{P}\circ F^p)(\Omega)\right) = H^j\left(\PP^{n-1},\mc{P}(\Omega)\right).\]
\end{theorem}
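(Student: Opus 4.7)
My plan is to identify both $(F^p\circ\mc{P})(\Omega)$ and $(\mc{P}\circ F^p)(\Omega)$ with the absolute Frobenius pullback $F^*\mc{P}(\Omega)$, where $F:\PP^{n-1}\to\PP^{n-1}$ is the absolute Frobenius morphism, and then exploit the classical line-bundle splitting of $F_*\mc{O}_{\PP^{n-1}}$ combined with Theorem~\ref{thm:coh-omega} to kill all but the trivial summand.

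First I would set up the identifications using transition functions. For a locally free sheaf $\mc{E}$ with transition matrices $g_{ij}$, the Frobenius pullback $F^*\mc{E}$ has transition matrices $g_{ij}^{(p)}$ obtained by raising each matrix entry to the $p$-th power. A direct calculation in a basis $\{e_i^p\}$ of $F^pV\subset\Sym^pV$ shows that the Frobenius power functor $F^p$ sends a matrix $g=(a_{ij})$ to $(a_{ij}^p)=g^{(p)}$, whence $F^p\mc{E}\cong F^*\mc{E}$ canonically. Since Schur functors are defined over $\bb{Z}$, the entries of $\mc{P}(g)$ are integer-coefficient polynomials in the entries of $g$, and the identity $(a+b)^p=a^p+b^p$ in characteristic $p$ gives $\mc{P}(g)^{(p)}=\mc{P}(g^{(p)})$; this yields a natural isomorphism $\mc{P}(F^*\mc{E})\cong F^*\mc{P}(\mc{E})$. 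Combining these,
\[
(F^p\circ\mc{P})(\Omega) = F^p\bigl(\mc{P}(\Omega)\bigr) \cong F^*\mc{P}(\Omega) \cong \mc{P}(F^*\Omega) = (\mc{P}\circ F^p)(\Omega),
\]
reducing the theorem to proving $H^j(\PP^{n-1},F^*\mc{P}(\Omega)) = H^j(\PP^{n-1},\mc{P}(\Omega))$ whenever $n-1\geq dp$.

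Since $F$ is finite, the projection formula yields
\[
H^j\bigl(\PP^{n-1},F^*\mc{P}(\Omega)\bigr) = H^j\bigl(\PP^{n-1},\mc{P}(\Omega)\oo F_*\mc{O}_{\PP^{n-1}}\bigr),
\]
and I would then invoke the classical decomposition
\[
F_*\mc{O}_{\PP^{n-1}} \;\cong\; \mc{O}_{\PP^{n-1}} \;\oplus\; \bigoplus_{a=1}^{A}\mc{O}_{\PP^{n-1}}(-a)^{\oplus m_a},\qquad A = n-1-\lfloor(n-1)/p\rfloor,
\]
verified by Hilbert-function comparison using that $\kk[x_1,\dots,x_n]$ is a free $\kk[x_1^p,\dots,x_n^p]$-module on the monomial basis $\{x^{\alpha}:0\leq\alpha_i<p\}$. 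The $\mc{O}$-summand contributes $H^j(\PP^{n-1},\mc{P}(\Omega))$, so it remains to show that each $\mc{O}(-a)$-summand with $a\geq 1$ contributes nothing.

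For $1\leq a\leq A$, Theorem~\ref{thm:coh-omega}(3) gives $H^j(\PP^{n-1},\mc{P}(\Omega)(-a))=0$ for all $j<n-1$ unconditionally, while Theorem~\ref{thm:coh-omega}(4) extends the vanishing to the top degree $j=n-1$ provided $a\leq n-1-d$. The hypothesis $n-1\geq dp$ translates to $d\leq\lfloor(n-1)/p\rfloor$, which forces $A=n-1-\lfloor(n-1)/p\rfloor\leq n-1-d$, so every summand with $a\geq 1$ is killed in all cohomological degrees. The chief technical difficulty is establishing the exact bound $A=n-1-\lfloor(n-1)/p\rfloor$ on the maximal twist in the Frobenius splitting; a pleasant feature of the argument is that this bound meshes precisely with Theorem~\ref{thm:coh-omega}(4) under the stated hypothesis $n-1\geq dp$, making the range sharp for this strategy.
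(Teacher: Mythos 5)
Your proposal is correct and follows essentially the same route as the paper: identify the Frobenius composition with a Frobenius pullback, apply the projection formula, invoke the classical line-bundle splitting of the Frobenius pushforward of $\mc{O}_{\PP^{n-1}}$, and kill the negatively twisted summands via Theorem~\ref{thm:coh-omega}(3),(4), and your bound $n-1-\lfloor (n-1)/p\rfloor$ on the maximal twist agrees with the paper's $\lfloor n(p-1)/p\rfloor$. The only differences are cosmetic: the paper works with the relative Frobenius $\varphi:\bb{P}(V)\to\bb{P}(F^pV)$, a morphism of $\kk$-schemes, so $\varphi^*$ commutes with any polynomial functor without your appeal to the coefficients of $\mc{P}$ lying in the prime field (which for an arbitrary $\mc{P}$ holds only up to a Frobenius twist of the functor, harmless for cohomology dimensions), and it notes that the $\GL_n$-action on the relevant cohomology is trivial so the non-equivariance of Frobenius can be ignored.
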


\subsection{Trivial $\GL_n$-action and vanishing for negative twists}

The goal of this section is to prove Theorem~\ref{thm:coh-omega}. We write $\PP=\PP^{n-1}$, $V=\kk^n=H^0\left(\PP,\mc{O}_{\PP}(1)\right)$ and consider the Euler sequence
\begin{equation}\label{eq:Euler-sequence} 
0 \lra \Omega \lra V \oo \mc{O}_\PP(-1) \overset{\phi}{\lra} \mc{O}_\PP \lra 0,
\end{equation}
which is the basis for many of the subsequent constructions. We note that in order to prove Theorem~\ref{thm:coh-omega} for all polynomial functors of degree $d$, it suffices to prove it for all simple polynomial functors $\mc{P}=\bb{L}_{\mu}$, where $\mu$ is a partition of $d$. Our arguments will then be based on a double induction: first on the degree $d$, and then on the partition $\mu$ according to the dominance order. The proof strategy goes as follows:
\begin{itemize}
    \item Assuming Theorem~\ref{thm:coh-omega} is true for polynomial functors of degree $<d$, we verify it for all Schur functors $\mc{P}=\bb{S}_{\mu}$, where $\mu$ is a partition of $d$.
    \item Once we know the results for Schur functors $\bb{S}_{\mu}$ with $|\mu|=d$, we prove the theorem for $\bb{L}_{\mu}$ by induction on $\mu$, and conclude that it holds for all polynomial functors $\mc{P}$ of degree $d$.
\end{itemize}

\begin{proof}[Proof of Theorem~\ref{thm:coh-omega}]
    If $d=0$ then $\mc{P}(\Omega)$ is a direct sum of copies of $\mc{O}_{\PP}$ and the results follow from \cite{hartshorne}*{Theorem~III.5.1}. We assume that $d>0$ and that the theorem is true for all polynomial functors of degree~$<d$. 
        
    We next consider a Schur functor $\bb{S}_{\mu}$, where $\mu$ is a partition of $d$. Using \eqref{eq:Euler-sequence} and \cite{kou}*{Theorem~1.4(b)} we get a short exact sequence
    \[ 0 \lra \bb{S}_{\mu}\Omega \lra \bb{S}_{\mu}V\oo \mc{O}_\PP(-d) \lra \mc{E} \lra 0\]
    where $\mc{E}$ has a filtration with composition factors of the form $\mc{P}(\Omega)$ where $\mc{P}$ is a polynomial functor (in fact, a skew Schur functor) of degree $<d$. It follows by induction on $d$ that $H^j(\PP,\mc{E})$ has trivial $\GL_n$-action for $j<n-1$, as well as for $j=n-1$ when $d\leq n-1$. Since $H^j\left(\PP,\bb{S}_{\mu}V\oo \mc{O}_\PP(-d)\right)=0$ for $j<n-1$ we get
    \begin{equation}\label{eq:iso-HjSmu-Hj-1E}
    H^j\left(\PP,\bb{S}_{\mu}\Omega\right) \simeq H^{j-1}\left(\PP,\mc{E}\right)\quad \text{ for }j<n-1,
    \end{equation}
    hence Theorem~\ref{thm:coh-omega}(1) holds for $\bb{S}_{\mu}$ by induction on $d$. If $d\leq n-1$ then $H^{n-1}\left(\PP,\bb{S}_{\mu}V\oo \mc{O}_\PP(-d)\right)=0$, hence \eqref{eq:iso-HjSmu-Hj-1E} also holds for $j=n-1$, proving Theorem~\ref{thm:coh-omega}(2) for $\bb{S}_{\mu}$. If $e<0$ then $H^j\left(\PP,\bb{S}_{\mu}V\oo \mc{O}_\PP(e-d)\right)=0$ for $j<n-1$, hence
    \begin{equation}\label{eq:vanishing-HjSmu-Hj-1E}
    H^j\left(\PP,\bb{S}_{\mu}\Omega\oo \mc{O}_\PP(e)\right) \simeq H^{j-1}\left(\PP,\mc{E}\oo \mc{O}_\PP(e)\right)=0\quad \text{ for }j<n-1,
    \end{equation}    
    where the vanishing of cohomology for $\mc{E}\oo \mc{O}_\PP(e)$ follows from that of its composition factors (which is true by induction on $d$), proving Theorem~\ref{thm:coh-omega}(3) for $\mc{P}=\bb{S}_{\mu}$. Finally, if $-n+1\leq e-d\leq -1$, then $H^{n-1}\left(\PP,\bb{S}_{\mu}V\oo \mc{O}_\PP(e-d)\right)=0$, hence \eqref{eq:vanishing-HjSmu-Hj-1E} also holds for $j=n-1$, proving Theorem~\ref{thm:coh-omega}(4) for $\bb{S}_{\mu}$.
        
    To prove Theorem~\ref{thm:coh-omega} for $\bb{L}_{\mu}\Omega$, we start with the smallest partition $\mu$ of $d$ in the dominance order, namely $\mu=(1^d)$, in which case $\bb{L}_{\mu}=\bw^d$ and $\bb{L}_{\mu}\Omega=\Omega^d$. Since $\Omega^d=0$ for $d\geq n$, we may further assume $d\leq n-1$. The conclusions (1)-(4) follow readily by induction from the exact sequence
    \[ 0 \lra \Omega^d \lra \bw^d V\oo \mc{O}_{\PP}(-d) \lra \Omega^{d-1} \lra 0.\]
    Suppose now that $\mu_1\geq 2$ and consider the short exact sequence
    \[ 0 \lra \bb{L}_{\mu}\Omega \lra \bb{S}_{\mu}\Omega \lra \mc{E}' \lra 0\]
    where $\mc{E}'$ has a filtration with composition factors $\bb{L}_{\ll}\Omega$, where $\ll<\mu$ in the dominance order. We have shown that Theorem~\ref{thm:coh-omega} holds for $\bb{S}_{\mu}\Omega$, and we know by induction that it holds for (each of the composition factors of) $\mc{E}'$, hence using the long exact sequence in cohomology it also holds for $\bb{L}_{\mu}\Omega$, concluding our proof.
\end{proof}

\subsection{Stabilization of cohomology} The goal of this section is to give a proof of Theorem~\ref{thm:stab-coh-omega}. We consider a hyperplane embedding $\iota:\PP^{n-1}\lra\PP^n$, and the associated short exact sequence
\begin{equation}\label{eq:ses-hyper-in-Pn} 0 \lra \mc{O}_{\PP^n}(-1) \lra \mc{O}_{\PP^n} \lra \iota_*\mc{O}_{\PP^{n-1}} \lra 0,
\end{equation}
as well as the relative cotangent sequence
\begin{equation}\label{eq:cot-seq-hyper-Pn} 0 \lra \mc{O}_{\PP^{n-1}}(-1) \lra \iota^*\Omega_{\PP^n} \lra \Omega_{\PP^{n-1}} \lra 0,
\end{equation}
which is split since $H^1(\PP^{n-1},\Omega_{\PP^{n-1}}^{\vee}(-1))=0$ (which follows by dualizing \eqref{eq:Euler-sequence}, twisting by $\mc{O}_{\PP^{n-1}}(-1)$, and considering the long exact sequence in cohomology).

\begin{proof}[Proof of Theorem~\ref{thm:stab-coh-omega}]
    We first verify (1) and (2). If we tensor \eqref{eq:ses-hyper-in-Pn} by $\mc{P}(\Omega_{\PP^n})$ and use the functorial property $\iota^*\mc{P}(\Omega_{\PP^n})=\mc{P}(\iota^*\Omega_{\PP^n})$ and the projection formula, then we get a short exact sequence
    \[0\lra \mc{P}\left(\Omega_{\PP^n}\right)\oo\mc{O}_{\PP^n}(-1) \lra \mc{P}\left(\Omega_{\PP^n}\right) \lra \iota_*\mc{P}\left(\iota^*\Omega_{\PP^n}\right) \lra 0 \]
    By Theorem~\ref{thm:coh-omega}(3), (4) with $e=-1$ (and $n-1$ replaced by $n$), we have that 
    \[H^j\left(\PP^n,\mc{P}\left(\Omega_{\PP^n}\right)\oo\mc{O}_{\PP^n}(-1)\right) = H^{j+1}\left(\PP^n,\mc{P}\left(\Omega_{\PP^n}\right)\oo\mc{O}_{\PP^n}(-1)\right) = 0\quad\text{ if }j<n-1,\text{ or if }d\leq n-1.\]
    The long exact sequence in cohomology induces then isomorphisms
    \begin{equation}\label{eq:cohPOm=cohistarPOm}
    H^j\left(\PP^n,\mc{P}(\Omega_{\PP^n})\right) \simeq H^j\left(\PP^{n-1},\mc{P}\left(\iota^*\Omega_{\PP^{n}}\right)\right)\quad\text{ if }j<n-1,\text{ or if }d\leq n-1.
    \end{equation}
    Using the split sequence \eqref{eq:cot-seq-hyper-Pn} and \cite{draisma}*{Lemma~14}, it follows that there exist polynomial functors $\mc{P}^{(i)}$ of degree $d-i$ such that
    \[ \mc{P}\left(\iota^*\Omega_{\PP^{n}}\right) = \mc{P}\left(\Omega_{\PP^{n-1}}\right) \oplus \left(\bigoplus_{i=1}^d \mc{P}^{(i)}\left(\Omega_{\PP^{n-1}}\right)\oo\mc{O}_{\PP^{n-1}}(-i)\right).\]
    For each $i=1,\cdots,d$, it follows from Theorem~\ref{thm:coh-omega}(3), (4) that 
    \[H^j\left(\PP^{n-1},\mc{P}^{(i)}\left(\Omega_{\PP^{n-1}}\right)\oo\mc{O}_{\PP^{n-1}}(-i)\right)=0\quad\text{ if }j<n-1,\text{ or if }d\leq n-1.\]
    Combining this with \eqref{eq:cohPOm=cohistarPOm}, it follows that
    \[H^j\left(\PP^n,\mc{P}(\Omega_{\PP^n})\right) \simeq H^j\left(\PP^{n-1},\mc{P}(\Omega_{\PP^{n-1}})\right)\quad\text{ if }j<n-1,\text{ or if }d\leq n-1,\]
    proving conclusions (1) and (2) of our theorem.
    
    To prove (3), we note that part (2) implies  $H^j\left(\PP^{n-1},\mc{P}(\Omega)\right)=H^j\left(\PP^{d},\mc{P}(\Omega)\right)$ for $n-1\geq d$. By Grothendieck vanishing, $H^j\left(\PP^{d},\mc{P}(\Omega)\right)=0$ for $j>d=\dim(\PP^d)$, hence (3) holds when $n-1\geq d$. If $n-1<d$ then we have $j>d>n-1=\dim(\PP^{n-1})$ and we conclude as before that $H^j\left(\PP^{n-1},\mc{P}(\Omega)\right)=0$.
\end{proof}

\subsection{K\"unneth Formula}\label{subsec:Kunneth}

The goal of this section is to prove Theorem~\ref{thm:Kunneth-coh-omega}, for which the main ingredient is Beilinson's resolution of the diagonal \cite{beilinson}. We write $\PP=\PP^{n-1}$, $V=H^0(\PP,\mc{O}_{\PP}(1))\simeq\kk^n$ and consider the tautological exact sequence (the twist of \eqref{eq:Euler-sequence} by $\mc{O}_{\PP}(1)$)
\[ 0 \lra \mc{R} \lra V \oo \mc{O}_{\PP} \lra \mc{O}_{\PP}(1) \lra 0\]
We write $\pi_1,\pi_2$ for the two projections $\PP\times\PP\lra\PP$, and write $\mc{E}\boxtimes\mc{F} = \pi_1^*\mc{E} \oo \pi_2^*\mc{F}$ for coherent sheaves $\mc{E},\mc{F}$ on $\PP$, noting that K\"unneth's formula holds for $\mc{E}\boxtimes\mc{F}$ \cite{stacks-project}*{\href{https://stacks.math.columbia.edu/tag/0BED}{Lemma 0BED}}. If we let $\Delta\subset\PP\times\PP$ denote the diagonal, then Beilinson constructs a resolution
\[ \mc{B}_{\bullet} \lra \mc{O}_{\Delta} \lra 0\]
where $\mc{B}_i = \bw^i \mc{R} \boxtimes \mc{O}_{\PP}(-i)$ for $i=0,\cdots,n-1$.

\begin{proof}[Proof of Theorem~\ref{thm:Kunneth-coh-omega}]
    If we use the identification of $\Delta$ with $\PP$, then we can realize the sheaf $\mc{P}(\Omega)\oo\mc{P}'(\Omega)$ as the restriction $\left(\mc{P}(\Omega)\boxtimes\mc{P}'(\Omega)\right)_{|_{\Delta}}$. We can then compute its cohomology as the hypercohomology of its resolution $\mc{B}_{\bullet} \oo \left(\mc{P}(\Omega)\boxtimes\mc{P}'(\Omega)\right)$. Since $\mc{B}_0 \oo \left(\mc{P}(\Omega)\boxtimes\mc{P}'(\Omega)\right) = \left(\mc{P}(\Omega)\boxtimes\mc{P}'(\Omega)\right)$ satisfies K\"unneth's formula, it suffices to prove that
    \[ H^j\left(\PP\times\PP,\mc{B}_i \oo \left(\mc{P}(\Omega)\boxtimes\mc{P}'(\Omega)\right)\right) = 0\text{ for }i>0\text{ and all }j. \]
    Applying again K\"unneth's formula, it suffices to verify that for each $i>0$ either the cohomology of $\mc{P}(\Omega)\oo\bw^i\mc{R}$ or that of $\mc{P}'(\Omega)\oo\mc{O}_{\PP}(-i)$ is identically zero. If $d'-n+1\leq -i$ then we can apply Theorem~\ref{thm:coh-omega}(3),(4) to get the desired vanishing for $\mc{P}'(\Omega)\oo\mc{O}_{\PP}(-i)$.

    To conclude the proof, we may then assume that $i\geq n-d'\geq d+1$ and prove that $\mc{P}(\Omega)\oo\bw^i\mc{R}$ has vanishing cohomology. To that end, we use the Koszul resolution
    \[ \mc{K}_{\bullet}\lra \bw^i\mc{R} \lra 0,\quad\text{ where }\quad\mc{K}_j = \bw^{i+1+j}V \oo \mc{O}_{\PP}(-j-1) \quad\text{ for }j=0,\cdots,n-i-1.\]
    Since $d-n+1\leq -n+i\leq -j-1\leq -1$ for $j$ as above, we conclude again by Theorem~\ref{thm:coh-omega}(3),(4) that $\mc{P}(\Omega)\oo\mc{K}_j$ has vanishing cohomology, so the same is true about $\mc{P}(\Omega)\oo\bw^i\mc{R}$, concluding our proof.
\end{proof}

\subsection{Truncated powers}\label{subsec:trunc-pows-Doty}

The goal of this section is to describe the (stable) cohomology for the truncated symmetric powers of $\Omega$.

\begin{proof}[Proof of \eqref{eq:stable-coh-trunc-pows}] For every short exact sequence
\[ 0 \lra \mc{L} \lra \mc{E} \lra \mc{F} \lra 0\]
where $\mc{E},\mc{F}$ are vector bundles and $\mc{L}$ is a line bundle, we have a resolution $\mc{G}_{\bullet}$ of $T_p\Sym^d\mc{F}$ given by
\[ \mc{G}_{2i} = \mc{L}^{pi}\oo T_p\Sym^{d-pi}\mc{E},\quad \mc{G}_{2i+1}=\mc{L}^{pi+1}\oo T_p\Sym^{d-pi-1}\mc{E},\quad\text{ for }i\geq 0.\]
Indeed, if $\rank(\mc{E})=e+1$ then on an affine chart $\op{Spec}(A)$ we can view $\mc{E}$ as a free $A$-module spanned by $x_0,\cdots,x_e$, $\mc{L} = A\cdot x_0 = A(-1)$ as a rank one free module generated in degree $1$, and $\mc{F}$ as a free $A$-module spanned by $x_1,\cdots,x_e$. We can then identify the truncated symmetric algebras of $\mc{E}$, $\mc{F}$ as follows: if we let
\[ B = A[x_0,\cdots,x_e]/\langle x_0^p,\cdots,x_e^p\rangle\]
then we get
\[ T_p\Sym(\mc{E}) = B\quad\text{ and }\quad T_p\Sym(\mc{F}) = B/\langle x_0\rangle=A[x_1,\cdots,x_e]/\langle x_1^p,\cdots,x_e^p\rangle.\]
Writing $B\cdot x_0^j = B(-j)$ for the rank $1$ free $B$-module generated in degree $j$, the minimal graded free resolution of $B/\langle x_0\rangle$ over $B$ is given by
\[\cdots \lra B(-pi-1) \overset{\cdot x_0}{\lra} B(-pi) \overset{\cdot x_0^{p-1}}{\lra} \cdots \overset{\cdot x_0}{\lra} B(-p) \overset{\cdot x_0^{p-1}}{\lra} B(-1) \overset{\cdot x_0}{\lra} B \lra B/\langle x_0\rangle \lra 0\]
This resolution globalizes and yields by restricting to degree $d$ the complex $\mc{G}_{\bullet}$.

Since the truncated powers $T_p\Sym^d$ are simple functors, they are invariant under the duality functor ${}^{\#}$, so
\[ T_p\Sym^d\left(\mc{E}^{\vee}\right) = \left(T_p\Sym^d\mc{E}\right)^{\vee}.\]
We write $V=\kk^n$, $n\gg 0$, and apply the above construction to the dual of the Euler sequence on $\PP=\PP^{n-1}$:
\[ 0 \lra \mc{O}_{\PP} \lra V^{\vee}\oo\mc{O}_{\PP}(1) \lra \Omega^{\vee} \lra 0.\]
Dualizing the resulting complex $\mc{G}_{\bullet}$ we get a right resolution $\mc{G}^{\vee}_{\bullet}$ of $T_p\Sym^d\Omega$ given by
\[ \mc{G}_{2i}^{\vee} = (T_p\Sym^{d-pi}V)\oo\mc{O}_{\PP}(pi-d),\quad \mc{G}^{\vee}_{2i+1}=(T_p\Sym^{d-pi-1}V)\oo\mc{O}_{\PP}(pi+1-d),\quad\text{ for }i\geq 0.\]
Since the sheaves $\mc{G}_{i}^{\vee}$ are isomorphic to direct sums of line bundles $\mc{O}_{\PP}(-j)$, $-d\leq j\leq 0$, and $n\gg 0$, it follows that the only ones that have non-zero cohomology occur for $j=0$. In particular, if $d\not\equiv 0,1\ (\op{mod }p)$ then $pi-d$ and $pi+1-d$ are never $0$, hence $T_p\Sym^d\Omega$ has no cohomology. If $d\equiv 0,1\ (\op{mod }p)$ then $\mc{G}_i^{\vee}$ has non-vanishing cohomology precisely when $i=|d|_p$, since in that case $\mc{G}_{|d|_p}^{\vee}=\mc{O}_{\PP}$. The hypercohomology spectral sequence associated to the resolution $\mc{G}^{\vee}_{\bullet}$ of $T_p\Sym^d\Omega$ now implies \eqref{eq:stable-coh-trunc-pows}, as desired.
\end{proof} 

\subsection{Cohomology of Weyl functors}

We next consider the case when $\mc{P}=\bb{W}_{\mu}$ is a Weyl functor associated to a partition $\mu$ of $d$. Identifying $\Omega = \ker(\phi)$ in \eqref{eq:Euler-sequence} and using the fact that $\phi$ is locally a split surjection, the dual version of \cite{ABW}*{Corollary~V.1.15} yields a right resolution of $\bb{W}_{\mu}\Omega$ given by the complex $\mc{G}^{\bullet}=\bb{W}_{\mu}\left(\mc{F}^0\overset{\phi}{\lra}\mc{F}^1\right)$, where $\mc{F}^0 = V \oo \mc{O}_\PP(-1)$ and $\mc{F}^1 = \mc{O}_\PP$. More precisely, we have
\begin{equation}\label{eq:def-Gi}
 \mc{G}^i = \bb{W}_{\mu/(1^i)}V \oo \mc{O}_{\PP}(-|\mu|+i)\quad \text{ for }i=0,\cdots,\mu'_1,
\end{equation}
and in particular each $\mc{G}^i$ is isomorphic to a direct sum of line bundles.

\begin{theorem}\label{thm:Weyl-Omega}
    If $\mu$ is a partition with $\mu_1\geq 2$ and $\mc{P}$ is a polynomial functor of degree $d$, then
    \begin{equation}\label{eq:Hj-WOmega-vanish-small-j} H^j\left(\PP,\mc{P}(\Omega)\oo\bb{W}_{\mu}\Omega\right) = 0\text{ for all }j<n-1.
    \end{equation}
    Moreover, if $d+|\mu|\leq n-1$ then we also have $H^{n-1}\left(\PP,\mc{P}(\Omega)\oo\bb{W}_{\mu}\Omega\right) = 0$.
\end{theorem}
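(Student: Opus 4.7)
The plan is to exploit the right resolution $\mc{G}^\bullet$ of $\bb{W}_\mu\Omega$ already constructed in~\eqref{eq:def-Gi} and reduce the entire statement to a straightforward application of Theorem~\ref{thm:coh-omega}(3),(4) term by term, via the hypercohomology spectral sequence.

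First, I would tensor the resolution $\mc{G}^\bullet$ with $\mc{P}(\Omega)$. Since each $\bb{W}_{\mu/(1^i)}V$ is just a finite-dimensional vector space, the resulting terms are direct sums of copies of the twisted sheaves $\mc{P}(\Omega)(-|\mu|+i)$ for $i=0,\dots,\mu'_1$. The hypercohomology spectral sequence takes the form
\[ E_1^{p,q} = H^q\bigl(\PP,\mc{P}(\Omega)\oo\mc{G}^p\bigr) \Rightarrow H^{p+q}\bigl(\PP,\mc{P}(\Omega)\oo\bb{W}_\mu\Omega\bigr),\]
so both assertions reduce to showing that the indicated $E_1$ terms vanish.

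The next step is the key numerical observation: since $\mu_1\geq 2$, one has $|\mu|\geq\mu_1+(\mu'_1-1)\geq\mu'_1+1$, so that for every $0\leq i\leq\mu'_1$ the twist $e:=-|\mu|+i$ is a strictly negative integer ($e\leq-1$). This ensures all the hypotheses on the twist required by Theorem~\ref{thm:coh-omega}(3) are met. Consequently $E_1^{p,q}=0$ whenever $q<n-1$, and because $p\geq 0$ forces $q\leq p+q$, all entries with $p+q<n-1$ vanish. The spectral sequence then yields the first claim $H^j=0$ for $j<n-1$.

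For the ``moreover'' statement, the remaining contribution on the diagonal $p+q=n-1$ is $E_1^{p,n-1}$. Applying Theorem~\ref{thm:coh-omega}(4), vanishing of $H^{n-1}\bigl(\PP,\mc{P}(\Omega)(e)\bigr)$ requires $d-n+1\leq e\leq -1$. We already verified the upper bound; the lower bound $d-n+1\leq -|\mu|+i$ for all $0\leq i\leq\mu'_1$ is equivalent to the worst case $i=0$, giving precisely the hypothesis $d+|\mu|\leq n-1$. Thus under this assumption all $E_1^{p,n-1}$ vanish as well, the spectral sequence collapses to zero in total degree $n-1$, and the second claim follows. The only mildly delicate point is the numerical inequality $|\mu|>\mu'_1$, which is exactly where the hypothesis $\mu_1\geq 2$ is used; there is no other serious obstacle.
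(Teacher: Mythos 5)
Your proposal is correct and follows essentially the same route as the paper: tensor the right resolution $\mc{G}^\bullet$ from \eqref{eq:def-Gi} with $\mc{P}(\Omega)$, run the hypercohomology spectral sequence, and kill the $E_1$ terms by Theorem~\ref{thm:coh-omega}(3),(4), with the hypothesis $\mu_1\geq 2$ entering exactly through the inequality $|\mu|\geq\mu'_1+1$ that makes every twist $-|\mu|+i$ negative. The numerical verification for the ``moreover'' part (worst case $i=0$ giving $d+|\mu|\leq n-1$) also matches the paper's argument.
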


\begin{proof} Tensoring the resolution \eqref{eq:def-Gi} with $\mc{P}(\Omega)$, we get a hypercohomology spectral sequence
\[ E_2^{i,j} = H^j\left(\PP,\mc{P}(\Omega)\oo\mc{G}^i\right) \Longrightarrow H^{i+j}\left(\PP,\mc{P}(\Omega)\oo\bb{W}_{\mu}\Omega\right).\]
Since $\mu_1\geq 2$, we have $-|\mu|+i<0$ for all $i=0,\cdots,\mu'_1$, and therefore the sheaves $\mc{P}(\Omega)\oo\mc{G}^i$ may only have cohomology in degree $j=n-1$ by Theorem~\ref{thm:coh-omega}(3), proving \eqref{eq:Hj-WOmega-vanish-small-j}. If in addition $d+|\mu|\leq n-1$, then
\[ d-n+1 \leq -|\mu|+i \leq -1 \quad\text{ for }i=0,\cdots,\mu'_1,\]
and Theorem~\ref{thm:coh-omega}(4) implies $E_2^{i,j}=0$ for all $i,j$. We conclude that in this case the cohomology of $\mc{P}(\Omega)\oo\bb{W}_{\mu}\Omega$ is identically zero, as desired.
\end{proof}

If we consider instead partitions with $\mu_1=1$ then $\mu=(1^m)$ for some $m$, and $\bb{W}_{\mu}\Omega=\Omega^m$. Using \eqref{eq:Hd-of-Omegad}, it is then clear that Theorem~\ref{thm:Weyl-Omega} fails already when $\mc{P}$ is the trivial functor. In fact, \eqref{eq:Hd-of-Omegad} together with K\"unneth's formula implies that tensoring with $\Omega^m$ induces multiplication by $t^m$ on the stable cohomology polynomials. We prove the following more precise statement.

\begin{theorem}\label{thm:tens-Omega=shift-coh}
 If $\mc{P}$ is a polynomial functor of degree $d$ then there is a natural identification
 \[ H^{j+m}\left(\PP,\mc{P}(\Omega)\oo\Omega^m\right) = H^j\left(\PP,\mc{P}(\Omega)\right)\quad\text{for }j+m<n-1.\]
 Moreover, the above identification also holds when $j+m=n-1$ provided $d\leq j$. 
\end{theorem}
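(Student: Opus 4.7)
The plan is to reduce $\mc{P}(\Omega)\oo\Omega^m$ to $\mc{P}(\Omega)$ one exterior power at a time by iterating a short exact sequence extracted from the Euler sequence \eqref{eq:Euler-sequence}. Concretely, for each $l\geq 1$, taking $\bw^l$ of the Euler sequence (and using $\bw^k\mc{O}_{\PP}=0$ for $k\geq 2$) produces
\[ 0 \lra \Omega^l \lra \bw^l V \oo \mc{O}_{\PP}(-l) \lra \Omega^{l-1} \lra 0,\]
and tensoring with $\mc{P}(\Omega)$ yields a long exact sequence whose connecting homomorphisms
\[ H^{k}\left(\PP,\mc{P}(\Omega)\oo\Omega^{l-1}\right) \lra H^{k+1}\left(\PP,\mc{P}(\Omega)\oo\Omega^l\right) \]
are sandwiched between the cohomology groups $H^{k}\left(\PP,\mc{P}(\Omega)(-l)\right)$ and $H^{k+1}\left(\PP,\mc{P}(\Omega)(-l)\right)$ of the middle term (up to the factor $\bw^l V$).

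The next step is to observe that Theorem~\ref{thm:coh-omega}(3) forces $H^{k}\left(\PP,\mc{P}(\Omega)(-l)\right)=0$ for all $k<n-1$ (since $-l<0$), so the connecting map is an isomorphism as soon as both $k$ and $k+1$ are at most $n-2$, that is, whenever $k+1\leq n-2$. For the borderline case $k+1=n-1$, one invokes Theorem~\ref{thm:coh-omega}(4), which additionally provides $H^{n-1}\left(\PP,\mc{P}(\Omega)(-l)\right)=0$ when $d-n+1\leq -l$, i.e., $d+l\leq n-1$.

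Finally, I would iterate the isomorphism over $l=m,m-1,\cdots,1$ to obtain
\[ H^{j+m}\left(\PP,\mc{P}(\Omega)\oo\Omega^m\right) \simeq H^{j+m-1}\left(\PP,\mc{P}(\Omega)\oo\Omega^{m-1}\right) \simeq \cdots \simeq H^j\left(\PP,\mc{P}(\Omega)\right). \]
If $j+m\leq n-2$ then every intermediate step has $j+l\leq n-2$ and the basic vanishing from part~(3) suffices. If instead $j+m=n-1$, then only the first step (with $l=m$) is borderline and requires the additional hypothesis $d\leq n-1-m=j$ via part~(4); the subsequent steps fall strictly below the $n-1$ threshold and go through for free. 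There is no genuine obstacle here beyond careful bookkeeping of cohomological degrees across the iteration; the argument is essentially mechanical once the two vanishing statements of Theorem~\ref{thm:coh-omega} are in hand.
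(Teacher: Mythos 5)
Your proposal is correct and follows essentially the same route as the paper: the paper also tensors the exact sequence $0\to\Omega^m\to\bw^m V\oo\mc{O}_{\PP}(-m)\to\Omega^{m-1}\to 0$ with $\mc{P}(\Omega)$ and shows the connecting map is an isomorphism using Theorem~\ref{thm:coh-omega}(3), with part~(4) handling surjectivity in the borderline degree $j+m=n-1$ under $d\leq j$, arguing by induction on $m$ rather than your explicit unrolled iteration. Your degree bookkeeping (only the top step $l=m$ is borderline) matches the paper's inductive step exactly.
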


\begin{proof}
   We argue by induction on $m$, noting that if $m=0$ then there is nothing to prove. If $m>0$ then \eqref{eq:Euler-sequence} induces a short exact sequence
   \[ 0 \lra \Omega^m \lra \bw^m V \oo \mc{O}_{\PP}(-m) \lra \Omega^{m-1} \lra 0.\]
   We tensor with $\mc{P}(\Omega)$ and consider the long exact sequence in cohomology
   \[\begin{aligned}\cdots\lra &\bw^m V \oo H^{i-1}\left(\PP,\mc{P}(\Omega)\oo \mc{O}_{\PP}(-m)\right) \lra H^{i-1}\left(\PP,\mc{P}(\Omega)\oo\Omega^{m-1}\right) \overset{\delta}{\lra} H^i\left(\PP,\mc{P}(\Omega)\oo\Omega^m\right) \lra \\
   \lra &\bw^m V \oo H^i\left(\PP,\mc{P}(\Omega)\oo \mc{O}_{\PP}(-m)\right) \lra \cdots
   \end{aligned}\]
   If $i=j+m<n-1$ then it follows from Theorem~\ref{thm:coh-omega}(3) that 
   \[H^{i-1}\left(\PP,\mc{P}(\Omega)\oo \mc{O}_{\PP}(-m)\right)=H^{i}\left(\PP,\mc{P}(\Omega)\oo \mc{O}_{\PP}(-m)\right)=0,\] 
 hence the connecting map $\delta$ is an isomorphism. The desired conclusion now follows by induction on $m$. If $i=j+m=n-1$ then we still get that $\delta$ is injective using Theorem~\ref{thm:coh-omega}(3). If $d\leq j$ then it follows that $d-n+1\leq j-n+1=-m\leq -1$, hence Theorem~\ref{thm:coh-omega}(4) implies $H^i\left(\PP,\mc{P}(\Omega)\oo \mc{O}_{\PP}(-m)\right)=0$ and proves that $\delta$ is also surjective.
\end{proof}

\begin{remark}\label{rem:case-d<j}
The last assertion in Theorem~\ref{thm:tens-Omega=shift-coh} is interesting only when $d=j$: if $d<j$ then we know already by Theorem~\ref{thm:stab-coh-omega}(3) that both $H^{j+m}\left(\PP,\mc{P}(\Omega)\oo\Omega^m\right)$ and $H^j\left(\PP,\mc{P}(\Omega)\right)$ vanish.
\end{remark}

Unlike the examples from Section~\ref{subsec:trunc-pows-Doty} where the cohomology of $T_p\Sym^d\Omega$ was non-zero in at most one degree, this is no longer the case for $\bb{L}_{\ll}\Omega$ when $\ll$ is a general partition, as explained next.

\begin{example}\label{ex:simpleWithMoreCohom}
    In this example, we show that if $\chr(\kk)=2$ then $\bb{L}_{(2,2,1,1)} \Omega$ satisfies
$$H^4_{st}(\bb{L}_{(2,2,1,1)} \Omega ) = H^5_{st} (\bb{L}_{(2,2,1,1)} \Omega) = \kk,$$
and all other cohomology vanishes. To see this, we can use the GAP package \cite{doty-gap} to find that there is a short exact sequence
$$0 \to \bb{L}_{(1^6)} (\Omega) \to \bbw_{(2,1^4)} (\Omega) \to \bb{L}_{(2,1^4)} (\Omega) \to 0.$$
Since $\bb{L}_{(1^6)} (\Omega) = \Omega^6$ and $\bbw_{(2,1^4)} (\Omega)$ has vanishing stable cohomology by Theorem~\ref{thm:Weyl-Omega}, we get $h_{\bb{L}_{(2,1^4)}} (t) = t^5$. We have moreover short exact sequences
$$0 \to \mc{P}(\Omega) \to \bbw_{(2,2,1,1)} (\Omega) \to \bb{L}_{(2,2,1,1)} (\Omega) \to 0,$$
$$0 \to \bb{L}_{(2,1^4)} (\Omega)  \to \mc{P}(\Omega) \to \bb{L}_{(1^6)} (\Omega) \to 0,$$
for some polynomial functor $\mc{P}$. From the latter it follows that $h_{\mc{P}(\Omega)} (t) = t^5 + t^6$ and hence by the former (combined with the fact that $\bbw_{(2,2,1,1)} (\Omega)$ has vanishing stable cohomology) we conclude that
$$h_{\bb{L}_{(2,2,1,1)}} (t) = t^4 + t^5.$$
\end{example}

The following will be crucial in the calculation of stable cohomology for Schur functors associated to hook shapes in Section~\ref{sec:arithmetic}.

\begin{corollary}\label{cor:multOmega-iso-cohom}
    Suppose that $\mc{P}$ is a polynomial functor of degree $d$. 
    \begin{itemize}
    \item[(i)] The exterior multiplication $\Omega^a\oo\Omega^b\lra\Omega^{a+b}$ induces an isomorphism in cohomology
  \[H^j\left(\PP,\mc{P}(\Omega)\oo\Omega^a \oo \Omega^b \right) = H^j\left(\PP,\mc{P}(\Omega)\oo\Omega^{a+b} \right)\text{ for }j<n-1.\]
  If $d\leq n-1-a-b$ then the above isomorphism also holds for $j=n-1$.
  \item[(ii)] Under the identification in (i), the comultiplication $\Delta:\Omega^{a+b}\lra\Omega^a\oo\Omega^b$ induces a map in cohomology
  \[ H^j(\PP,\Delta):H^j\left(\PP,\mc{P}(\Omega)\oo\Omega^{a+b} \right)\lra H^j\left(\PP,\mc{P}(\Omega)\oo\Omega^a \oo \Omega^b \right) = H^j\left(\PP,\mc{P}(\Omega)\oo\Omega^{a+b} \right)\]
  which is multiplication by ${a+b\choose b}$.
\end{itemize}
\end{corollary}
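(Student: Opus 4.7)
My plan is to derive both claims from a single short exact sequence of polynomial functors, combined with Theorem~\ref{thm:Weyl-Omega} and a Hopf-algebra identity for exterior powers. For part (i), I would consider the surjection $\mu:\bw^a\oo\bw^b\onto\bw^{a+b}$ of strict polynomial functors given by exterior multiplication, and let $K$ denote its kernel. By the Pieri rule for Weyl modules (valid integrally), $\bw^a\oo\bw^b=\bb{W}_{(1^a)}\oo\bb{W}_{(1^b)}$ admits a Weyl filtration whose subquotients are the modules $\bb{W}_{(2^k,1^{a+b-2k})}$ for $0\leq k\leq\min(a,b)$, each appearing once. Since $(1^{a+b})$ is the least dominant among these partitions, $\bb{W}_{(1^{a+b})}=\bw^{a+b}$ sits at the top of the Weyl filtration, realizing the map $\mu$, and $K$ therefore inherits a filtration whose subquotients are Weyl functors $\bb{W}_{\ll}$ with $\ll_1=2$.

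Evaluating on the cotangent sheaf then yields a short exact sequence of locally free sheaves
\[0\lra K(\Omega)\lra\Omega^a\oo\Omega^b\lra\Omega^{a+b}\lra 0,\]
and tensoring with $\mc{P}(\Omega)$ preserves exactness. Theorem~\ref{thm:Weyl-Omega} applied to each subquotient of $K(\Omega)$ then yields $H^j(\PP,\mc{P}(\Omega)\oo K(\Omega))=0$ for $j<n-1$, and the long exact sequence in cohomology gives the claimed isomorphism in part (i). Under the stronger hypothesis $d\leq n-1-a-b$, every subquotient $\bb{W}_{\ll}$ satisfies $d+|\ll|=d+a+b\leq n-1$, so Theorem~\ref{thm:Weyl-Omega} additionally forces vanishing at $j=n-1$ and the isomorphism extends to that degree.

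For part (ii), I would invoke the Hopf-algebra identity $\mu\circ\Delta=\binom{a+b}{b}\cdot\id_{\bw^{a+b}}$, which holds at the level of strict polynomial functors over $\bb{Z}$: a direct computation on $v_1\w\cdots\w v_{a+b}$ shows that the comultiplication produces a signed sum over $(a,b)$-shuffles, and the subsequent multiplication re-orders each term back with the matching sign, yielding $\binom{a+b}{b}$ copies of the original element. Tensoring this identity with $\mc{P}(\Omega)$ and passing to cohomology gives $H^j(\mu)\circ H^j(\Delta)=\binom{a+b}{b}\cdot\id$; since $H^j(\mu)$ is the identifying isomorphism from part (i), this forces $H^j(\Delta)$ to act as multiplication by $\binom{a+b}{b}$ under that identification. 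The main subtlety I anticipate is establishing the Weyl filtration of $K$ in positive characteristic with $\bw^{a+b}$ as the top quotient; this should follow from the standard Pieri rule for Weyl functors, provided one adheres to the convention that the least dominant weight sits as the top quotient of a Weyl filtration.
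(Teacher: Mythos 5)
Your strategy is essentially the one the paper uses: in the paper the kernel of the multiplication $\Omega^a\oo\Omega^b\lra\Omega^{a+b}$ is written as the skew Weyl functor $\bb{W}_{\ll/(1^{b-1})}\Omega$ with $\ll=(a+b-1,b)'$, which is filtered by $\bb{W}_{\mu}\Omega$ with $\mu_1=2$ — the same two-column Weyl factors you obtain from the Pieri rule — and part (ii) is likewise deduced from the identity $\mu\circ\Delta={a+b\choose b}\cdot\op{id}$ on $\bw^{a+b}$. So your description of the kernel and your treatment of (ii) are fine.

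There is, however, a genuine gap in your part (i) at the boundary degree $j=n-2$. The vanishing $H^j\left(\PP,\mc{P}(\Omega)\oo K(\Omega)\right)=0$ for $j<n-1$ gives injectivity of $H^j\left(\PP,\mc{P}(\Omega)\oo\Omega^a\oo\Omega^b\right)\lra H^j\left(\PP,\mc{P}(\Omega)\oo\Omega^{a+b}\right)$ in all degrees $j<n-1$, but surjectivity in degree $j$ requires the vanishing of $H^{j+1}\left(\PP,\mc{P}(\Omega)\oo K(\Omega)\right)$, so your long exact sequence argument only yields the isomorphism for $j\leq n-3$ unconditionally; at $j=n-2$ it would need $H^{n-1}\left(\PP,\mc{P}(\Omega)\oo K(\Omega)\right)=0$, which Theorem~\ref{thm:Weyl-Omega} guarantees only under the extra hypothesis $d+a+b\leq n-1$, whereas the statement claims the isomorphism for every $j<n-1$ with no such constraint. (Your conditional case $j=n-1$ is fine, since surjectivity there is automatic from $H^n(\PP^{n-1},-)=0$.) The paper closes the $j=n-2$ gap with a dimension count: by Theorem~\ref{thm:tens-Omega=shift-coh}, both $H^j\left(\PP,\mc{P}(\Omega)\oo\Omega^a\oo\Omega^b\right)$ and $H^j\left(\PP,\mc{P}(\Omega)\oo\Omega^{a+b}\right)$ are abstractly isomorphic to the finite-dimensional space $H^{j-a-b}\left(\PP,\mc{P}(\Omega)\right)$ for $j<n-1$, so the injection coming from the long exact sequence is forced to be an isomorphism. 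You need this (or some substitute) to cover $j=n-2$; with that addition your proof is complete.
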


\begin{proof}
    To prove (i), suppose that $a\geq b$ and consider the short exact sequence
\[0 \lra \bb{W}_{\ll/(1^{b-1})}\Omega \lra \Omega^a \oo \Omega^b \lra \Omega^{a+b} \lra 0,\]
where $\ll = (a+b-1,b)'$. Tensoring with $\mc{P}(\Omega)$ and using the fact that $\bb{W}_{\ll/(1^{b-1})}\Omega$ has a filtration with composition factors of the form $\bb{W}_{\mu}\Omega$ with $\mu_1=2$, it follows from Theorem~\ref{thm:Weyl-Omega} and the long exact sequence in cohomology that the induced map
\begin{equation}\label{eq:ind-map-Hj-PO-oo-Omegas}
H^j\left(\PP,\mc{P}(\Omega)\oo\Omega^a \oo \Omega^b \right) \lra H^j\left(\PP,\mc{P}(\Omega)\oo\Omega^{a+b} \right)
\end{equation}
is injective for $j<n-1$. It follows from Theorem~\ref{thm:tens-Omega=shift-coh} that the above cohomology groups are abstractly isomorphic to $H^{j-a-b}\left(\PP,\mc{P}(\Omega)\right)$ (which is a finite dimensional vector space), hence \eqref{eq:ind-map-Hj-PO-oo-Omegas} is an isomorphism. Since $|\mu|=a+b$, the same argument applies for $j=n-1$ using the last assertions in Theorems~\ref{thm:Weyl-Omega},~\ref{thm:tens-Omega=shift-coh}.

Part (ii) follows from the fact that the composition of the comultiplication map $\Omega^{a+b}\lra\Omega^a\oo\Omega^b$ with the exterior multiplication $\Omega^a\oo\Omega^b\lra\Omega^{a+b}$ is scalar multiplication by ${a+b\choose b}$.
\end{proof}

\subsection{Invariance under Frobenius}\label{subsec:invariance-Frob} 

We begin by considering precomposition with Frobenius.

\begin{lemma}\label{lem:Frob-PcircFp}
    Suppose that $\mc{P}$ is a polynomial functor of degree $d$. If $n-1\geq dp$ then
    \[H^j\left(\PP^{n-1},(\mc{P}\circ F^p)(\Omega)\right) = H^j\left(\PP^{n-1},\mc{P}(\Omega)\right).\]
\end{lemma}

\begin{proof}
If we write $V=\kk^n$, $\bb{P}(V)=\op{Proj}(\Sym V)$, then the Frobenius morphism is naturally defined as
\[\varphi:\PP^{n-1}\simeq\bb{P}(V) \lra \bb{P}(F^pV)\simeq\PP^{n-1},\]
so that we can identify $F^p\Omega = \varphi^*(\Omega)$, and therefore
\[ \mc{P}(F^p(\Omega)) = \mc{P}(\varphi^*(\Omega)) = \varphi^*(\mc{P}(\Omega)).\]
Since $\varphi$ is a finite morphism, it follows from the projection formula that
    \[ H^j\left(\PP^{n-1},\varphi^*(\mc{P}(\Omega))\right) = H^j\left(\PP^{n-1},\mc{P}(\Omega) \oo \varphi_*(\mc{O}_{\PP^{n-1}})\right)\quad\text{for all }j.\]
    Using the decomposition (see for instance \cite{achinger}*{Theorem~2})
    \[\varphi_*(\mc{O}_{\PP^{n-1}}) = \mc{O}_{\PP^{n-1}} \oplus \bigoplus_{i=1}^{\lfloor n(p-1)/p\rfloor} \mc{O}_{\PP^{n-1}}(-i)^{\oplus a_i},\]
    it suffices to prove that $\mc{P}(\Omega) \oo \mc{O}_{\PP^{n-1}}(-i)$ has vanishing cohomology for $1\leq i\leq \lfloor n(p-1)/p\rfloor$. Since $n\geq dp+1$, we have $n(p-1)/p<n-d$, hence $i\leq n-d-1$. The desired vanishing now follows by taking $e=-i$ in Theorem~\ref{thm:coh-omega}(3),(4), concluding our proof.
\end{proof}

If $\mc{P}$ is a polynomial functor defined over the prime field $\bb{Z}/p\bb{Z}$, we can identify $\mc{P}\circ F^p=F^p\circ\mc{P}$, so Theorem~\ref{thm:invariance-Frob} follows from Lemma~\ref{lem:Frob-PcircFp} for such $\mc{P}$. To address the general case, we first record some elementary consequences of the K\"unneth formula and our results on the cohomology for Weyl modules (see also \cite{RV}*{Corollary~3.2}). For a sequence $\ul{d}=(d_1,\cdots,d_r)$ with $d_1+\cdots+d_r=d$, it follows from Theorem~\ref{thm:Weyl-Omega} that
\begin{equation}\label{eq:Hst-Ddd}
 H^j_{st}\left(D^{\ul{d}}\Omega\right) = \begin{cases}
  \kk & \text{if }j=d\text{ and }\ul{d}=(1^d), \\
  0 & \text{otherwise.}
 \end{cases}
\end{equation}
Since $D^{\ul{d}}$ is defined over $\bb{Z}/p\bb{Z}$, we can apply Lemma~\ref{lem:Frob-PcircFp} to get
\begin{equation}\label{eq:Hst-Frob-Ddd}
 H^j_{st}\left(F^p(D^{\ul{d}}\Omega)\right) = H^j_{st}\left(D^{\ul{d}}(F^p\Omega)\right) = \begin{cases}
  \kk & \text{if }j=d\text{ and }\ul{d}=(1^d), \\
  0 & \text{otherwise.}
 \end{cases}
\end{equation}
When $\ul{d}=(1^d)$ we have that $D^{(1^d)}=T^d$ is the tensor power functor, which has a natural action of the symmetric group $\mf{S}_d$ by permuting the tensor factors. We have an identification
\[ \Hom_{\mf{Pol}_d}(T^d,T^d) = \kk[\mf{S}_d],\]
and for a permutation $\sigma\in\mf{S}_d$ we get that the induced map on stable cohomology
\[ H^d_{st}(\sigma) = \left(\kk \overset{\cdot\sgn(\sigma)}{\lra}\kk \right)\]
is scalar multiplication by the sign of $\sigma$. We abbreviate this as $H^d_{st}(\sigma)=\sgn(\sigma)$, and using the identification
\[ \Hom_{\mf{Pol}_d}(T^d,T^d) = \kk[\mf{S}_d],\]
we can write every endomorphism $\phi$ of $T^d$ as $\phi=\sum_{\s\in\mf{S}_d}a_{\s}\cdot\sigma$, thus $H^d_{st}(\phi) =  \sum_{\s\in\mf{S}_d}a_{\s}\cdot\sgn(\sigma)$. Postcomposition with $F^p$ yields an endomorphism
\[\mathstrut^{p}\phi\in\Hom_{\mf{Pol}_d}(F^p\circ T^d,F^p\circ T^d),\quad\text{given by}\quad \mathstrut^{p}\phi=\sum_{\s\in\mf{S}_d}a_{\s}^p\cdot\sigma,\]
Applying the stable cohomology functor $H^d_{st}(-)$ yields scalar multiplication by
\begin{equation}\label{eq:Hdst-phip=pphi}
H^d_{st}\left(\mathstrut^{p}\phi\right) = \sum_{\s\in\mf{S}_d}a_{\s}^p\cdot\sgn(\sigma) = \sum_{\s\in\mf{S}_d}a_{\s}^p\cdot\sgn(\sigma)^p = \left(\sum_{\s\in\mf{S}_d}a_{\s}\cdot\sgn(\sigma)\right)^p.
\end{equation}

\begin{proof}[Proof of Theorem~\ref{thm:invariance-Frob}] By Lemma~\ref{lem:Frob-PcircFp}, it suffices to consider the composition $F^p\circ\mc{P}$, which is a polynomial functor of degree $pd$. It follows from Theorem~\ref{thm:stab-coh-omega} and the hypothesis $n-1\geq p\cdot d$ that in order to complete the proof of Theorem~\ref{thm:invariance-Frob}, we need to identify the stable cohomology of $\mc{P}(\Omega)$ and $F^p(\mc{P}(\Omega))$. The functor $\mc{P}$ admits a resolution $\mc{F}_{\bullet}:\cdots\lra\mc{F}_1\lra\mc{F}_0$, where each $\mc{F}_i$ is a direct sum of $D^{\ul{d}}$ with $|\ul{d}|=d$ \cite{Krause-book}*{Chapter~8}. 

It follows from \eqref{eq:Hst-Ddd} that the stable cohomology of $\mc{P}(\Omega)$ is computed as the homology of the complex $H^d_{st}\left(\mc{F}_{\bullet}(\Omega)\right)$, where the only non-trivial contributions to stable cohomology come from the summands $D^{\ul{d}}$ of $\mc{F}_i$ for which $\ul{d}=(1^d)$. Similarly, \eqref{eq:Hst-Frob-Ddd} implies that the stable cohomology of $F^p(\mc{P}(\Omega))$ is computed as the homology of the complex $H^d_{st}\left((F^p\circ\mc{F}_{\bullet})(\Omega)\right)$, and moreover $H^d_{st}\left((F^p\circ\mc{F}_{i})(\Omega)\right)=H^d_{st}\left(\mc{F}_{i}(\Omega)\right)$ for all $i$. Using \eqref{eq:Hdst-phip=pphi}, we obtain that the differentials in the complex $H^d_{st}\left((F^p\circ\mc{F}_{\bullet})(\Omega)\right)$ differ from those of $H^d_{st}\left(\mc{F}_{\bullet}(\Omega)\right)$ by a Frobenius twist. Since Frobenius is an exact functor on the category of $\kk$-vector spaces, it follows that $H^j_{st}\left(F^p(\mc{P}(\Omega))\right) = H^j_{st}\left(\mc{P}(\Omega)\right)$ for all $j$, concluding our proof.
\end{proof}

\subsection{Vanishing for $p$-cores with multiple columns}

Fix a $p$-core $\gamma$ and consider the subcategory $\mf{Pol}_{d,\gamma}$ of $\mf{Pol}_d$ consisting of functors which admit a filtration with composition factors $\bb{L}_{\mu}$ where $\op{core}_p(\mu)=\gamma$. Note that for this to be non-empty, we must have $|\gamma|\leq d$ and $|\gamma|\equiv d\ (\op{mod }p)$.

\begin{theorem}\label{thm:vanishing-p-core}
    If $\gamma_1\geq 2$ and $\mc{P}\in\mf{Pol}_{d,\gamma}$ then $H^j_{st}(\mc{P}(\Omega))=0$ for all $j$.
\end{theorem}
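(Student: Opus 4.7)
My approach is to reduce to the case of simple functors and then induct on the dominance order. First, by iterating the long exact sequence in cohomology along the given filtration of $\mc{P}$, it suffices to prove that $H^j_{st}(\bb{L}_\mu(\Omega)) = 0$ for every partition $\mu$ of $d$ with $\op{core}_p(\mu) = \gamma$.

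Before starting the induction, I record the elementary observation that the hypothesis $\gamma_1 \geq 2$ forces $\mu_1 \geq 2$ for every such $\mu$. Indeed, if instead $\mu = (1^{|\mu|})$, then the only rim $p$-hook one can remove from a single-column partition consists of the bottom $p$ boxes, so $\op{core}_p((1^k)) = (1^{k \bmod p})$ has at most one column, contradicting $\gamma_1 \geq 2$. In particular, the class of partitions considered in the induction is closed under passing to any partition with the same $p$-core.

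I now induct on $\mu$ among partitions of $d$ with $p$-core $\gamma$, with respect to the dominance order (a finite partial order, so well-founded). Consider the short exact sequence
\[ 0 \lra K \lra \bb{W}_\mu \lra \bb{L}_\mu \lra 0. \]
By the consequence of the Strong Linkage Principle recalled in Section~\ref{subsec:poly-fun}, every composition factor of $K$ is of the form $\bb{L}_\nu$ with $\nu < \mu$ in dominance order and $\op{core}_p(\nu) = \op{core}_p(\mu) = \gamma$. The inductive hypothesis gives $H^j_{st}(\bb{L}_\nu(\Omega)) = 0$ for each such $\nu$, and iterating the long exact sequence along a composition series of $K$ yields $H^j_{st}(K(\Omega)) = 0$. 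Since $\mu_1 \geq 2$, Theorem~\ref{thm:Weyl-Omega} (applied with $\mc{P}$ the trivial degree-$0$ functor) gives $H^j_{st}(\bb{W}_\mu(\Omega)) = 0$. The long exact sequence attached to the displayed short exact sequence then forces $H^j_{st}(\bb{L}_\mu(\Omega)) = 0$. The base case occurs when $\mu$ is minimal in dominance among partitions of $d$ with $p$-core $\gamma$; there $K=0$ and $\bb{L}_\mu = \bb{W}_\mu$, so the vanishing is immediate from Theorem~\ref{thm:Weyl-Omega}.

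There is no real obstacle in this argument: Theorem~\ref{thm:Weyl-Omega} supplies the vanishing for Weyl functors indexed by partitions with first part at least $2$, and Strong Linkage keeps the induction within the class of partitions with $p$-core $\gamma$, which (by the observation above) preserves the hypothesis $\nu_1 \geq 2$ needed to invoke Theorem~\ref{thm:Weyl-Omega} again at each step. Structurally, this is the same induction-on-dominance pattern that was used at the end of the proof of Theorem~\ref{thm:coh-omega} to pass from Schur functors to simple functors, but now bootstrapped off the stronger vanishing already available for Weyl functors.
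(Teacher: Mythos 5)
Your proposal is correct and is essentially the paper's own argument: reduce to simple functors $\bb{L}_\mu$ with $\op{core}_p(\mu)=\gamma$, induct on the dominance order using the short exact sequence $0\to K\to \bb{W}_\mu\to\bb{L}_\mu\to 0$ together with the Strong Linkage consequence recorded in the preliminaries, and invoke Theorem~\ref{thm:Weyl-Omega} for the Weyl functor term (and in the base case, where $\bb{L}_\mu=\bb{W}_\mu$). The only difference is that you spell out why $\gamma_1\geq 2$ forces $\mu_1\geq 2$, which the paper asserts implicitly.
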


\begin{proof}
    It suffices to prove the theorem for simple functors $\mc{P}=\bb{L}_{\ll}$ where $\op{core}_p(\ll)=\gamma$, and in particular $\ll_1\geq 2$. We argue by induction on $\ll$, with respect to the dominance order. If $\ll$ is minimal then $\bb{L}_{\ll}=\bb{W}_{\ll}$, and the conclusion follows from Theorem~\ref{thm:Weyl-Omega}. Otherwise, we have a short exact sequence in $\mf{Pol}_{d,\gamma}$
    \[ 0\lra \mc{P}' \lra \bb{W}_{\ll} \lra \bb{L}_{\ll} \lra 0,\]
    where $\mc{P}'$ has a filtration with composition factors $\bb{L}_{\mu}$ with $\mu<\ll$. By induction we know that the stable cohomology of $\mc{P}'\Omega$ vanishes, and by Theorem~\ref{thm:Weyl-Omega} we know that the stable cohomology of $\bb{W}_{\ll}\Omega$ vanishes as well. It follows that the same is true for $\bb{L}_{\ll}\Omega$, concluding our induction step and the proof.
\end{proof}

\section{Akin--Buchsbaum resolutions by tensor products of exterior powers}\label{sec:resolutions-ext-pows}

The goal of this section is to describe a concrete recipe to compute the stable cohomology of $\mc{P}\Omega$ for certain Schur functors $\mc{P}$, by employing explicit resolutions of $\mc{P}$ by tensor products of exterior power functors constructed based on \cite{AB1}. More precisely, we will consider two special cases:
\begin{itemize}
    \item $\mc{P}=\bb{S}_{\ll/\mu}$ is a skew-Schur functor associated to a ribbon shape (a special case of \cite{AB1}*{Section~6}).
    \item $\mc{P}=\bb{S}_{\ll}$ is the Schur functor associated to a partition with at most two columns \cite{AB1}*{Section~4}.
\end{itemize}
The explicit constructions of these resolutions give rise via hypercohomology to concrete complexes of finite dimensional vector spaces whose homology describes the stable cohomology of $\mc{P}\Omega$. In later sections we will give a more in-depth study of the homology of these complexes, leading to some very explicit calculations of stable cohomology. 

\subsection{Ribbon shapes}\label{subsec:ribbons}

Given partitions $\ll,\mu$, we write $\mu\subseteq\ll$ if $\mu_i\leq \ll_i$ for all $i$, and define the \defi{skew-shape} $\ll/\mu$ to be the diagram obtained from $\ll$ by removing the first $\mu_i$ boxes in row $i$ for each $i$. For instance, if $\ll = (4,3,3,2)$ and $\mu=(2,2,1)$ we get
\[
\ytableausetup{centertableaux,smalltableaux}
\ll/\mu = \ydiagram{2+2,2+1,1+2,2}
\]
We say that $\ll/\mu$ is a \defi{ribbon} if it is connected ($\mu_i<\ll_{i+1}$ whenever $\ll_{i+1}>0$), such as the displayed example above. The \defi{size} $|\ll/\mu|$ is given by $\sum(\ll_i-\mu_i)$, and it will often be convenient to record a ribbon of size $m$ by a composition $w_0+\cdots+w_d$ of $m$, where the parts $w_i$ represent (from left to right, and starting the column count at $0$) the number of boxes in the columns of $\ll/\mu$. In the example above, the composition $\ul{w}$ corresponding to $\ll/\mu$ is $\ul{w}=(1,2,3,1)$. We refer the reader to \cites{BTW,las-pra} for a more thorough introduction to ribbons and the combinatorial study of associated Schur functions. We note that our conventions are such that if $e_i$ denotes the $i$-th elementary symmetric function (with $e_0=1$ and $e_i=0$ for $i<0$), and if $\ll/\mu$ is the ribbon with composition $\ul{w}$, then the ribbon Schur function associated to $\ll/\mu$ is given by the Jacobi-Trudi formula
\begin{equation}\label{eq:Jac-Trudi-ribbon} s_{\ll/\mu} = \det(e_{\ll'_i-\mu'_j-i+j}) = \det\begin{bmatrix}
   e_{w_0} & e_{w_0+w_1} & e_{w_0+w_1+w_2} & \cdots & e_{w_0+\cdots + w_d} \\
   1 & e_{w_1} & e_{w_1+w_2} & \cdots & e_{w_1+\cdots+w_d} \\
   0 & 1 & e_{w_2} & \cdots & e_{w_2+\cdots+w_d} \\
   \vdots & \vdots & \vdots & \ddots & \vdots \\
   0 & 0 & 0 & \cdots & e_{w_d}
\end{bmatrix}.
\end{equation}
In characteristic zero, it is well-known that such (alternating sign) character formulas are often explained as an equivariant Euler characteristic calculation associated to a natural resolution of a given representation (the BGG resolution is perhaps the most famous instance of this \cite{BGG}, see also \cite{zel-res} and \cites{akin1,akin2}). In \cite{AB1}, Akin and Buchsbaum initiated the study of resolutions of Schur functors by tensor products of exterior powers in arbitrary characteristic. Although the resolutions are not explicit for a general skew-shape $\ll/\mu$, they are so in the case of ribbons where they precisely reflect the combinatorial identity \eqref{eq:Jac-Trudi-ribbon}, as we explain next.

Given a tuple $\ul{w}=(w_0,\cdots,w_d)$, a \defi{refinement} of $\ul{w}$ is obtained by adding together consecutive components of $\ul{w}$. Such refinements are naturally parametrized by subsets $J\subseteq [d]=\{1,\cdots,d\}$, and can be visualized as we show next by thinking of $[d]$ as indexing edges, and of $w_i$ as weights on the vertices of the path graph
\[ 
\xymatrix{ \overset{w_0}{\bullet} \ar@{-}[r]^1 & \overset{w_1}{\bullet} \ar@{-}[r]^2 & \overset{w_2}{\bullet} \ar@{.}[r] & \overset{w_{d-1}}{\bullet} \ar@{-}[r]^d & \overset{w_d}{\bullet} }
\]
Every subset $J\subseteq[d]$ determines a subgraph $\Gamma_J$ of the path graph consisting of disjoint intervals (connected components), and we define the \defi{weight} of an interval to be the sum of weights of the vertices it contains. We write $\ul{w}^J$ for the refinement of $\ul{w}$ whose entries are the weights of the intervals determined by $J$. For instance, we have for $d=7$ and $J = \{1,2,3,6\}$ we have $\ul{w}^J = (w_0+w_1+w_2+w_3,w_4,w_5+w_6,w_7)$ with corresponding graph (we usually remove the vertex and edge labels when they are understood)
\[
\Gamma_J:\qquad \xymatrix{ \bullet \ar@{-}[r] & \bullet \ar@{-}[r] & \bullet \ar@{-}[r] & \bullet  & \bullet & \bullet \ar@{-}[r] & \bullet & \bullet}
\]

For a tuple $\ul{w}$ we consider the polynomial functor
\[ \bw^{\ul{w}} = \bw^{w_0} \oo \cdots \oo \bw^{w_d}\]
and we define a complex $\mf{F}_{\bullet}=\mf{F}_{\bullet}^{\ul{w}}$ in the category of polynomial functors whose terms are
\[\mf{F}_k = \bigoplus_{J\in{[d]\choose k}} \mf{F}_{J},\quad\text{ where }\quad\mf{F}_{J} = \bw^{\ul{w}^J}.\]
To define the differential $\pd_k$ we write it as $\pd_k = \bigoplus \pd_{J,J'}$, where $\pd_{J,J'}:\mf{F}_{J}\lra\mf{F}_{J'}$ for $J=\{j_1<\cdots<j_k\}$ and $J' = J\setminus\{j_i\}$. To define $\pd_{J,J'}$, we write $\ul{w}^J=(w^J_0,w^J_1,\cdots)$, $\ul{w}^{J'}=(w^{J'}_0,w^{J'}_1,\cdots)$, and note that the removal of the edge $j_i$ from $\Gamma_J$ disconnects precisely one interval in $\Gamma_J$ of weight $w^J_s$ into two intervals of weights $w^{J'}_s$ and $w^{J'}_{s+1}$. We let
\begin{equation} \label{eq:pd-JJ'-on-F}
 \pd_{J,J'} = (-1)^{s}\cdot \left(\Delta \oo \op{id}\right), 
\end{equation}
where $\Delta$ is the comultiplication map
\[ \Delta:\bw^{w^J_s} \lra \bw^{w^{J'}_s} \oo \bw^{w^{J'}_{s+1}},\]
and $\op{id}$ denotes the identity map on the remaining factors of $\mf{F}_J$ and $\mf{F}_{J'}$. Continuing with the earlier example $J=\{1,2,3,6\}$, if we take $i=3$ then $j_i=3$, $J' = \{1,2,6\}$, and therefore we have $\ul{w}^{J'} = (w_0+w_1+w_2,w_3,w_4,w_5+w_6,w_7)$ and
\[ \bw^{w_0+w_1+w_2+w_3} \oo \left(\bw^{w_4} \oo \bw^{w_5+w_6} \oo \bw^{w_7}\right) \overset{\pd_{J,J'}=\Delta\oo\op{id}}{\lra} \left(\bw^{w_0+w_1+w_2} \oo \bw^{w_3}\right) \oo \left(\bw^{w_4} \oo \bw^{w_5+w_6} \oo \bw^{w_7}\right) \]
The construction from \cite{AB1}*{Section~6} specialized to the case of ribbons gives the following.

\begin{theorem}\label{thm:res-Schur-ribbons}
 If $\ul{w}$ is the composition corresponding to the ribbon $\ll/\mu$ then the complex $\mf{F}_{\bullet}^{\ul{w}}$ is a resolution of the ribbon Schur functor $\bb{S}_{\ll/\mu}$.
\end{theorem}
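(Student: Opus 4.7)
The plan is to induct on $d$. The base case $d=0$ is trivial: the complex reduces to $\bw^{w_0}=\bb{S}_{(1^{w_0})}$ in degree zero, which is indeed the ribbon Schur functor for a single column. Before invoking induction, I would first check that $(\mf{F}_\bullet^{\ul{w}},\pd)$ is a complex; the identity $\pd^2=0$ reduces to coassociativity of the comultiplication $\bw^{a+b+c}\to\bw^a\oo\bw^b\oo\bw^c$, together with the standard cubical-sign verification enforced by the $(-1)^s$ in \eqref{eq:pd-JJ'-on-F}.

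For the inductive step, I would filter $\mf{F}_\bullet^{\ul{w}}$ according to whether a subset $J\subseteq[d]$ contains the last edge $d$. Subsets not containing $d$ form a subcomplex, since the terminal vertex $w_d$ is then untouched in every relevant comultiplication; this subcomplex is naturally identified with $\mf{F}_\bullet^{\ul{w}'}\oo\bw^{w_d}$ for $\ul{w}'=(w_0,\cdots,w_{d-1})$. The quotient complex, obtained from subsets containing $d$ (where edge $d$ merges $w_{d-1}$ with $w_d$), is isomorphic to $\mf{F}_\bullet^{\ul{w}''}$ shifted up by one homological degree, for $\ul{w}''=(w_0,\cdots,w_{d-2},w_{d-1}+w_d)$. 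This produces a short exact sequence of complexes
\begin{equation*}
0 \lra \mf{F}_\bullet^{\ul{w}'}\oo\bw^{w_d} \lra \mf{F}_\bullet^{\ul{w}} \lra \mf{F}_\bullet^{\ul{w}''}[1] \lra 0.
\end{equation*}
By the inductive hypothesis the outer terms resolve $\bb{S}_{\ll'/\mu'}\oo\bw^{w_d}$ and $\bb{S}_{\ll''/\mu''}$ respectively, where $\ll'/\mu'$ and $\ll''/\mu''$ are the ribbons with compositions $\ul{w}'$ and $\ul{w}''$. The long exact sequence in homology then collapses to $H_i(\mf{F}_\bullet^{\ul{w}})=0$ for $i\geq 2$ together with
\begin{equation*}
0 \lra H_1(\mf{F}_\bullet^{\ul{w}}) \lra \bb{S}_{\ll''/\mu''} \overset{\delta}{\lra} \bb{S}_{\ll'/\mu'}\oo\bw^{w_d} \lra H_0(\mf{F}_\bullet^{\ul{w}}) \lra 0,
\end{equation*}
in which $\delta$ is, by tracing the edge-$d$-removal differential, the natural map induced by comultiplication on the $(w_{d-1}+w_d)$-factor.

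The main obstacle is to prove that $\delta$ is injective and that its cokernel equals $\bb{S}_{\ll/\mu}$. This amounts to recognizing the ribbon Schur functor $\bb{S}_{\ll/\mu}$ as the cokernel of a canonical ``Jacobi-Trudi'' morphism, matching the determinantal expansion in \eqref{eq:Jac-Trudi-ribbon} at the level of characters. I would either invoke directly the presentation of skew Schur functors from \cite{AB1}, or argue over $\bb{Z}$ as follows: first use the defining relations of $\bb{S}_{\ll/\mu}$ to identify the expected cokernel on free modules, then deduce injectivity of $\delta$ from the fact that it is a map of polynomial functors on free $\bb{Z}$-modules whose target is torsion-free, and that after base change to $\bb{Q}$ the map is injective thanks to the character identity coming from \eqref{eq:Jac-Trudi-ribbon}.
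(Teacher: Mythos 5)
Your proposal is correct and takes essentially the same route as the paper: the paper also inducts on $d$ and realizes $\mf{F}_{\bullet}^{\ul{w}}$ as the mapping cone of the comultiplication-induced map $\mf{F}_{\bullet}^{(w_0+w_1,w_2,\cdots,w_d)}\lra\bw^{w_0}\oo\mf{F}_{\bullet}^{(w_1,\cdots,w_d)}$ — the same sub/quotient decomposition you use, just cut at the first column rather than the last — and then invokes the exact sequence of \cite{AB1}*{Equation (6) in Section~6} to see that the induced map on $H_0$ is injective with cokernel $\bb{S}_{\ll/\mu}$, which is precisely the input you isolate as the main obstacle. Your fallback over $\bb{Z}$ also works, with the small corrections that it is the torsion-freeness (indeed freeness) of the \emph{source} $\bb{S}_{\ll''/\mu''}$ that lets injectivity descend from $\bb{Q}$ to $\bb{Z}$, and freeness of the cokernel that keeps the sequence exact after base change to $\kk$.
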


\begin{proof}
When $d=0$ we have $\bb{S}_{\ll/\mu}=\bw^{w_0}$ and there is nothing to prove. Suppose now $d>0$ and observe that the comultiplication maps
 \[ \bw^{w_0+w_1+\cdots + w_r} \lra \bw^{w_0} \oo \bw^{w_1+\cdots + w_r}\]
 induce a morphism of complexes
 \[ f : \mf{F}_{\bullet}^{(w_0+w_1,w_2,\cdots,w_d)} \lra \bw^{w_0} \oo\, \mf{F}_{\bullet}^{(w_1,w_2,\cdots,w_d)}\]
 whose mapping cone is $\mf{F}_{\bullet}^{\ul{w}}$. Using \cite{AB1}*{Equation (6) in Section~6} and induction on $d$, we get that
 \[ H_0\left(\mf{F}_{\bullet}^{(w_0+w_1,w_2,\cdots,w_d)}\right)\overset{H_0(f)}{\lra} \bw^{w_0} \oo\, H_0\left(\mf{F}_{\bullet}^{(w_1,w_2,\cdots,w_d)}\right)\]
 gives a presentation of $\bb{S}_{\ll/\mu}$. Using the mapping cone long exact sequence in homology, it follows that $\mf{F}_{\bullet}^{\ul{w}}$ resolves $\bb{S}_{\ll/\mu}$, as desired.
\end{proof}

\begin{remark}
    The reader familiar with the theory of Koszul algebras will recognize the complex $\mf{F}_\bullet$ as a homogeneous strand of the (coaugmented) cobar complex on the coalgebra $\bigwedge^\bullet$. It is a consequence of Backelin's work \cite{backelinThesis} and the Koszul property of the exterior algebra $\bigwedge^\bullet$ that $\mf{F}_\bullet$ is a resolution (see \cite{polishchuk2005quadratic}*{Proposition 8.3} for a dual statement), and the presentation of Schur modules from \cite{ABW} shows that the $0$th homology agrees with the ribbon Schur functor $\bbs_{\ll / \mu}$.
\end{remark}

\begin{remark}
A combinatorial shadow of the construction in the proof of Theorem~\ref{thm:res-Schur-ribbons} arises by considering the cofactor expansion of \eqref{eq:Jac-Trudi-ribbon} along the first column:
\[s_{\ll/\mu} = e_{w_0}\cdot\det\begin{bmatrix}
e_{w_1} & e_{w_1+w_2} & \cdots & e_{w_1+\cdots+w_d} \\
1 & e_{w_2} & \cdots & e_{w_2+\cdots+w_d} \\
\vdots & \vdots & \ddots & \vdots \\
0 & 0 & \cdots & e_{w_d}
\end{bmatrix}
-
\det\begin{bmatrix}
e_{(w_0+w_1)} & e_{(w_0+w_1)+w_2} & \cdots & e_{(w_0+w_1)+\cdots + w_d} \\
1 & e_{w_2} & \cdots & e_{w_2+\cdots+w_d} \\
\vdots & \vdots & \ddots & \vdots \\
0 & 0 & \cdots & e_{w_d}
\end{bmatrix}
\]
The reader can check that the first term in the above difference corresponds to $\bw^{w_0}\oo\mf{F}_{\bullet}^{(w_1,\cdots,w_d)}$, while the second term corresponds to $\mf{F}_{\bullet}^{(w_0+w_1,\cdots,w_d)}$.
\end{remark}

We next explain how the resolution $\mf{F}_{\bullet}$ gives rise to a universal complex computing the stable cohomology of 
$\bb{S}_{\ll/\mu}\Omega$, where $\ll/\mu$ is the ribbon Schur functor associated to the composition $\ul{w}$.
We work on projective space $\PP\simeq\PP^{n-1}$, and assume that $n\gg 0$. If we write $N=|\ul{w}|=|\ll|-|\mu|$ then we get from Theorem~\ref{thm:tens-Omega=shift-coh} and \eqref{eq:Hd-of-Omegad} that for every subset $J\subset[d]$ we have
\begin{equation}\label{eq:Hi-FJOmega}
H^i(\PP,\mf{F}_J(\Omega)) = \begin{cases} \kk & \text{if }i=N, \\
0 & \text{otherwise}.
\end{cases}
\end{equation}
We consider the complex of $\kk$-vector spaces $C_{\bullet}=C_{\bullet}(w_0,\cdots,w_d)$ defined by
\[ C_{\bullet} = H^N\left(\PP,\mf{F}_\bullet(\Omega)\right),\]
and write $C_J = H^N(\PP,\mf{F}_J(\Omega))=\kk\cdot f_J$, where $f_J$ denotes a fixed generator of $C_J$. It follows that 
\[C_k = \bigoplus_{J\in{[d]\choose k}} C_J \simeq \kk^{\oplus{d\choose k}}\text{ for all }k.\]
Since there is no danger of confusion, we denote by $\pd$ the differential in $C_{\bullet}$ (as we do for $\mf{F}_{\bullet}$), and we write $\pd_{J,J'}:C_J \lra C_{J'}$ for the restriction of $\pd$ to the summands $C_J\subset C_k$ and $C_{J'}\subset C_{k-1}$. It follows from \eqref{eq:pd-JJ'-on-F} and Corollary~\ref{cor:multOmega-iso-cohom}(ii) that $\pd_{J,J'}:C_J\lra C_{J'}$ is given by
\begin{equation}\label{eq:bin-coe-delJJ'} \pd_{J,J'}(f_J) = (-1)^s {w^J_s \choose w^{J'}_s}\cdot f_{J'}
\end{equation}
In the case $d=3$ the resulting complex is depicted in Figure~\ref{fig:Cw-complex}, where the graph $\Gamma_J$ takes the place of the component $C_J$ (note that $s$ counts the number of missing edges in $\Gamma_J$ which are to the left of the edge $j_i$, where $\{j_i\}=J\setminus J'$).

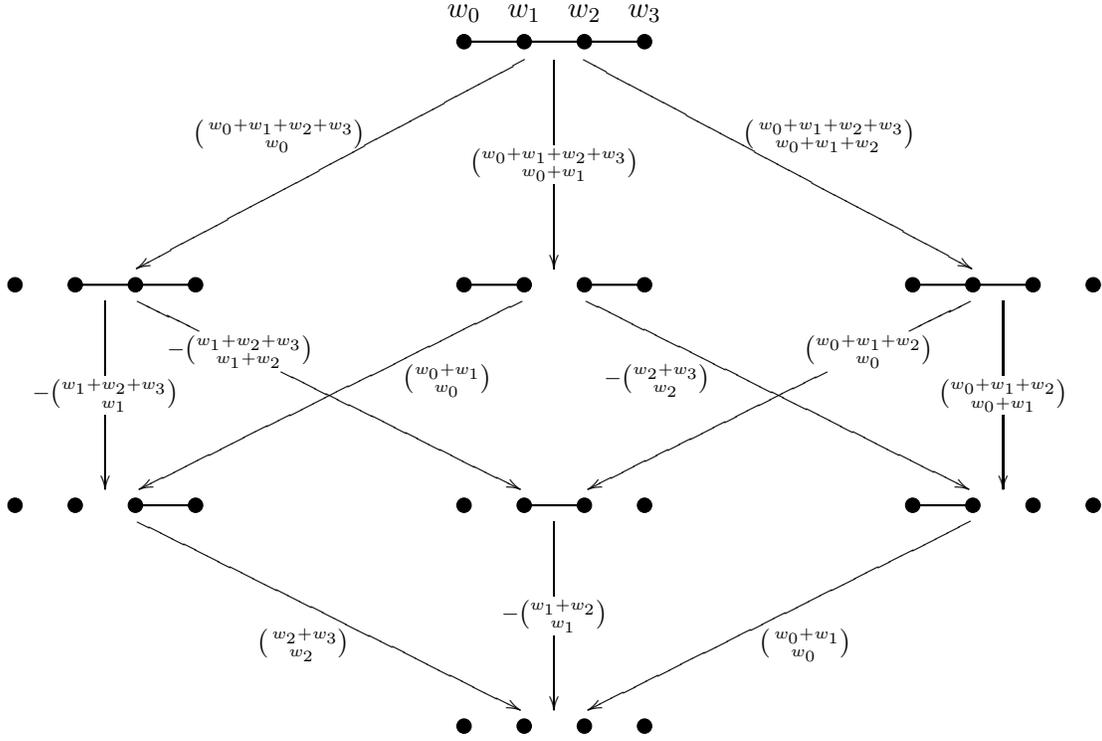
\begin{figure}[h]
\[
\xymatrix @C=7.5em @R=6.5em{
 & {\begin{tikzpicture}[scale=0.8]
\draw [thick] (0,0)--(1,0)--(2,0)--(3,0);
\node[arn_n] at (0,0) {};
\node[label=above:{$w_0$}] at (0,0) {};
\node[arn_n] at (1,0) {}; 
\node[label=above:{$w_1$}] at (1,0) {};
\node[arn_n] at (2,0) {}; 
\node[label=above:{$w_2$}] at (2,0) {};
\node[arn_n] at (3,0) {}; 
\node[label=above:{$w_3$}] at (3,0) {};
\end{tikzpicture}} 
\ar[dl]_{w_0+w_1+w_2+w_3\choose w_0} 
\ar[d]|{{w_0+w_1+w_2+w_3\choose w_0+w_1}} 
\ar[dr]^{w_0+w_1+w_2+w_3\choose w_0+w_1+w_2} & \\
{\begin{tikzpicture}[scale=0.8]
\draw [thick] (1,0)--(2,0)--(3,0);
\node[arn_n] at (0,0) {};
\node[arn_n] at (1,0) {}; 
\node[arn_n] at (2,0) {}; 
\node[arn_n] at (3,0) {}; 
\end{tikzpicture}} 
\ar[d]|{-{w_1+w_2+w_3\choose w_1}} \ar[dr]|(.3){-{w_1+w_2+w_3\choose w_1+w_2}}  
& 
{\begin{tikzpicture}[scale=0.8]
\draw [thick] (0,0)--(1,0);
\draw [thick] (2,0)--(3,0);
\node[arn_n] at (0,0) {};
\node[arn_n] at (1,0) {}; 
\node[arn_n] at (2,0) {}; 
\node[arn_n] at (3,0) {}; 
\end{tikzpicture}} 
\ar[dl]^(.3){{w_0+w_1\choose w_0}} \ar[dr]_(.3){-{w_2+w_3\choose w_2}}  
&
{\begin{tikzpicture}[scale=0.8]
\draw [thick] (0,0)--(1,0)--(2,0);
\node[arn_n] at (0,0) {};
\node[arn_n] at (1,0) {}; 
\node[arn_n] at (2,0) {}; 
\node[arn_n] at (3,0) {}; 
\end{tikzpicture}} 
\ar[d]|{{w_0+w_1+w_2\choose w_0+w_1}} \ar[dl]|(.3){{w_0+w_1+w_2\choose w_0}}  
\\
{\begin{tikzpicture}[scale=0.8]
\draw [thick] (2,0)--(3,0);
\node[arn_n] at (0,0) {};
\node[arn_n] at (1,0) {}; 
\node[arn_n] at (2,0) {}; 
\node[arn_n] at (3,0) {}; 
\end{tikzpicture}} 
\ar[dr]_{w_2+w_3\choose w_2}
& 
{\begin{tikzpicture}[scale=0.8]
\draw [thick] (1,0)--(2,0);
\node[arn_n] at (0,0) {};
\node[arn_n] at (1,0) {}; 
\node[arn_n] at (2,0) {}; 
\node[arn_n] at (3,0) {}; 
\end{tikzpicture}} 
\ar[d]|{-{w_1+w_2\choose w_1}}
&
{\begin{tikzpicture}[scale=0.8]
\draw [thick] (0,0)--(1,0);
\node[arn_n] at (0,0) {};
\node[arn_n] at (1,0) {}; 
\node[arn_n] at (2,0) {}; 
\node[arn_n] at (3,0) {}; 
\end{tikzpicture}} 
\ar[dl]^{w_0+w_1\choose w_0}
\\
&
{\begin{tikzpicture}[scale=0.8]
\node[arn_n] at (0,0) {};
\node[arn_n] at (1,0) {}; 
\node[arn_n] at (2,0) {}; 
\node[arn_n] at (3,0) {}; 
\end{tikzpicture}}  & \\
}
\]
\caption{The complex $C_{\bullet}(w_0,w_1,w_2,w_3)$, $w_0,w_1,w_2\geq 0$, $w_3\in\bb{Z}$.}
\label{fig:Cw-complex}
\end{figure}

\begin{theorem}\label{thm:stab-coh-ribbon=hom-Cw}
    With notation as in Theorem~\ref{thm:res-Schur-ribbons}, and $N=|\ul{w}|$, we have
    \[H^j_{st}\left(\bb{S}_{\ll/\mu}\Omega\right) = H_{N-j}\left(C_{\bullet}(\ul{w})\right)\quad\text{ for all }j.\]
\end{theorem}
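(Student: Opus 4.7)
The plan is to interpret $C_\bullet$ as the only non-trivial row of the hypercohomology spectral sequence associated to the resolution $\mf{F}_\bullet \to \bb{S}_{\ll/\mu}$ applied to $\Omega$, and then invoke the cohomology vanishing \eqref{eq:Hi-FJOmega} to force degeneration at the $E_2$ page.

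First, I would verify that $\mf{F}_\bullet(\Omega)$ is an acyclic resolution of $\bb{S}_{\ll/\mu}\Omega$ as a complex of sheaves on $\PP=\PP^{n-1}$. The complex $\mf{F}_\bullet\to\bb{S}_{\ll/\mu}$ of Theorem~\ref{thm:res-Schur-ribbons} is exact in the category of polynomial functors, so it remains exact upon evaluation at any vector space; since $\Omega$ is locally free, the stalks of $\mf{F}_\bullet(\Omega)$ form an exact sequence, hence $\mf{F}_\bullet(\Omega)\to\bb{S}_{\ll/\mu}\Omega$ is exact as a complex of coherent sheaves on $\PP$.

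Next, view $\mf{F}_\bullet(\Omega)$ as a cochain complex concentrated in non-positive cohomological degrees (with $\mf{F}_p$ placed in cohomological degree $-p$). The hypercohomology spectral sequence has
\[
E_1^{-p,q} \;=\; H^q(\PP,\mf{F}_p(\Omega)) \;\Longrightarrow\; H^{q-p}(\PP,\bb{S}_{\ll/\mu}\Omega).
\]
For $n\gg 0$, the formula \eqref{eq:Hi-FJOmega} implies that $E_1^{-p,q}$ vanishes unless $q=N$; that is, the entire $E_1$ page is concentrated in the single row $q=N$, and in that row we have $E_1^{-p,N}=C_p$. All differentials on pages $E_r$ for $r\geq 2$ necessarily vanish for degree reasons, and the $d_1$ differential is precisely the map on $N$-th cohomology induced by the differential $\pd$ of $\mf{F}_\bullet$. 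Convergence then yields
\[
H^{N-p}(\PP,\bb{S}_{\ll/\mu}\Omega) \;=\; H_p\!\bigl(C_\bullet, d_1\bigr),
\]
which after setting $j=N-p$ and taking $n$ large enough to be in the stable range gives $H^j_{st}(\bb{S}_{\ll/\mu}\Omega)=H_{N-j}(C_\bullet)$, as required.

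The one genuine verification remaining is that the $d_1$ differential really is the map whose signed binomial coefficients are recorded in \eqref{eq:bin-coe-delJJ'}. This is where Corollary~\ref{cor:multOmega-iso-cohom} does the work: the component $\pd_{J,J'}\colon\mf{F}_J(\Omega)\to\mf{F}_{J'}(\Omega)$ from \eqref{eq:pd-JJ'-on-F} is, up to the sign $(-1)^s$ and an identity factor, a comultiplication $\Delta\colon\bw^{w^J_s}\Omega \to \bw^{w^{J'}_s}\Omega\oo\bw^{w^{J'}_{s+1}}\Omega$. Part~(i) of Corollary~\ref{cor:multOmega-iso-cohom} identifies the $N$-th cohomology groups of $\mf{F}_J(\Omega)$ and $\mf{F}_{J'}(\Omega)$ canonically (both equal to $\kk$ after choosing the generators $f_J, f_{J'}$ coming from exterior multiplication), and part~(ii) computes the effect of $\Delta$ on cohomology under this identification to be multiplication by the binomial coefficient $\binom{w^J_s}{w^{J'}_s}$. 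This produces exactly the formula \eqref{eq:bin-coe-delJJ'} for $d_1$, so $H_p(C_\bullet)$ coincides with the output of the spectral sequence. The main subtlety in the whole argument is this bookkeeping: confirming that the canonical generators $f_J$ chosen via Corollary~\ref{cor:multOmega-iso-cohom}(i) are compatible across all $J$ so that the binomial-coefficient formula holds with the stated signs; everything else is routine spectral-sequence machinery.
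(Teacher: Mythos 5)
Your proposal is correct and follows essentially the same route as the paper: the hypercohomology spectral sequence of $\mf{F}_{\bullet}(\Omega)$, concentration of the $E_1$ page in the single row $q=N$ via \eqref{eq:Hi-FJOmega}, degeneration at $E_2$, and identification of that row with $C_{\bullet}(\ul{w})$. The extra bookkeeping you flag (that $d_1$ is given by \eqref{eq:bin-coe-delJJ'} via Corollary~\ref{cor:multOmega-iso-cohom}) is exactly how the paper constructs $C_{\bullet}$ in the setup preceding the theorem, so nothing is missing.
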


\begin{proof}
    From the resolution $\mf{F}_{\bullet}\Omega$ of $\bb{S}_{\ll/\mu}\Omega$ we get the hypercohomology spectral sequence
    \[E_1^{-i,j} = H^j\left(\PP,\mf{F}_i\Omega\right) \Longrightarrow \bb{H}^{j-i}\left(\PP,\mf{F}_{\bullet}\Omega\right) = H^{j-i}_{st}\left(\bb{S}_{\ll/\mu}\Omega\right). \]
    It follows from \eqref{eq:Hi-FJOmega} that $E_1^{-i,j}$ is non-zero only for $j=N$, and by construction we have $E_1^{-\bullet,N}=C_{\bullet}(\ul{w})$. It follows that the spectral sequence degenerates at the $E_2$-page, where we have
    \[ E_2^{-i,N} = H_i(C_{\bullet}),\quad E_2^{-i,j}=0\text{ for }j\neq N,\]
    from which the desired conclusion follows.
\end{proof}

\begin{remark}\label{rem:negative-wd}
    Even though for the construction of the complex $\mf{F}_{\bullet}^{\ul{w}}$ we assumed $w_i\geq 0$, it is clear from \eqref{eq:bin-coe-delJJ'} that we can allow $w_d<0$, and use the standard convention
    \begin{equation}\label{eq:x-choose-i}{x\choose i} = \frac{x(x-1)\cdots(x-i+1)}{i!}\quad\text{ for all }x\in\bb{Z}\text{ and all }i\geq 0.
    \end{equation}
    This generality will be important in Section~\ref{subsec:two-cols} (see also Section~\ref{subsec:duality-hook-2column}).
\end{remark}

\subsection{Two column partitions}\label{subsec:two-cols}

In this section we consider a partition $\ll$ whose Young diagram has two columns. We write $\ll'=(m,d)$ for the conjugate partition, and recall from \cite{AB1}*{Section~4} the construction of a resolution of the Schur functor $\bb{S}_{\ll}$. As remarked in \cite{AB1}*{Equations (5)-(6) in Section~1}, it is not possible to write a universal resolution by tensor products of exterior powers whose terms match up exactly with those in the Jacobi-Trudi determinantal expansion of the Schur function $s_{\ll}$. Nevertheless, a larger resolution exists and is constructed in \cite{AB1}*{Section~4}. We recall the construction here in a form that will be most convenient for our applications.

We define a complex $\mf{G}_{\bullet}=\mf{G}^{(m,d)}_{\bullet}$ in the category of polynomial functors, whose terms are
\[\mf{G}_t = \bigoplus_{D\in{[d]\choose t}} \mf{G}_{D},\quad\text{ where }\quad\mf{G}_D = \bw^{m+d_t} \oo \bw^{d-d_t}\quad\text{ for }D=\{d_1<\cdots<d_t\}.\]
To define the differential $\delta_t$, we decompose $\delta_t = \bigoplus\delta_{D,D'}$, where $\delta_{D,D'}:\mf{G}_D\lra \mf{G}_{D'}$. For $D=\{d_1<\cdots<d_t\}$ and $D' = D\setminus\{d_j\}$ we let $\op{id}$ denote the identity map, and let $\square$ denote the composition
\[\bw^{m+d_t} \oo \bw^{d-d_t} \overset{\Delta \oo \op{id}}{\lra} 
\left(\bw^{m+d_{t-1}} \oo \bw^{d_t-d_{t-1}}\right) \oo \bw^{d-d_t} 
=
\bw^{m+d_{t-1}} \oo \left(\bw^{d_t-d_{t-1}} \oo \bw^{d-d_t}\right) 
\overset{\op{id}\oo \wedge}{\lra} \bw^{m+d_{t-1}} \oo \bw^{d-d_{t-1}}\]
where $\Delta$ denotes comultiplication, and $\wedge$ denotes exterior multiplication. We then set $d_0=0$ and define
\begin{equation}\label{eq:del-DD'}
\delta_{D,D'} = \begin{cases}
    (-1)^{j+1}\cdot {d_{j+1}-d_{j-1} \choose d_j-d_{j-1}}\cdot\op{id} & \text{if }j<t; \\
    (-1)^{t+1+d_t - d_{t-1}}\cdot\square & \text{if }j=t. \\
\end{cases}
\end{equation}
With the notation above, the following is a consequence of \cite{AB1}*{Theorem~(12)}.

\begin{theorem}\label{thm:res-Schur-twocols}
 If $\ll'=(m,d)$ then the complex $\mf{G}_{\bullet}^{(m,d)}$ is a resolution of the Schur functor $\bb{S}_{\ll}$.
\end{theorem}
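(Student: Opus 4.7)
The theorem is essentially a combinatorial reformulation of Akin--Buchsbaum's resolution \cite{AB1}*{Theorem~(12)}, so the core of the proof is a careful translation of conventions. I would begin by identifying $H_0(\mf{G}_\bullet^{(m,d)})$ with $\bb{S}_\ll$: the differential $\delta_1$ restricts on the summand indexed by $D=\{j\}$ to $(-1)^{j}\cdot\square : \bw^{m+j}\oo\bw^{d-j} \lra \bw^m\oo\bw^d$, and the cokernel of the direct sum of these maps for $j=1,\cdots,d$ is the classical Akin--Buchsbaum--Woodcock presentation of the Schur module for a two-column partition, as recorded in \cite{ABW} and \cite{AB1}*{Section~4}.

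For higher acyclicity, the most direct route is to match $\mf{G}_\bullet$ termwise with the resolution of \cite{AB1}*{Theorem~(12)}, using the bijection between subsets $D=\{d_1<\cdots<d_t\}\subseteq[d]$ and their combinatorial indices, and verifying that the signs and binomial coefficients in \eqref{eq:del-DD'} reproduce their differentials. An independent route would be to induct on $d$, with trivial base case $d=0$: split $\mf{G}_\bullet$ via the short exact sequence of complexes given by the subcomplex $\mf{G}'_\bullet$ spanned by summands with $d\not\in D$ and the quotient $\mf{G}''_\bullet$ spanned by summands with $d\in D$. Every term of $\mf{G}''_\bullet$ equals $\bw^{m+d}$, and its internal differential (using only the $j<t$ case of \eqref{eq:del-DD'}) forms a Koszul-type complex on $[d-1]$ with concentrated homology, while $\mf{G}'_\bullet$ is close to $\mf{G}_\bullet^{(m,d-1)}$ with the $\bw^{d-d_t}$ slot modified by a comultiplication bookkeeping factor, so its acyclicity should follow from the inductive hypothesis after tracing the connecting maps in the associated long exact sequence in homology.

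The main obstacle is precisely the sign and binomial bookkeeping embedded in \eqref{eq:del-DD'}: the two cases defining $\delta_{D,D'}$ (scalar multiplication by $(-1)^{j+1}\binom{d_{j+1}-d_{j-1}}{d_j-d_{j-1}}$ when $j<t$, versus comultiplication followed by exterior multiplication when $j=t$) produce a noticeably asymmetric differential, and one must simultaneously verify $\delta^2=0$ and show that the resulting complex matches the one in \cite{AB1}*{Theorem~(12)} up to an augmentation-preserving isomorphism. Once this bookkeeping is settled, acyclicity in positive homological degrees follows either from the cited theorem or from the induction outlined above, concluding the proof.
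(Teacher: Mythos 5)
Your proposal takes essentially the same route as the paper: the paper offers no independent argument and simply records the theorem as a consequence of \cite{AB1}*{Theorem~(12)}, i.e., exactly the translation-of-conventions matching (terms, signs, and binomial coefficients in \eqref{eq:del-DD'}) that forms the core of your first approach. Your additional inductive sketch is an optional alternative, but the cited-matching argument you lead with is precisely what the paper relies on.
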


In analogy with the case of ribbon Schur functors, we get a resolution $\mf{G}_{\bullet}\Omega=\mf{G}_{\bullet}^{(m,d)}\Omega$ of $\bb{S}_{\ll}\Omega$ for $\ll=(m,d)'$, and study the associated hypercohomology spectral sequence. From Theorem~\ref{thm:tens-Omega=shift-coh}, we get that
\[ H^i(\PP,\mf{G}_D(\Omega)) = \begin{cases} \kk & \text{if }i=m+d, \\
0 & \text{otherwise},
\end{cases}\]
and define complexes $G_{\bullet}=G_{\bullet}(m,d)$ via
\[ G_{\bullet} = H^{m+d}\left(\PP,\mf{G}_\bullet(\Omega)\right).\]
We write $G_D = H^{m+d}(\PP,\mf{G}_D(\Omega))=\kk\cdot g_D$, and using \eqref{eq:del-DD'} and Corollary~\ref{cor:multOmega-iso-cohom}, we get that the differential in $G_{\bullet}$ is determined~by
\begin{equation}\label{eq:bin-coe-delDD'}
\delta_{D,D'}(g_D) = \begin{cases}
    (-1)^{j+1}\cdot {d_{j+1}-d_{j-1} \choose d_j-d_{j-1}}\cdot g_{D'} & \text{if }j<t; \\
    (-1)^{t+1+d_t-d_{t-1}}{m+d_{t} \choose d_t-d_{t-1}}\cdot g_{D'} & \text{if }j=t. \\
\end{cases}
\end{equation}
To establish the connection to the earlier complexes $C_{\bullet}$ we associate to $g_D$ the graph $\Gamma_{[d]\setminus D}$ with $d+1$ vertices having weights $(1^d,-m-d-1)$. We illustrate the case $d=3$ in Figure~\ref{fig:Gmd-complex}, where the graph $\Gamma_{[d]\setminus D}$ takes the place of the component $G_D$. Moreover, in writing the differential $\delta_{D,D'}$ we use the identity
\[(-1)^{t+1+d_t-d_{t-1}}{m+d_{t} \choose d_t-d_{t-1}} = (-1)^{t+1}\cdot {-m-1-d_{t-1}\choose d_t-d_{t-1}}.\]
The reader can check by comparing with Figure~\ref{fig:Cw-complex} that $G_{\bullet}(m,3)$ is dual to $C_{\bullet}(1,1,1,-m-4)$. We prove a similar statement for all $d$, as follows.

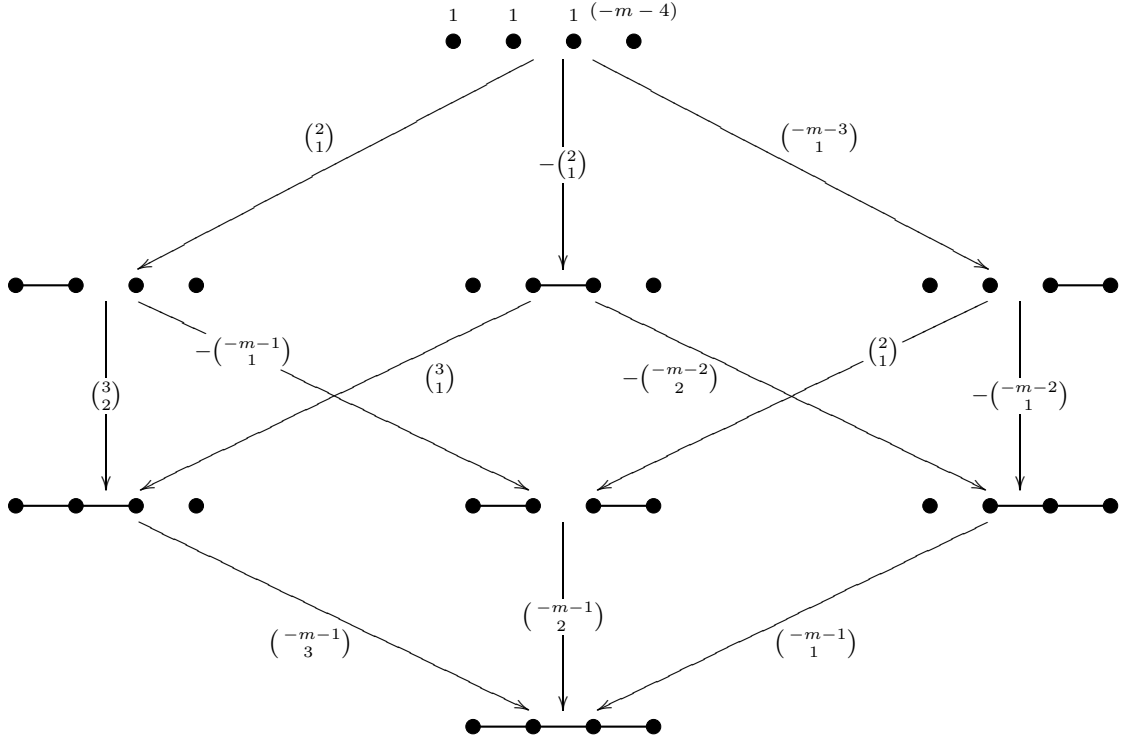
\begin{figure}[h]
\[
\xymatrix @C=7.5em @R=6.5em{
 & {\begin{tikzpicture}[scale=0.8]
\node[arn_n] at (0,0) {};
\node[label=above:{\tiny$1$}] at (0,0) {};
\node[arn_n] at (1,0) {}; 
\node[label=above:{\tiny$1$}] at (1,0) {};
\node[arn_n] at (2,0) {}; 
\node[label=above:{\tiny$1$}] at (2,0) {};
\node[arn_n] at (3,0) {}; 
\node[label=above:{\tiny$(-m-4)$}] at (3,0) {};
\end{tikzpicture}} 
\ar[dl]_{{2\choose 1}} 
\ar[d]|{-{2\choose 1}} 
\ar[dr]^{{-m-3\choose 1}} & \\
{\begin{tikzpicture}[scale=0.8]
\draw [thick] (0,0)--(1,0);
\node[arn_n] at (0,0) {};
\node[arn_n] at (1,0) {}; 
\node[arn_n] at (2,0) {}; 
\node[arn_n] at (3,0) {}; 
\end{tikzpicture}} 
\ar[d]|{{3\choose 2}} \ar[dr]|(.3){-{-m-1\choose 1}}  
& 
{\begin{tikzpicture}[scale=0.8]
\draw [thick] (1,0)--(2,0);
\node[arn_n] at (0,0) {};
\node[arn_n] at (1,0) {}; 
\node[arn_n] at (2,0) {}; 
\node[arn_n] at (3,0) {}; 
\end{tikzpicture}} 
\ar[dl]^(.3){{3\choose 1}} \ar[dr]_(.3){-{-m-2\choose 2}}  
&
{\begin{tikzpicture}[scale=0.8]
\draw [thick] (2,0)--(3,0);
\node[arn_n] at (0,0) {};
\node[arn_n] at (1,0) {}; 
\node[arn_n] at (2,0) {}; 
\node[arn_n] at (3,0) {}; 
\end{tikzpicture}} 
\ar[d]|{-{-m-2\choose 1}} \ar[dl]|(.3){{2\choose 1}}  
\\
{\begin{tikzpicture}[scale=0.8]
\draw [thick] (0,0)--(1,0)--(2,0);
\node[arn_n] at (0,0) {};
\node[arn_n] at (1,0) {}; 
\node[arn_n] at (2,0) {}; 
\node[arn_n] at (3,0) {}; 
\end{tikzpicture}} 
\ar[dr]_{-m-1\choose 3}
& 
{\begin{tikzpicture}[scale=0.8]
\draw [thick] (0,0)--(1,0);
\draw [thick] (2,0)--(3,0);
\node[arn_n] at (0,0) {};
\node[arn_n] at (1,0) {}; 
\node[arn_n] at (2,0) {}; 
\node[arn_n] at (3,0) {}; 
\end{tikzpicture}} 
\ar[d]|{-m-1\choose 2}
&
{\begin{tikzpicture}[scale=0.8]
\draw [thick] (1,0)--(2,0)--(3,0);
\node[arn_n] at (0,0) {};
\node[arn_n] at (1,0) {}; 
\node[arn_n] at (2,0) {}; 
\node[arn_n] at (3,0) {}; 
\end{tikzpicture}} 
\ar[dl]^{-m-1\choose 1}
\\
&
{\begin{tikzpicture}[scale=0.8]
\draw [thick] (0,0)--(1,0)--(2,0)--(3,0);
\node[arn_n] at (0,0) {};
\node[arn_n] at (1,0) {}; 
\node[arn_n] at (2,0) {}; 
\node[arn_n] at (3,0) {}; 
\end{tikzpicture}}  & \\
}
\]
\caption{The complex $G_{\bullet}(m,3)$, also the dual of $C_{\bullet}(1,1,1,-m-4)$}
\label{fig:Gmd-complex}
\end{figure}

\begin{theorem}\label{thm:two-columns}
    \begin{enumerate}
        \item For all $d\geq 0$ we have an isomorphism $G_{\bullet}(m,d) \simeq C_{\bullet}^{\vee}(1^d,-m-d-1)[-d]$.
        \item If $\ll'=(m,d)$ then the stable cohomology of $\bb{S}_{\ll}\Omega$ is given by
        \[ H^j_{st}\left(\bb{S}_{\ll}\Omega\right) = H_{j-m}\left(C_{\bullet}(1^d,-m-d-1)\right)\quad\text{ for all }j.\]
    \end{enumerate}
\end{theorem}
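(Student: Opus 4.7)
My plan has two steps: prove (1) by an explicit basis-level bijection matching the two complexes, then deduce (2) via the hypercohomology spectral sequence attached to $\mf{G}_\bullet^{(m,d)}\Omega$. For (1), I would assign to each basis vector $g_D$ of $G_\bullet(m,d)$ (where $D=\{d_1<\cdots<d_t\}\subseteq[d]$) the dual basis vector $f_{J_D}^*$ of $C_\bullet^\vee(1^d,-m-d-1)$ indexed by the complement $J_D := [d]\setminus D$. This identification is natural because both vectors are encoded by the same graph $\Gamma_{J_D}$ on $d+1$ vertices with the weight assignment $(1^d,-m-d-1)$, as displayed side-by-side in Figures~\ref{fig:Cw-complex} and~\ref{fig:Gmd-complex}. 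Degrees match after the shift, since $g_D$ sits in degree $|D|$ and $f_{J_D}^*$ sits in degree $|J_D|=d-|D|$ of $C_\bullet^\vee$, hence in degree $|D|$ of $C_\bullet^\vee[-d]$.

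Matching the differentials amounts to a coefficient comparison for each covering relation $D' = D \setminus \{d_j\}$ on the $G$-side, which corresponds to $J_{D'} = J_D \cup \{d_j\}$ on the $C$-side, so the coefficient in $\delta_{D,D'}(g_D)$ given by \eqref{eq:bin-coe-delDD'} should equal the coefficient of $f_{J_D}$ in $\partial_{J_{D'},J_D}(f_{J_{D'}})$ from \eqref{eq:bin-coe-delJJ'}. I would check the two cases separately. When $j<t$, the interval in $\Gamma_{J_{D'}}$ containing edge $d_j$ is $\{d_{j-1},\ldots,d_{j+1}-1\}$ (all vertices of weight $1$, with the convention $d_0 = 0$), and both formulas produce $(-1)^{j-1}\binom{d_{j+1}-d_{j-1}}{d_j-d_{j-1}}$ on the nose. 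When $j=t$, the interval being split in $\Gamma_{J_{D'}}$ contains the special vertex of weight $-m-d-1$, so the $C$-side coefficient is $(-1)^{t-1}\binom{-m-1-d_{t-1}}{d_t-d_{t-1}}$; the elementary identity $\binom{m+d_t}{d_t-d_{t-1}} = (-1)^{d_t-d_{t-1}}\binom{-m-1-d_{t-1}}{d_t-d_{t-1}}$ then reconciles this with the $G$-side formula $(-1)^{t+1+d_t-d_{t-1}}\binom{m+d_t}{d_t-d_{t-1}}$ from \eqref{eq:bin-coe-delDD'}.

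For (2), I would proceed in close parallel with the proof of Theorem~\ref{thm:stab-coh-ribbon=hom-Cw}: applying the functor of evaluation at $\Omega$ to the resolution of Theorem~\ref{thm:res-Schur-twocols} gives a resolution of $\bb{S}_\ll\Omega$ by $\mf{G}_\bullet^{(m,d)}\Omega$, and each term $\mf{G}_D\Omega = \Omega^{m+d_t}\otimes\Omega^{d-d_t}$ has stable cohomology concentrated in degree $m+d$ by Theorem~\ref{thm:tens-Omega=shift-coh} together with \eqref{eq:Hd-of-Omegad}, with that cohomology being precisely $G_D$ by construction. The associated hypercohomology spectral sequence thus degenerates at $E_2$, yielding $H^j_{st}(\bb{S}_\ll\Omega) = H_{m+d-j}(G_\bullet(m,d))$; combining with (1) and the standard fact that dualization preserves the dimensions of the homology of a bounded complex of finite-dimensional vector spaces produces the claimed formula. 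The main obstacle is the sign bookkeeping in the $j=t$ case of (1): the factor $(-1)^{d_t-d_{t-1}}$ from the binomial identity must exactly cancel the parity gap between the exponents $t-1$ and $t+1+d_t-d_{t-1}$, and one must carefully reconcile the sign conventions implicit in the dualization and the shift $[-d]$.
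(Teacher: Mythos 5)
Your proposal is correct and follows essentially the same route as the paper: the paper's map $\Psi$ is exactly your assignment $g_D\mapsto f^{\vee}_{[d]\setminus D}$, the coefficient comparison in the two cases $j<t$ and $j=t$ (including the identity $(-1)^{t+1+d_t-d_{t-1}}\binom{m+d_t}{d_t-d_{t-1}}=(-1)^{t+1}\binom{-m-1-d_{t-1}}{d_t-d_{t-1}}$, which the paper records just before Figure~\ref{fig:Gmd-complex}) is the same check, and part (2) is the same hypercohomology spectral sequence argument modeled on Theorem~\ref{thm:stab-coh-ribbon=hom-Cw}. No gaps beyond the shift/dualization sign conventions, which the paper treats with the same level of informality.
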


\begin{proof}
 To prove (1), we write $C_{\bullet}=C_{\bullet}(1^d,-m-d-1)$, and write $f_J^{\vee}\in C_J^{\vee}$ for the basis element dual to $f_J\in C_J$. We define a map $\Psi:G_{\bullet}(m,d) \lra C_{\bullet}^{\vee}(1^d,-m-d-1)[-d]$ by sending
 \[ \Psi(g_D) = f^{\vee}_{[d]\setminus D}\quad\text{ for all }D\subseteq [d].\]
 It is clear that $\Psi$ preserves homological degree, and is an isomorphism of the underlying vector spaces, so we only need to check that it commutes with the differentials. To that end, let $D=\{d_1<\cdots<d_t\}$ and $D'=D\setminus\{d_j\}$, and let $J=[d]\setminus D'$, $J'=[d]\setminus D = J\setminus\{d_j\}$. Notice that the edge $d_j$ in $\Gamma_J$ connects an interval of weight $d_j-d_{j-1}$ (to its left) to one of weight $d_{j+1}-d_j$ (to its right), where we set $d_{t+1}=-m-1$. Notice also that the number of missing edges in $\Gamma_J$ to the left of $d_j$ is $(j-1)$. It follows from \eqref{eq:bin-coe-delJJ'} that $\pd_{J,J'}$ is given as multiplication by the scalar
 \[(-1)^{j-1}\cdot {d_{j+1}-d_{j-1}\choose d_j-d_{j-1}},\]
 which agrees with the scalar in \eqref{eq:bin-coe-delDD'}, as desired.

 For (2), we argue as in the proof of Theorem~\ref{thm:stab-coh-ribbon=hom-Cw}. The hypercohomology spectral sequence
    \[E_1^{-i,j} = H^j\left(\PP,\mf{G}^{(m,d)}_i\Omega\right) \Longrightarrow H^{j-i}_{st}\left(\bb{S}_{\ll}\Omega\right) \]
satisfies $E_1^{-i,j}=0$ for $j\neq m+d$, and $E_1^{-\bullet,m+d} = G_{\bullet}(m,d)$. It follows that
\[H^j_{st}\left(\bb{S}_{\ll}\Omega\right) = H_{m+d-j}\left(G_{\bullet}(m,d)\right),\]
which by part (1) is isomorphic to (the dual of) $H_{j-m}\left(C_{\bullet}(1^d,-m-d-1)\right)$, concluding our proof.
\end{proof}

\section{Homology of some arithmetic complexes}\label{sec:arithmetic}

The goal of this section is to give a more in-depth study of the stable cohomology of Schur functors $\bb{S}_{\ll/\mu}\Omega$ for $\ll/\mu$ a ribbon, and for $\bb{S}_{\ll}\Omega$ when $\ll$ is a partition with two columns ($\ll_1=2$). Based on the results in Section~\ref{sec:resolutions-ext-pows}, this amounts to a thorough understanding of the homology of complexes of the form $C_{\bullet}(\ul{w})$ for certain tuples $\ul{w}$. We begin with a discussion of general weights $\ul{w}$, and then specialize our setting to the case when most $w_i$ are $1$, where we can give the most complete description of homology. We always assume that $w_0,\cdots,w_{d-1}\geq 0$, and $w_d\in\bb{Z}$ (possibly negative).

\subsection{General properties of the complexes $C_{\bullet}(\ul{w})$}

\begin{lemma}\label{lem:ses-C-complexes}
 For each $i=1,\cdots,d-1$, there exists a short exact sequence of complexes
 \[ 0 \lra C_{\bullet}(w_0,\cdots,w_{i-1}) \oo C_{\bullet}(w_i,\cdots,w_d) \overset{\iota}{\lra} C_{\bullet}(w_0,\cdots,w_d) \overset{\pi}{\lra} C_{\bullet-1}(w_0,\cdots,w_{i-1}+w_i,\cdots,w_d)\lra 0 \]
\end{lemma}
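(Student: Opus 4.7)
The plan is to write $\iota$ and $\pi$ by explicit formulas on the basis $\{f_J\}$ of $C_\bullet(\ul{w})$, and then verify exactness together with the chain map property. The starting combinatorial observation is that subsets $J \subseteq [d]$ partition into two classes according to whether $i \in J$. Those with $i \notin J$ are in bijection with pairs $(J_1, \hat J_2)$ with $J_1 \subseteq \{1,\dots,i-1\}$ and $\hat J_2 \subseteq \{1,\dots,d-i\}$: edge $i$ is absent in $\Gamma_J$, so the graph splits into a left piece on vertices $\{0,\dots,i-1\}$ (with edge set $J_1$) and a right piece on vertices $\{i,\dots,d\}$ (whose edges, after the shift $k \mapsto k+i$, are indexed by $\hat J_2$). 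Those with $i \in J$ are in bijection with subsets $\tilde J \subseteq \{1,\dots,d-1\}$: edge $i$ is present in $\Gamma_J$, so vertices $i-1$ and $i$ lie in a common interval whose total weight contains $w_{i-1}+w_i$, matching an interval of $\Gamma_{\tilde J}$ on the path with merged weights $(w_0,\dots,w_{i-1}+w_i,\dots,w_d)$.

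On basis elements one then defines
\[ \iota(f_{J_1} \oo f_{\hat J_2}) = (-1)^{i\cdot|\hat J_2|}\, f_{J_1 \cup (\hat J_2 + i)}, \qquad \pi(f_J) = \begin{cases} f_{\tilde J} & \text{if } i \in J, \\ 0 & \text{if } i \notin J, \end{cases} \]
where $\hat J_2 + i := \{k + i : k \in \hat J_2\}$. Exactness as a sequence of graded vector spaces in every homological degree is then immediate from the bijections above: the image of $\iota$ is the span of $\{f_J : i \notin J\}$, which is precisely $\ker(\pi)$.

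The core task is verifying that $\iota$ and $\pi$ are chain maps. For $\pi$, the point is that whenever $i \in J$ the interval structures of $\Gamma_J$ on $\{0,\dots,d\}$ and of $\Gamma_{\tilde J}$ on $\{0,\dots,d-1\}$ correspond canonically under the natural bijection of intervals, so for every $j \in J \setminus \{i\}$ both the index $s$ from \eqref{eq:bin-coe-delJJ'} and the associated binomial coefficient (computed in the merged graph using $w_{i-1}+w_i$ at the merged vertex) agree on the two sides of $\pi \circ \partial = \partial \circ \pi$. The only remaining contribution to $\partial f_J$ comes from removing the edge $i$, which produces $f_{J \setminus \{i\}}$ with $i \notin J \setminus \{i\}$, hence automatically in $\ker(\pi)$.

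For $\iota$, one compares $\iota \circ \partial^{\otimes}$ and $\partial \circ \iota$ term by term. The binomial coefficients match because $\Gamma_{J_1 \cup (\hat J_2 + i)}$ is the disjoint union of $\Gamma_{J_1}$ and of the shifted copy of $\Gamma_{\hat J_2}$. The subtlety is the signs: when removing an edge belonging to $\hat J_2 + i$, the index of the split interval inside $\Gamma_{J_1 \cup (\hat J_2 + i)}$ exceeds the corresponding index inside $\Gamma_{\hat J_2}$ by exactly $i - |J_1|$, namely the number of intervals of $\Gamma_{J_1}$ sitting entirely to the left of vertex $i$. Combined with the Koszul sign $(-1)^{|J_1|}$ coming from the tensor product differential, this yields a sign imbalance of $(-1)^i$ per edge in $\hat J_2$, which is precisely what the factor $(-1)^{i\cdot|\hat J_2|}$ in the definition of $\iota$ cancels. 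The main obstacle is this careful sign bookkeeping; no deeper ingredient is needed beyond the explicit formulas.
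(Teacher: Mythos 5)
Your proof is correct and takes essentially the same route as the paper: split the basis of $C_{\bullet}(\ul{w})$ according to whether the edge $i$ belongs to $J$, define $\iota$ and $\pi$ on basis elements, note that degreewise exactness is immediate from this bijection, and check the chain-map property by comparing the index $s$ and the binomial coefficients in \eqref{eq:bin-coe-delJJ'}. The only difference is normalization: the paper takes $\iota(f_{J_1}\oo f_{J_2})=f_{J_1\cup(J_2+i)}$ and $\pi(f_J)=-f_{\epsilon(J)}$ and leaves the verification to the reader, while you twist $\iota$ by $(-1)^{i\cdot|J_2|}$ so that it commutes with the standard Koszul tensor differential — a harmless (indeed more careful) piece of sign bookkeeping for the same argument.
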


\begin{proof} Based on Figure~\ref{fig:Cw-complex}, one can think of $C_{\bullet}(w_0,\cdots,w_{i-1}) \oo C_{\bullet}(w_i,\cdots,w_d)$ as the subcomplex of $C_{\bullet}(\ul{w})$ corresponding to the graphs $\Gamma_J$ where the edge labelled by $i$ is missing ($i\not\in J$). Similarly, the terms of the complex $C_{\bullet-1}(w_0,\cdots,w_{i-1}+w_i,\cdots,w_d)$ correspond to graphs $\Gamma_J$ with $i\in J$: we then think of the nodes of weights $w_{i-1},w_i$ together with the connecting edge as a single node of weight $w_{i-1}+w_i$.

To be precise, if $J_1\subset[i-1]$ and $J_2\subset[d-i]$, and if we write $J_2+i = \{j+i: j\in J_2\}$, then 
\[ \iota(f_{J_1} \oo f_{J_2}) = f_J,\quad\text{where }J = J_1 \cup (J_2+i).\]
If $J\subset[d]$ with $i\in J$, we write 
\begin{equation}\label{eq:eps-J}
\epsilon(J) = \{j| j\in J,j<i\} \cup \{j-1 | j\in J,j>i\},
\end{equation}
and define
\[ \pi(f_J) = 0\quad\text{ if }i\not\in J,\text{ and }\quad \pi(f_J) = -f_{\epsilon(J)}\quad\text{ if }i\in J.\]
It is clear that in each homological degree we get an exact sequence of vector spaces, and we leave it to the reader to check that $\iota$ and $\pi$ are indeed maps of complexes.
\end{proof}

It will be important to understand more explicitly the connecting homomorphisms
\begin{equation}\label{eq:deltak-ses-Cw}
    \delta_k:H_k(C_{\bullet-1}(w_0,\cdots,w_{i-1}+w_i,\cdots,w_d)) \lra H_{k-1}(C_{\bullet}(w_0,\cdots,w_{i-1}) \oo C_{\bullet}(w_i,\cdots,w_d))
\end{equation}
in the long exact sequence in homology associated to the short exact sequence in Lemma~\ref{lem:ses-C-complexes}.

\begin{lemma}\label{lem:connecting-map-Cw}
    There is a morphism of complexes
    \[ \psi : C_{\bullet}(w_0,\cdots,w_{i-1}+w_i,\cdots,w_d) \lra C_{\bullet}(w_0,\cdots,w_{i-1}) \oo C_{\bullet}(w_i,\cdots,w_d),\]
    such that the induced maps in homology $H_{k-1}(\psi)$ agree with the connecting homomorphisms $\delta_k$ in \eqref{eq:deltak-ses-Cw}. More precisely, suppose $J'\subset[d-1]$ and let $t = |J'\cap[i-1]|$ denote the number of edges in $\Gamma_{J'}$ to the left of the vertex $v$ of weight $w_{i-1}+w_i$. Suppose also that $l\leq i-1$ and $r\geq i$ are such that the weight of the connected component of $\Gamma_{J'}$ containing $v$ is 
    \[w_l+\cdots+w_{i-1}+w_i+\cdots + w_r.\]
    If $J\subset[d]$ is the unique subset with $i\in J$ and $\epsilon(J)=J'$, and if we write $J\setminus\{i\}=J_1 \cup (J_2+i)$ for $J_1\subset[i-1]$ and $J_2\subset[d-i]$, then
    \[ \psi(f_{J'}) = (-1)^{t-1} {w_l+\cdots+w_{i-1}+w_i+\cdots + w_r \choose w_l+\cdots+w_{i-1}} \cdot f_{J_1}\oo f_{J_2}.\]
\end{lemma}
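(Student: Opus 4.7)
The plan is to identify $\delta_k$ with $H_{k-1}(\psi)$ via a direct application of the snake lemma to the short exact sequence of Lemma~\ref{lem:ses-C-complexes}. Given a cycle $f_{J'}\in E_k := C_{k-1}(w_0,\dots,w_{i-1}+w_i,\dots,w_d)$, the map $\pi$ admits a canonical choice of lift: if $J=J_1\cup\{i\}\cup(J_2+i)\subseteq[d]$ is the unique subset with $i\in J$ and $\epsilon(J)=J'$, then $\pi(-f_J)=f_{J'}$, so I would take $y=-f_J$, giving $\delta_k[f_{J'}]=[\iota^{-1}(\partial y)]$.

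Next I would decompose $\partial y=-\sum_{j\in J}\partial_{J,J\setminus\{j\}}(f_J)$ according to which edge is removed. The summand with $j=i$ already lies in the image of $\iota$, since $J\setminus\{i\}=J_1\cup(J_2+i)$ does not contain $i$, and equals $\iota\bigl((-1)^{s_i+1}\binom{w^J_{s_i}}{w^{J\setminus\{i\}}_{s_i}}\,f_{J_1}\otimes f_{J_2}\bigr)$. The interval of $\Gamma_J$ containing edge $i$ spans vertices $\{l,l+1,\dots,r\}$, has weight $w_l+\cdots+w_r$, and splits under removal of edge $i$ into intervals of weights $w_l+\cdots+w_{i-1}$ and $w_i+\cdots+w_r$; its position in the tuple $\ul{w}^J$ is $s_i=(i-1)-t$, obtained by counting edges of $[i-1]$ missing from $J$. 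Combining the sign $(-1)^{s_i}$ from \eqref{eq:pd-JJ'-on-F}, the overall minus from $y=-f_J$, the shift sign from the convention $E_\bullet=C_{\bullet-1}(\cdots)$, and the binomial extracted via Corollary~\ref{cor:multOmega-iso-cohom}(ii), I recover exactly the scalar $(-1)^{t-1}\binom{w_l+\cdots+w_r}{w_l+\cdots+w_{i-1}}$ displayed in the lemma. The remaining summands with $j\neq i$ still contain $i$ and are not individually in $\operatorname{im}\iota$; however their total contribution lies in $\ker\pi=\operatorname{im}\iota$ since $\pi(\partial y)=-\partial^E f_{J'}=0$, and it represents a boundary in $C_\bullet(w_0,\dots,w_{i-1})\otimes C_\bullet(w_i,\dots,w_d)$, hence vanishes in homology.

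Finally, to promote this computation to a morphism of complexes $\psi$ defined on every basis element (not only on cycles), I would declare $\psi(f_{J'})$ by the displayed formula and verify $\partial\circ\psi=\psi\circ\partial$ by direct case analysis on the position of the removed edge in $\Gamma_{J'}$ relative to the interval containing the merged vertex $v$: removed edges strictly outside $[l,r]$ contribute term-wise matching summands on the two sides, while the edges bounding $v$ combine the splitting of the merged weight $w_l+\cdots+w_r$ with a further splitting, the agreement of which rests on the Pascal-type trinomial identity $\binom{a+b+c}{b+c}\binom{b+c}{b}=\binom{a+b+c}{b}\binom{a+c}{a}$. The principal technical obstacle is the sign bookkeeping: the parity of $s$ shifts by $\pm 1$ as one moves between $\Gamma_J$ and $\Gamma_{J'}$, and the parity of $t$ changes under edge removals on either side of $v$, so both must be tracked carefully to match the shift convention of $C_{\bullet-1}(\cdots)$.
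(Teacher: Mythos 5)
Your route is the same as the paper's: the paper also computes $\delta_k$ by lifting along the section $s(f_{J'})=-f_J$ of $\pi$ from Lemma~\ref{lem:ses-C-complexes}, applying the differential of $C_\bullet(\ul{w})$, and reading off the unique summand not containing the edge $i$; your additional sketch of the chain-map property of $\psi$ fills in a verification the paper leaves implicit.

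One step of your chase is justified incorrectly, however. For a single basis element $f_{J'}$ one has $\partial^E f_{J'}\neq 0$ in general, so the assertion ``$\pi(\partial y)=-\partial^E f_{J'}=0$'' fails; and the summands $\partial_{J,J\setminus\{j\}}(f_J)$ with $j\neq i$ are multiples of $f_{J\setminus\{j\}}$ with $i\in J\setminus\{j\}$, hence lie outside $\operatorname{im}\iota$ and cannot ``represent a boundary in $C_\bullet(w_0,\dots,w_{i-1})\otimes C_\bullet(w_i,\dots,w_d)$,'' nor is any boundary argument available. The correct (and standard) statement, which is what the paper's chase uses, is that one works with an honest cycle $z=\sum c_{J'}f_{J'}$: then $\pi(\partial s(z))=\partial^E(\pi s(z))=\partial^E z=0$ forces all components of $\partial s(z)$ along basis vectors whose index contains $i$ to cancel identically, and what survives is exactly $\iota$ applied to the sum of the $j=i$ contributions, i.e.\ $\iota(\psi(z))$; no homological vanishing of leftover terms is needed. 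Two smaller points: the binomial coefficient is already built into the differential \eqref{eq:bin-coe-delJJ'} of the complexes $C_\bullet$ (Corollary~\ref{cor:multOmega-iso-cohom}(ii) was consumed back when $C_\bullet$ was constructed, so it is not invoked again here); and a literal combination of your listed ingredients gives the coefficient $(-1)^{i-t}\binom{w_l+\cdots+w_r}{w_l+\cdots+w_{i-1}}$, since $s_i=i-1-t$ and the section contributes one further sign, which matches the displayed $(-1)^{t-1}$ only up to the global factor $(-1)^{i+1}$ governed by the shift convention for $C_{\bullet-1}$. That discrepancy is harmless for every use of the lemma (only the vanishing of the binomial mod $p$ is ever used), but your appeal to an unspecified ``shift sign'' should be made explicit rather than left as a black box.
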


\begin{proof} We define a section $s$ of the map $\pi$ in Lemma~\ref{lem:ses-C-complexes} by letting $s(f_{J'})=-f_J$, where $i\in J\subset[d]$ is unique with the property $\epsilon(J)=J'$. To construct the connecting maps \eqref{eq:deltak-ses-Cw}, we take any cycle $z$ in $C_{k-1}(w_0,\cdots,w_{i-1}+w_i,\cdots,w_d)$, we lift it to $s(z)\in C_{k}(w_0,\cdots,w_{i-1},w_i,\cdots,w_d)$ and apply the differential to get $\delta(s(z))\in C_{k-1}(w_0,\cdots,w_{i-1},w_i,\cdots,w_d)$. We have $\pi(\delta(s(z)))=0$, hence $\delta(s(z))=\iota(z')$ for some $z'$. The map $\delta_k$ sends the homology class of $z$ to that of $z'$. Tracing through the construction, we see that this agrees with the map in homology induced by $\psi$.
\end{proof}

We define $P(\ul{w})$ to be the generating function for the homology of $C_{\bullet}(\ul{w})$:
\[P(\ul{w}) = \sum_{i\geq 0} h_i(C_{\bullet}(\ul{w}))\cdot t^i.\]

\begin{proposition}\label{prop:props-PCw}
    \begin{enumerate}
        \item If $\ll/\mu$ is the ribbon corresponding to the composition $\ul{w}$ and if $|\ul{w}|=N$ then
        \[ P(\ul{w}) = \sum_{i=0}^d h^{N-i}_{st}(\bb{S}_{\ll/\mu}\Omega)\cdot t^i.\]
        \item If $P(w_0,\cdots,w_{i-1})=0$ or $P(w_i,\cdots,w_d)=0$ then 
        \[P(\ul{w}) = t\cdot P(w_0,\cdots,w_{i-1}+w_i,\cdots,w_d).\]
        \item If $\chr(\kk)=p$ then $P(\ul{w}) = P(p\cdot\ul{w})$.
        \item If $\chr(\kk)=p$ and $p^k>w_0+\cdots+w_{d-1}$ then $P(\ul{w}) = P(w_0,\cdots,w_{d-1},w_d+p^k)$.
    \end{enumerate}
\end{proposition}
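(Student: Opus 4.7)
Part (1) is immediate from Theorem~\ref{thm:stab-coh-ribbon=hom-Cw}: re-indexing the identity $H^j_{st}(\bb{S}_{\ll/\mu}\Omega) = H_{N-j}(C_\bullet(\ul{w}))$ by setting $i = N-j$ gives the generating function formula directly.

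For (2), the plan is to apply the long exact sequence in homology to the short exact sequence of complexes from Lemma~\ref{lem:ses-C-complexes}. Since Künneth over the field $\kk$ gives $H_*(A \oo B) = H_*(A) \oo H_*(B)$, the hypothesis makes the left-hand tensor product acyclic. The long exact sequence then produces isomorphisms
\[H_k\bigl(C_\bullet(\ul{w})\bigr) \simeq H_k\bigl(C_{\bullet-1}(w')\bigr) = H_{k-1}\bigl(C_\bullet(w')\bigr),\]
where $w' = (w_0,\ldots,w_{i-1}+w_i,\ldots,w_d)$, and the identity for generating functions follows at once.

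The substance of the proposition lies in parts (3) and (4), whose strategy is to show that after reduction modulo $p$ the relevant complexes of vector spaces literally coincide with $C_\bullet(\ul{w})$. Since by \eqref{eq:bin-coe-delJJ'} the differentials are signed binomial coefficients whose signs depend only on the combinatorics of the indexing subsets (and not on $\ul{w}$), it suffices to verify two auxiliary congruences: (a) $\binom{pa}{pb} \equiv \binom{a}{b} \pmod p$ for all integers $a$ and all $b \geq 0$; and (b) $\binom{x+p^k}{b} \equiv \binom{x}{b} \pmod p$ for all integers $x$ and all $0 \leq b < p^k$. For (a) I would use that $(1+X)^p \equiv 1 + X^p \pmod p$ in $\mathbb{Z}[[X]]$, so $(1+X)^{pa} \equiv (1+X^p)^a \pmod p$ (both sides being well-defined formal power series of constant term $1$, valid even for $a<0$); extracting the coefficient of $X^{pb}$ yields the identity. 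For (b) I would expand via Vandermonde as $\sum_j \binom{x}{b-j}\binom{p^k}{j}$ and observe that $\binom{p^k}{j} \equiv 0 \pmod p$ for $0 < j < p^k$, leaving only the $j=0$ term.

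With these congruences in hand, part (3) is immediate since every binomial in $C_\bullet(p\ul{w})$ has the form $\binom{p\cdot(w_l+\cdots+w_r)}{p\cdot(w_l+\cdots+w_v)}$, which matches the corresponding entry of $C_\bullet(\ul{w})$ modulo $p$ by (a). For (4), the bottom of every binomial in \eqref{eq:bin-coe-delJJ'} is the weight $w_l+\cdots+w_v$ of the \emph{left} piece of an interval split, which forces $v < d$ whenever position $d$ lies in the interval; in that case the bottom is bounded by $w_0+\cdots+w_{d-1} < p^k$, so (b) applies to show that $C_\bullet(w_0,\ldots,w_{d-1}, w_d+p^k)$ and $C_\bullet(\ul{w})$ have identical differentials modulo $p$. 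The main point to keep track of is that $w_d$ may be negative (per Remark~\ref{rem:negative-wd}), so the binomials must be interpreted via the convention \eqref{eq:x-choose-i}; fortunately both congruences (a) and (b) extend verbatim to arbitrary integer inputs in the top slot, so this poses no genuine obstacle.
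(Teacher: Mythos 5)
Your proposal is correct and takes essentially the same route as the paper: part (1) from Theorem~\ref{thm:stab-coh-ribbon=hom-Cw}, part (2) from the short exact sequence of Lemma~\ref{lem:ses-C-complexes} with the acyclicity of the tensor factor making $\pi$ a quasi-isomorphism, and parts (3)--(4) by showing the complexes literally coincide over $\kk$ via Lucas-type binomial congruences valid for arbitrary integer tops, with the bound $w_0+\cdots+w_{d-1}<p^k$ controlling the bottoms exactly as needed. The only cosmetic difference is that for $\binom{x+p^k}{b}=\binom{x}{b}$ when $b<p^k$ the paper extracts the coefficient of $x^b$ from $(1+x)^{m+p^k}=(1+x)^m\cdot(1+x^{p^k})$ in $\kk[[x]]$ instead of invoking Vandermonde, which amounts to the same computation.
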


\begin{proof}
    Part (1) follows from Theorem~\ref{thm:stab-coh-ribbon=hom-Cw}. For part (2), the hypothesis implies that one of the complexes $C_{\bullet}(w_0,\cdots,w_{i-1})$ or $C_{\bullet}(w_i,\cdots,w_d)$ is exact, hence the same is true for $C_{\bullet}(w_0,\cdots,w_{i-1}) \oo C_{\bullet}(w_i,\cdots,w_d)$. It follows that the map $\pi$ in Lemma~\ref{lem:ses-C-complexes} is a quasi-isomorphism, which yields the desired conclusion.

    Parts (3), (4) are familiar consequences of Lucas' theorem \cite{lucas}*{Chapter XXIII, Section 228}. Since this is usually stated for non-negative entries in the binomial coefficients, we include a short proof. With notation as in \eqref{eq:x-choose-i}, we have for all $m\in\bb{Z}$ the power series identity
    \[(1+x)^m = \sum_{k\geq 0} {m\choose k}\cdot x^k \in \bb{Z}[[x]].\]
    Using the universal map $\bb{Z}\lra\kk$, and the identity $(1+x)^{pm}=(1+x^p)^m$ in $\kk[[x]]$, we get by considering the coefficient of $x^{pv}$ that 
    \[ {pm\choose pv} = {m\choose v} \quad\text{in }\kk\text{ for all }m\in\bb{Z},v\geq 0.\]
    This implies that $C_{\bullet}(\ul{w})=C_{\bullet}(p\cdot\ul{w})$ are equal as complexes of $\kk$-vector spaces, which proves (3). Using
    \[ (1+x)^{m+p^k} = (1+x)^m\cdot (1+x)^{p^k} = (1+x)^m\cdot (1+x^{p^k}) \quad\text{in }\kk[[x]],\]
    it follows by considering the coefficient of $x^v$ that ${m\choose v}={m+p^k\choose v}$ whenever $p^k>v$. This implies the equality of complexes of $\kk$-vector spaces $C_{\bullet}(\ul{w})=C_{\bullet}(w_0,\cdots,w_{d-1},w_d+p^k)$, proving~(4).
\end{proof}

\begin{corollary}\label{cor:vanish-hooks+group-p}
    \begin{enumerate}
        \item If $b\geq 1$ and $j\not\equiv 0\ (\textrm{mod }p)$ then $P(pb,1^j)=0$.
        \item For all $a\geq 0$, $b\geq 1$ we have
        \[P(pb,1^{pa}) = t^{a(p-1)} \cdot P(pb,p^a) = t^{a(p-1)} \cdot P(b,1^a).\]
    \end{enumerate}
\end{corollary}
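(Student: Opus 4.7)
My plan is to prove both parts by induction, using Proposition~\ref{prop:props-PCw}(2) as the main engine. The base case $P(pb,1)=0$ follows because $C_{\bullet}(pb,1)$ is a $2$-term complex with differential $\binom{pb+1}{pb}=pb+1\equiv 1\pmod{p}$, hence nonzero. The second equality $P(pb,p^a)=P(b,1^a)$ in part~(2) is immediate from Proposition~\ref{prop:props-PCw}(3) applied to $\ul{w}=(b,1^a)$.

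To prove part~(1), I apply Proposition~\ref{prop:props-PCw}(2) to $(pb,1^j)$ with splitting index $i=2$ repeatedly, each time combining the leading residual entry $k$ with the next $1$. Each such step uses the left-side vanishing $P(pb,k)=0$ for $k<p$, which holds by Lucas' theorem since $\binom{pb+k}{k}\equiv\binom{b}{0}\binom{k}{k}=1\pmod{p}$. For $1\le j<p$ the iteration terminates at $(pb,j)$ with $P(pb,j)=0$, again by Lucas, yielding $P(pb,1^j)=0$. For $j>p$ with $p\nmid j$, after $p-1$ steps one reaches $(pb,p,1^{j-p})$ with factor $t^{p-1}$; a further application of Proposition~\ref{prop:props-PCw}(2) at $i=1$ uses the right-side vanishing $P(p,1^{j-p})=0$, valid by the inductive hypothesis of~(1) since $p\nmid j$ forces $p\nmid j-p$. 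The resulting $P(p(b+1),1^{j-p})$ vanishes by another appeal to~(1).

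For part~(2) I first establish by induction on $m$ the generalized vanishing $P(pb,p^m,1^n)=0$ whenever $p\nmid n$: the base $m=0$ is part~(1), and the step applies Proposition~\ref{prop:props-PCw}(2) at $i=1$ with right-side vanishing $P(p^m,1^n)=0$ (the same generalized statement for $b'=1$, $m-1$) to deduce $P(pb,p^m,1^n)=t\cdot P(p(b+1),p^{m-1},1^n)=0$ by induction. A corollary, proved by induction on $k$ via the identity $P(pb,p^m,1^k)=t^{k-1}P(pb,p^m,k)$ obtained by iterating Proposition~\ref{prop:props-PCw}(2), is that $P(pb,p^m,k)=0$ for $1\le k<p$. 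The proof of (2) now proceeds block-by-block: starting from $(pb,1^{pa})$, block $\ell+1$ is processed by $p-1$ successive applications of Proposition~\ref{prop:props-PCw}(2) at position $\ell+2$, each using the left-side vanishing $P(pb,p^\ell,k)=0$ from the corollary, transforming $(pb,p^\ell,1^{p(a-\ell)})$ into $(pb,p^{\ell+1},1^{p(a-\ell-1)})$ with factor $t^{p-1}$. After all $a$ blocks we arrive at $(pb,p^a)$ with total factor $t^{a(p-1)}$.

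The main conceptual hurdle will be recognizing that part~(2) requires the generalized vanishing $P(pb,p^m,1^n)=0$ with $m\ge 1$ in addition to part~(1) itself; without this strengthening, the block-by-block iteration for~(2) stalls after the first block. Once the generalized vanishing and its corollary on $P(pb,p^m,k)$ are established by parallel iterative arguments, the remainder is clean bookkeeping of the accumulated $t$-factors.
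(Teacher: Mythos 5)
Your argument is correct, but it takes a genuinely different route from the paper in both parts. For part (1) the paper does not argue inside the arithmetic complexes at all: it notes that $(pb,1^j)$ is the composition of the hook $\ll=(j+1,1^{pb-1})$, whose $p$-core has first part at least $2$ when $p\nmid j$, and then quotes the representation-theoretic vanishing of Theorem~\ref{thm:vanishing-p-core} together with Proposition~\ref{prop:props-PCw}(1). Your replacement --- the Lucas computation $\binom{pb+k}{k}\equiv 1\ (\mathrm{mod}\ p)$ for $1\le k<p$, so that the two-term complexes $C_{\bullet}(pb,k)$ are exact, combined with repeated use of Proposition~\ref{prop:props-PCw}(2) and induction on $j$ --- is an elementary, purely combinatorial proof that stays entirely within Section~\ref{sec:arithmetic}; it gives up the conceptual explanation via $p$-cores but buys independence from Theorems~\ref{thm:Weyl-Omega} and~\ref{thm:vanishing-p-core}. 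For part (2) both you and the paper iterate Proposition~\ref{prop:props-PCw}(2), but the paper merges the trailing $1$'s into $p$-blocks that accumulate at the \emph{right} end of the tuple, so every prefix whose vanishing is invoked has the form $(pb,1^{i-1})$ with $p\nmid i-1$, and part (1) alone suffices; you instead grow the $p$-blocks immediately after the entry $pb$, which is exactly why you need the auxiliary statements $P(pb,p^m,1^n)=0$ for $p\nmid n$ and $P(pb,p^m,k)=0$ for $1\le k<p$. Your closing remark that this strengthening is unavoidable is therefore not accurate --- it is an artifact of the order in which you merge --- but your auxiliary lemmas are correctly proved (the induction on $m$, uniform in $b$ and $n$, is sound, and positivity of the coefficients of $P$ justifies cancelling the factor $t^{k-1}$), so the extra bookkeeping does yield a valid, if slightly longer, proof of the same identity.
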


\begin{proof} To prove part (1), we note that the ribbon corresponding to the composition $\ul{w}=(pb,1^j)$ is the hook partition $\ll=(j+1,1^{pb-1})$. Its $p$-core is $\gamma=\op{core}_p(\ll)=(j'+1,p-1)$ where $1\leq j'<p$ is the remainder of division of $j$ by $p$. Since $\gamma_1\geq 2$, Theorem~\ref{thm:vanishing-p-core} proves $h_{st}^j(\bb{S}_{\ll}\Omega)=0$ for all $j$. We then get $P(pb,1^j)=0$ using Proposition~\ref{prop:props-PCw}(1).

    For part (2), the second equality follows from Proposition~\ref{prop:props-PCw}(3), so we focus on the first equality. Iterating Proposition~\ref{prop:props-PCw}(2) for $i=pa,pa-1,\cdots,pa-p+2$ and using $P(pb,1^{i-1})=0$, we get
    \[ P(pb,1^{pa}) = t\cdot P(pb,1^{pa-2},2) = t^2\cdot P(pb,1^{pa-3},3) = \cdots = t^{p-1}\cdot P(pb,1^{p(a-1)},p).\]
    Iterating this with $i=p(a-r),p(a-r)-1,\cdots,p(a-r-1)+2$ for each $r=1,\cdots,a-1$, we get 
    \[P(pb,1^{p(a-r)},p^r) = t\cdot P(pb,1^{p(a-r)-2},2,p^r) = t^2\cdot P(pb,1^{p(a-r)-3},3,p^r) = \cdots = t^{p-1}\cdot P(pb,1^{p(a-r-1)},p^{r+1}),\]
    which implies $P(pb,1^{pa}) = t^{(p-1)r}\cdot P(pb,1^{p(a-r)},p^r)$ for each $r=1,\cdots,a$, and in particular for $r=a$ we obtain the desired identity.
\end{proof}

A natural variation on the complexes $C_{\bullet}(\ul{w})$ comes by considering a fixed subset $A\subset [d]$ and defining $C_{\bullet}^A(\ul{w})$ to be the subcomplex spanned by those $f_J$ with $A\not\subseteq J$. In Figure~\ref{fig:Cw-complex}, this amounts to removing all the graphs that contain all the edges from $A$. It is clear from the definition of the differential in $C_{\bullet}(\ul{w})$ that each $C_{\bullet}^A(\ul{w})$ is a subcomplex. If we instead restrict to graphs containing a given edge set, then the resulting complexes are naturally identified with quotients of $C_{\bullet}(\ul{w})$. There are however circumstances under which they are also subcomplexes, as seen by the following.

\begin{proposition}\label{prop:Cpa-summand-C1pa}
    There is a natural inclusion of complexes $\varphi:C_{\bullet-a(p-1)}(pb,p^a)\lra C_{\bullet}(pb,1^{pa})$ which is a quasi-isomorphism. More precisely, if we let
    \[ A = \{j\in[pa] | j\not\equiv 1\ (\text{mod }p)\}\]
    and define for $J\subset[a]$ the set
    \[J^{(p)} = \{jp-p+1 | j\in J\} \cup A,\]
    then $\varphi$ is defined via $\varphi(f_J) = f_{J^{(p)}}$.
\end{proposition}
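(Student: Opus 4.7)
The plan is to establish in two steps that (i) $\varphi$ is a well-defined chain map whose image is a direct-summand subcomplex of $C_\bullet(pb, 1^{pa})$, and (ii) $\varphi$ is a quasi-isomorphism via a Poincar\'e polynomial count leveraging Corollary~\ref{cor:vanish-hooks+group-p}(2).

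For step (i), I would first verify that the intervals of the graph $\Gamma_{J^{(p)}}$ on $pa+1$ vertices with weights $(pb, 1^{pa})$ correspond bijectively, with matching weights, to the intervals of $\Gamma_J$ on $a+1$ vertices with weights $(pb, p^a)$. Since $A \subseteq J^{(p)}$, the $p-1$ consecutive ``internal'' edges $(j-1)p+2, \ldots, jp$ merge each block $\{(j-1)p+1, \ldots, jp\}$ into a single connected piece of weight $p$, and the only remaining freedom in $J^{(p)}$ is the choice of which ``gap'' edges $jp - p + 1$ to include, matching the choice of $J \subseteq [a]$. Next, one checks the differentials by considering $d(f_{J^{(p)}})$ term by term. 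If we remove an edge $k \in A$, say $k = (j-1)p + i$ with $2 \leq i \leq p$, then the ambient interval has weight $w$ that is either a multiple of $p$ or of the form $pb + (\text{mult.\ of } p)$, while the left piece has weight $w_L$ with last base-$p$ digit $i - 1 \in \{1, \ldots, p-1\}$; Lucas' theorem therefore gives $\binom{w}{w_L} \equiv 0 \pmod{p}$. If instead $k = jp - p + 1$ is a gap edge with $j \in J$, then both $w$ and $w_L$ are of the required divisible form, and Lucas' theorem identifies $\binom{w}{w_L}$ with the binomial coefficient appearing in the differential of $C_\bullet(pb, p^a)$ for the transition $J \to J \setminus \{j\}$. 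The signs also match, because under the bijection of intervals, the position of the interval being split is preserved.

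These observations prove simultaneously that $\varphi$ is a chain map (the surviving ``gap'' terms correspond to $\varphi(d(f_J))$) and that its image is closed under the differential (the ``internal'' terms, which would escape the image, are annihilated). Together with the obvious closure of $C^A_\bullet(pb, 1^{pa})$ under the differential (since $A \not\subseteq J$ forces $A \not\subseteq J \setminus \{k\}$), one obtains a direct-sum decomposition of chain complexes
\[ C_\bullet(pb, 1^{pa}) \;=\; \operatorname{Image}(\varphi) \;\oplus\; C^A_\bullet(pb, 1^{pa}). \]
Taking Poincar\'e polynomials yields
\[ P(pb, 1^{pa}) \;=\; t^{a(p-1)} \cdot P(pb, p^a) \;+\; P\!\left(C^A_\bullet(pb, 1^{pa})\right), \]
while Corollary~\ref{cor:vanish-hooks+group-p}(2) asserts $P(pb, 1^{pa}) = t^{a(p-1)} \cdot P(pb, p^a)$, forcing $P(C^A_\bullet(pb, 1^{pa})) = 0$. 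Hence $C^A_\bullet$ is acyclic, and the injection $\varphi$ induces isomorphisms on homology in each degree.

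The main obstacle is the bookkeeping in step (i): tracking interval weights on both sides and verifying the two clean Lucas computations (vanishing for $k \in A$ and matching for gap edges $k$), together with the sign conventions from \eqref{eq:bin-coe-delJJ'}. Once this is established, the quasi-isomorphism follows essentially formally from the Poincar\'e polynomial identity already proved in Corollary~\ref{cor:vanish-hooks+group-p}(2).
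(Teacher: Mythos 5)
Your proposal is correct and follows essentially the same route as the paper: check that the component weights of $\Gamma_{J^{(p)}}$ match those of $\Gamma_J$, kill the terms coming from edges in $A$ by a Lucas-type vanishing of $\binom{w}{w_L}$ with $w\equiv 0$ and $w_L\not\equiv 0 \pmod p$, observe that $\operatorname{Im}(\varphi)\oplus C^A_\bullet(pb,1^{pa})$ is a direct-sum decomposition of complexes, and then upgrade the resulting injection on homology to an isomorphism using the dimension count of Corollary~\ref{cor:vanish-hooks+group-p}(2). The only cosmetic differences are that you phrase the last step as acyclicity of $C^A_\bullet$ rather than ``injective plus equal dimensions,'' and that your appeal to Lucas for the gap-edge coefficients is unnecessary (those binomial coefficients are literally the same integers on both sides, since the component weights coincide).
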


\begin{proof} We first note that $|J^{(p)}|=|J|+a\cdot(p-1)$, so the map $\varphi$ preserves homological degrees. We need to check that $\varphi$ commutes with the differential. We have for all $J\subset[a]$ that $\ul{w}(J)=\ul{w}(J^{(p)})$, that is, the weights of the connected components of $\Gamma_J$ and $\Gamma_{J^{(p)}}$ coincide, and in particular they are all divisible by $p$. If we write $j^{(p)}=jp-p+1$ for $j\in J$ then $(J\setminus\{j\})^{(p)} = J^{(p)}\setminus \{j^{(p)}\}$ and it is clear that $\pd_{J,J\setminus\{j\}} = \pd_{J^{(p)},J^{(p)}\setminus \{j^{(p)}\}}$ for all $J\subset[a]$ and all $j\in J$. To conclude that $\varphi$ is a map of complexes, it remains to check that $\pd_{J^{(p)},J'}=0$ whenever $J'$ is not of the form $J^{(p)}\setminus\{j^{(p)}\}$. This follows from the fact that the removal of any edge $j'\neq j^{(p)}$ from $J^{(p)}$ will break a component of weight $w^{J^{(p)}}_t \equiv 0\ (\textrm{mod }p)$ into two components of weights $w^{J'}_t$ and $w^{J'}_{t+1}$ which are not divisible by $p$. This implies that the binomial coefficient in \eqref{eq:bin-coe-delJJ'} vanishes as desired.

By construction, we see that the image of $\varphi$ is in fact a direct summand of $C_{\bullet}(pb,1^{pa})$: indeed, we have a decomposition
\[C_{\bullet}(pb,1^{pa}) = \op{Im}(\varphi)\oplus C^A_{\bullet}(pb,1^{pa}),\]
and in particular $\varphi$ induces an injection in homology. To show that it is a quasi-isomorphism, it suffices to prove that we have an abstract isomorphism 
\[H_{i-a(p-1)}\left(C_{\bullet}(pb,p^a)\right)\simeq H_{i}\left(C_{\bullet}(pb,1^{pa})\right),\]
which follows from Corollary~\ref{cor:vanish-hooks+group-p}(2).
\end{proof}

\subsection{Recursion for hook partitions}\label{subsec:recursion-hooks}

We recall \eqref{eq:Hab} and note that in light of \eqref{eq:Flag-to-Pspace} we have an alternative description for the polynomials $H_{a,b}=H_{a,b}(t)$:
\begin{equation}\label{eq:def-Hab}
    H_{a,b}(t)=\sum_{i\geq 0}h^i_{st}(\bb{S}_{(a,1^b)}\Omega)\cdot t^i.
\end{equation}
Our next goal is to explain a simple recursion satisfied by $H_{a,b}$.

\begin{theorem}\label{thm:recursion-hooks}
    The polynomials $H_{a,b}=H_{a,b}(t)$ satisfy the following relations:
    \begin{enumerate}
        \item $H_{0,0}=1$ and $H_{1,b}=t^{b+1}$ for all $b\geq 0$.
        \item If $p\nmid a+b$ then
        \[H_{a,b} = \begin{cases}
            t^b\cdot H_{a,0} & \text{if }a\equiv 1\ (\op{mod}\ p); \\
            0 & \text{otherwise}.
        \end{cases}\]
        \item $H_{a,0}=t\cdot H_{a-1,0}$ if $a\equiv 1\ (\op{mod}\ p)$.
        \item $H_{pa-i,pb+i} = t^i\cdot H_{pa,pb}$ for $i=1,\cdots,p-2$.
        \item $H_{pa+1,pb-1}=t^{pb-b}\cdot H_{a+1,b-1}$.
        \item $H_{pa,pb}= t^{pb+1}\cdot H_{pa-p+1,0}+t^{pb-b}\cdot H_{a,b}$.
    \end{enumerate}
\end{theorem}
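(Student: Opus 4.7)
Via Theorem~\ref{thm:stab-coh-ribbon=hom-Cw} applied to the ribbon $(b+1, 1^{a-1})$ encoding the hook $(a, 1^b)$, one has $H_{a,b}(t) = t^{a+b}\cdot P(b+1, 1^{a-1})(1/t)$, where $P(\ul{w})$ is the Poincar\'e series of the arithmetic complex $C_\bullet(\ul{w})$. Each of the six identities translates into an equivalent relation between Poincar\'e series. Part (1) is immediate from the cohomology of $\mc{O}_{\PP}$ and $\Omega^{b+1}$, and part (5) is the direct translation of Corollary~\ref{cor:vanish-hooks+group-p}(2), which reads $P(pb, 1^{pa}) = t^{a(p-1)}P(b,1^a)$ after bookkeeping of degrees.

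The heart of the argument is part (2). The plan is to establish the vanishing $P(b+1, 1^{a-1}) = 0$ (when $p \nmid a+b$ and $a \not\equiv 1 \pmod{p}$) by combining three ingredients: (i) Theorem~\ref{thm:vanishing-p-core} for hooks whose $p$-core has first row $\geq 2$; (ii) Corollary~\ref{cor:vanish-hooks+group-p}(1) together with Proposition~\ref{prop:props-PCw}(3) to cover the subcase $b+1 \equiv 0 \pmod{p}$, where after rescaling $w_0$ becomes a positive multiple of $p$; and (iii) an induction on $a+b$ via Lemma~\ref{lem:ses-C-complexes} with carefully chosen split points, exploiting the vanishing of binomial coefficients in the differential predicted by Lucas' theorem in the remaining cases.

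With (2) in hand, parts (3) and (4) follow cleanly from short exact sequences and Proposition~\ref{prop:props-PCw}(2). For (3), splitting $C_\bullet(1^a)$ at $i=1$ produces the quotient $C_{\bullet-1}(2, 1^{a-2})$, which corresponds to the hook $(a-1, 1)$ and has vanishing $P$ by (2) when $a \equiv 1 \pmod{p}$; the inclusion $C_\bullet(1^{a-1}) \hookrightarrow C_\bullet(1^a)$ is then a quasi-isomorphism. For (4), iterating splits at $i=1$ on $C_\bullet(pb+j+1, 1^{pa-j-1})$ telescopes, since the sub-complex $C_\bullet(1^{pa-j-1})$ is acyclic for $j = 0, \dots, p-3$ by (2) applied to $\op{Sym}^{pa-j-1}\Omega$. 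For (6), after reducing via (4) to the statement $P(pb+p-1, 1^{pa-p+1}) = P(1^{pa-p+1}) + t^{(a-1)(p-1)+1}P(b+1, 1^{a-1})$, one splits $C_\bullet(pb+p-1, 1^{pa-p+1})$ at $i=1$ and identifies the Poincar\'e series of the quotient $C_{\bullet-1}(pb+p, 1^{pa-p})$ via Corollary~\ref{cor:vanish-hooks+group-p}(2); the identity then reduces to proving that the connecting homomorphism in the associated long exact sequence is zero on homology, a statement to be verified via Lemma~\ref{lem:connecting-map-Cw} together with a case analysis of the binomials $\binom{pb+p+k}{k+1} \pmod{p}$.

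The main obstacle will be step (2): its full strength is not captured by $p$-core considerations alone, and the remaining cases require exploiting the precise $p$-adic structure of the differentials of $C_\bullet(b+1, 1^{a-1})$. A secondary obstacle lies in verifying the vanishing of the connecting map in (6), which is only partial at the chain level and must be upgraded to vanishing in homology.
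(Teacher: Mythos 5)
Your translation to the arithmetic complexes is sound, and parts (1), (3), (4), (5) go through essentially as in the paper (the paper proves (2)--(4) with sheaf-level short exact sequences and K\"unneth rather than with $C_{\bullet}$, but via Lemma~\ref{lem:ses-C-complexes} the two are interchangeable). However, there are two genuine gaps. First, in part (2) you only discuss the vanishing branch and never address the identity $H_{a,b}=t^b\cdot H_{a,0}$ when $a\equiv 1\pmod{p}$, which is half of the statement and needs a real argument: the paper obtains it from the short exact sequence $0\to \bb{S}_{(a,1^b)}\Omega\to \Sym^a\Omega\otimes\bw^b\Omega\to \bb{S}_{(a+1,1^{b-1})}\Omega\to 0$, the vanishing $H_{a+1,b-1}=0$ (the other branch of (2), since $a+1\not\equiv 1$ and $p\nmid(a+1)+(b-1)$), and Theorem~\ref{thm:Kunneth-coh-omega}; in your language one could instead split $C_{\bullet}(b+1,1^{a-1})$ at $i=1$, note that the quotient $C_{\bullet-1}(b+2,1^{a-2})$ (the hook $(a-1,1^{b+1})$) is exact by the vanishing branch, and combine with (3). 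Second, your worry that the vanishing branch is ``not captured by $p$-core considerations alone'' is unfounded, so your ingredients (ii) and (iii) are unnecessary and (iii) is not an argument: for a hook $(a,1^b)$ with $p\nmid a+b$, every rim $p$-hook removal produces another hook, so $\op{core}_p(a,1^b)=(a',1^{b'})$ with $a'\equiv a\pmod{p}$ (and $a'=p$ when $p\mid a$); hence $a'\geq 2$ precisely when $a\not\equiv 1\pmod p$, and Theorem~\ref{thm:vanishing-p-core} alone settles all vanishing cases of (2), exactly as the paper does.

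For part (6) your reduction, via (4) and Corollary~\ref{cor:vanish-hooks+group-p}(2), to the identity $P(pb+p-1,1^{pa-p+1})=P(1^{pa-p+1})+t^{(a-1)(p-1)+1}\,P(b+1,1^{a-1})$ and to the vanishing of the connecting homomorphism is correct, but the vanishing itself is left open, and it cannot be obtained by a chain-level analysis of all binomials $\binom{pb+p+k}{k+1}$, since these are nonzero mod $p$ for general $k$. The missing ingredient is Proposition~\ref{prop:Cpa-summand-C1pa}: every homology class of the quotient $C_{\bullet}(p(b+1),1^{p(a-1)})$ is represented by a combination of the distinguished elements $f_{J^{(p)}}$, for which all component weights of the associated graph are divisible by $p$; Lemma~\ref{lem:connecting-map-Cw} then evaluates the connecting map on such representatives through binomials of the form $\binom{pm}{p(b+1)-1}$, whose bottom entry has last $p$-adic digit $p-1$ while the top has last digit $0$, so they vanish by Lucas. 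This is exactly how the paper kills the connecting maps (it does so for the split of $C_{\bullet}(pb-1,1^{pa+1})$, then substitutes $a\mapsto a-1$, $b\mapsto b+1$); without invoking Proposition~\ref{prop:Cpa-summand-C1pa}, your step (6) does not close.
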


\begin{proof}
 {\bf (1).} If $a=b=0$ then $\bb{S}_{(a,1^b)}\Omega=\mc{O}_{\PP}$ whose only cohomology is $H^0(\PP,\mc{O}_{\PP})=\kk$. Likewise, if $a=1$ then $\bb{S}_{(a,1^b)}\Omega=\Omega^{b+1}$, whose cohomology is described by \eqref{eq:Hd-of-Omegad}. 

 {\bf (2).} If $\ll=(a,1^b)$ and $p\nmid a+b$ then $\op{core}_p(\ll)=(a',1^{b'})$, where $a'=p$ if $p|a$, and $1\leq a'<p$ is the remainder upon division of $a$ by $p$ when $p\nmid a$. If $a'\neq 1$ then $H_{a,b}=0$ by Theorem~\ref{thm:vanishing-p-core}. If $a'=1$ then we use the short exact sequence
 \[0\lra \bb{S}_{(a,1^b)}\Omega \lra \Sym^a\Omega \oo \bw^b \Omega \lra \bb{S}_{(a+1,1^{b-1})}\Omega \lra 0 \]
 and the fact that $H_{a+1,b-1}=0$ to conclude that the stable cohomology of $\bb{S}_{(a,1^b)}\Omega$ agrees with that of $\Sym^a\Omega \oo \bw^b \Omega$. Using the K\"unneth formula (Theorem~\ref{thm:Kunneth-coh-omega}) we conclude that $H_{a,b}=t^b\cdot H_{a,0}$.

 {\bf (3).} Using the short exact sequence
 \[ 0 \lra \bb{S}_{(a-1,1)}\Omega \lra \Sym^{a-1}\Omega \oo \Omega \lra \Sym^a\Omega \lra 0 \]
 and the fact that $H_{a-1,1}=0$ (established in (2)), it follows that $\Sym^{a-1}\Omega \oo \Omega$ and $\Sym^a\Omega$ have the same stable cohomology. K\"unneth's formula then shows that $H_{a,0}=t\cdot H_{a-1,0}$.

 {\bf (4).} It follows from (2) that $H_{pa-i,0}=0$ for $i=1,\cdots,p-2$. Combining this with the short exact sequence
 \[0\lra \bb{S}_{(pa-i,1^{pb+i})}\Omega \lra \Sym^{pa-i}\Omega \oo \bw^{pb+i} \Omega \lra \bb{S}_{(pa-i+1,1^{pb+i-1})}\Omega \lra 0\]
 and the K\"unneth formula, it follows from the long exact sequence in cohomology that $H_{pa-i,pb+i}=t\cdot H_{pa-i+1,pb+i-1}$ for $i=1,\cdots,p-2$, which implies (4). 

 {\bf (5).} Since the partition $\ll=(pa+1,1^{pb-1})$ is the ribbon corresponding to the composition $(pb,1^{pa})$, it follows from Proposition~\ref{prop:props-PCw}(1) and Corollary~\ref{cor:vanish-hooks+group-p}(2) that
 \[ 
 \begin{aligned}    
 H_{pa+1,pb-1} = t^{pa+pb}\cdot P(pb,1^{pa})(t^{-1}) &= t^{a+pb} \cdot P(b,1^a)(t^{-1}) \\
 &= t^{a+pb} \cdot t^{-a-b} \cdot H_{a+1,b-1} = t^{pb-b}\cdot H_{a+1,b-1},
 \end{aligned}\]
which proves (5).

 {\bf (6).} We consider the short exact sequence in Lemma~\ref{lem:ses-C-complexes} with $i=1$, $d=pa+1$, and composition $\ul{w}$ given by $w_0=pb-1$ and $w_1=\cdots=w_d=1$. We have 
 \[C_{\bullet}(w_0,\cdots,w_{i-1}) \oo C_{\bullet}(w_i,\cdots,w_d) = C_{\bullet}(w_0) \oo C_{\bullet}(w_1,\cdots,w_d) \simeq C_{\bullet}(w_1,\cdots,w_d)=C_{\bullet}(1^{pa+1}),\]
 and 
 \[C_{\bullet}(w_0,\cdots,w_{i-1}+w_i,\cdots,w_d) = C_{\bullet}(pb,1^{pa}).\]
 We claim that the connecting homomorphisms in the associated long exact sequence are all zero. To prove that, we use the explicit description from Lemma~\ref{lem:connecting-map-Cw}, together with the fact that by Proposition~\ref{prop:Cpa-summand-C1pa}, every homology class in $C_{\bullet}(pb,1^{pa})$ can be represented by a linear combination of basis elements $f_{J^{(p)}}$. With the notation in Lemma~\ref{lem:connecting-map-Cw}, it suffices to check that $\psi(f_{J^{(p)}})=0$. This follows from the fact that the binomial coefficient appearing in the definition of $\psi$ takes the form
 \[ {pb+pk \choose pb-1}\quad\text{ for some }k,\]
 which vanishes in characteristic $p$. We conclude that
 \[ P(pb-1,1^{pa+1}) = P(1^{pa+1}) + t\cdot P(pb,1^{pa}).\]
 Substituting $t\lra t^{-1}$ and multiplying with $t^{p(a+b)}$ we get
 \[ H_{pa+2,pb-2} = t^{pb-1}\cdot H_{pa+1,0} + t^{-1}\cdot H_{pa+1,pb-1}.\]
 Using parts (4) and (5) we get
 \[ t^{p-2}\cdot H_{p(a+1),p(b-1)} = t^{pb-1}\cdot H_{pa+1,0} + t^{pb-b-1}\cdot H_{a+1,b-1}.\]
 Replacing $a\mapsto a-1$ and $b\mapsto b+1$ and dividing by $t^{p-2}$ yields (6) and concludes our proof.
\end{proof}

\begin{example}
    Let $\lambda = (3,1^3)$ and consider computing the cohomology of $\bbs_\lambda\Omega$ in characteristic $2$. Writing $3 = 2 \cdot 1 + 1$ and $3 = 2 \cdot 2 -1$, we get using Theorem~\ref{thm:recursion-hooks}(5) for $p=2$, $a=1$, $b=2$, and Theorem~\ref{thm:recursion-hooks}(2) that
    $$H_{3,3} = t^2\cdot H_{2,1} = 0.$$
    Notice that the $2$-core of $\ll$ is empty, so the criterion for vanishing in Theorem~\ref{thm:vanishing-p-core} does not apply to $\ll$.
\end{example}

We can apply the recursive formulas in Theorem~\ref{thm:recursion-hooks} to prove the following strange symmetry, that will be used in Section~\ref{subsec:duality-hook-2column} to relate stable cohomology for hook and two-column partitions.

\begin{corollary}\label{cor:strange-symm-Hab}
 If $p^k\geq 2(A+B)$ then
 \[ H_{A,p^k-2A-B} = t^{p^k-2(A+B)}\cdot H_{A,B}.\]
\end{corollary}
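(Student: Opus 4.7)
The plan is to prove the identity $H_{A, B'} = t^{p^k - 2(A+B)} \cdot H_{A, B}$, where $B' := p^k - 2A - B$, by strong induction on $A + B$, with the inductive step guided by a case analysis on the residues of $A$ and $B$ modulo $p$ and the recursions of Theorem~\ref{thm:recursion-hooks}. The base cases $A + B \leq 1$ follow from the explicit formulas in part~(1). A useful preliminary observation is that $A + B' = p^k - (A + B)$, so divisibility of $A + B$ by $p$ is equivalent to divisibility of $A + B'$ by $p$.

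First I handle the case $p \nmid A + B$. By Theorem~\ref{thm:recursion-hooks}(2), both $H_{A, B}$ and $H_{A, B'}$ vanish if $A \not\equiv 1 \pmod p$, making the identity trivial; otherwise they equal explicit $t$-powers of $H_{A, 0}$, and the identity follows from the exponent calculation $B' - B = p^k - 2A - 2B$.

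Now suppose $p \mid A + B$, and split into three sub-cases by $A \bmod p$. If $A \equiv -i \pmod p$ with $i \in \{1, \ldots, p-2\}$, then $B \equiv i \pmod p$; writing $A = pa - i$, $B = pb + i$, and $B' = pb^* + i$ with $b^* = p^{k-1} - 2a - b$, part~(4) factors $t^i$ out of both sides, reducing to the final sub-case with $(A, B) = (pa, pb)$. If $A \equiv 1 \pmod p$, write $A = pa + 1$, $B = pb - 1$, $B' = pb^* - 1$; part~(5) gives $H_{A, B} = t^{(p-1)b} H_{a+1, b-1}$ and $H_{A, B'} = t^{(p-1)b^*} H_{a+1, b^* - 1}$, and an exponent match shows the identity is equivalent to the claim with $(A, B, k) \to (a+1, b-1, k-1)$, whose $A + B = a + b$ is strictly smaller, so the inductive hypothesis applies. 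Finally, if $A = pa$ and $B = pb$, part~(6) expands both $H_{pa, pb}$ and $H_{pa, pb^*}$ as two-term sums; the coefficients of $H_{pa - p + 1, 0}$ match automatically from $pb^* + 1 = p^k - 2pa - pb + 1$, and matching the $H_{a, \cdot}$ coefficients reduces the identity to the claim with $(A, B, k) \to (a, b, k-1)$, again with strictly smaller $A + B$.

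The main obstacle is essentially bookkeeping: in each reduction one must verify that the new parameters remain admissible (in particular $A \geq 1$ and $B, b^* \geq 0$), and that the inequality $p^k \geq 2(A+B)$ correctly descends to $p^{k-1} \geq 2(a' + b')$ for the reduced pair. Since in each reducing sub-case the original $A + B$ equals $p(a+b)$ (up to a bounded correction), this inequality transfers immediately. No deeper conceptual difficulty should arise, as the recursion tree is entirely dictated by the mod-$p$ structure encoded in Theorem~\ref{thm:recursion-hooks}.
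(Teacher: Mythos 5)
Your proposal is essentially the paper's own proof: the same case split (using Theorem~\ref{thm:recursion-hooks}(2) when $p\nmid A+B$, part (4) to strip off $t^i$, part (5) for $A\equiv 1$, part (6) for $(pa,pb)$) with the inductive hypothesis applied to the reduced pair at exponent $p^{k-1}$; the only difference is that you induct on $A+B$ while the paper inducts on $A$, which is immaterial. The one cosmetic point is your base case $(A,B)=(0,1)$, which needs $H_{0,b}=t^b$ rather than anything in part (1), but this is harmless (the paper simply starts at $A=1$).
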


\begin{proof} We argue by induction on $A$, noting that the case $A=1$ follows from Theorem~\ref{thm:recursion-hooks}(1). If $p\nmid A+B$, then we can use Theorem~\ref{thm:recursion-hooks}(2) to conclude the equality: if $A\equiv 1\ (\op{mod}\ p)$ then both sides equal $t^{p^k-2A-B}\cdot H_{A,0}$, and otherwise both sides vanish. We will therefore assume for the rest of the proof that $p|A+B$.

\textbf{Case 1:} $A=pa-i$, $B=pb+i$, with $a\geq 1$, $0\leq i\leq p-2$. Using Theorem~\ref{thm:recursion-hooks}(4), we may assume that $i=0$. It follows from Theorem~\ref{thm:recursion-hooks}(6) that
\[
\begin{aligned}
H_{A,p^k-2A-B} &= t^{p^k-2A-B+1}\cdot H_{A-p+1,0} + t^{p^k-2A-B-(p^{k-1}-2a-b)}\cdot H_{a,p^{k-1}-2a-b} \\
&=t^{p^k-2(A+B)}\cdot\left(t^{B+1}\cdot H_{A-p+1,0} + t^{B-b}\cdot H_{a,b}\right) ,
\end{aligned}\]
where the last equality uses the inductive hypothesis (which applies since $p^{k-1}\geq 2(A+B)/p = 2(a+b)$, and $a<A$). Theorem~\ref{thm:recursion-hooks}(6) implies $H_{A,B}=t^{B+1}\cdot H_{A-p+1,0} + t^{B-b}\cdot H_{a,b}$ and proves the desired identity.

\textbf{Case 2:} $A=pa+1$, $B=pb-1$ for some $b\geq 1$. It follows from Theorem~\ref{thm:recursion-hooks}(5) that
\begin{equation}\label{eq:HA-pk-2AB-first-appx}
H_{A,p^k-2A-B} = t^{p^k-2A-B+1-(p^{k-1}-2a-b)}\cdot H_{a+1,p^{k-1}-2a-b-1}.
\end{equation}
We can apply the induction hypothesis to $(a+1,b-1)$, since $a+1<A$ and $2(a+1+b-1)\leq p^{k-1}$, and get
\[H_{a+1,p^{k-1}-2a-b-1} = t^{p^{k-1}-2a-2b}\cdot H_{a+1,b-1}.\]
Plugging this into \eqref{eq:HA-pk-2AB-first-appx} yields
\[H_{A,p^k-2A-B} = t^{p^k-2(A+B)}\cdot\left(t^{B+1-b}\cdot H_{a+1,b-1}\right)\]
and the desired conclusion now follows from Theorem~\ref{thm:recursion-hooks}(5).
\end{proof}

\subsection{Symmetric powers of $\Omega$}\label{subsec:coh-Sym-Omega}

We can now prove \eqref{eq:stab-O(-d,d)}, which is equivalent via \eqref{eq:Flag-to-Pspace} to the following.

\begin{theorem}\label{thm:gen-fun-stab-coh-SymOmega}
    For $d\geq 0$ we have that
    \begin{equation}\label{eq:genfun-SymOmega} \sum_{i\geq 0}h^i_{st}(\Sym^d\Omega)\cdot t^i = \sum_{\ul{a}\in A_{p,d}} t^{|\ul{a}|_p}
    \end{equation}
\end{theorem}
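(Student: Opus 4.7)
The plan is to prove \eqref{eq:genfun-SymOmega} by induction on $d$, verifying that both sides obey the same recursion. First observe that, since $\Sym^d=\bbs_{(d)}$, the left-hand side equals $H_{d,0}(t)$ by definition \eqref{eq:def-Hab}. Set $T_d(t):=\sum_{\ul{a}\in A_{p,d}} t^{|\ul{a}|_p}$ for the right-hand side. I would show both $H_{d,0}$ and $T_d$ satisfy the following system, which determines $F_d$ uniquely from $F_0$:
\begin{enumerate}
 \item[(I)] $F_0=1$;
 \item[(II)] $F_d=0$ whenever $d\not\equiv 0,1\pmod{p}$;
 \item[(III)] $F_d = t\cdot F_{d-1}$ whenever $d\equiv 1\pmod{p}$ and $d\geq 1$;
 \item[(IV)] $F_{pa} = t^2\cdot F_{p(a-1)} + F_a$ for every $a\geq 1$.
\end{enumerate}
Since $p(a-1)<pa$ and $a<pa$ for $a\geq 1$, this indeed recovers every $F_d$ from $F_0=1$.

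For $H_{d,0}$, items (I), (II), (III) are the $b=0$ specializations of Theorem~\ref{thm:recursion-hooks}(1), (2), (3). Item (IV) follows by combining the $b=0$ case of Theorem~\ref{thm:recursion-hooks}(6), which gives $H_{pa,0} = t\cdot H_{pa-p+1,0}+H_{a,0}$, with Theorem~\ref{thm:recursion-hooks}(3) applied at $pa-p+1\equiv 1\pmod p$, which substitutes $H_{pa-p+1,0}=t\cdot H_{pa-p,0}$.

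For $T_d$, items (I) and (II) are immediate from \eqref{eq:def-Apd}: in case (II), $a_0\equiv d\pmod p$ forces $a_0\not\equiv 0,1\pmod p$, so $A_{p,d}=\emptyset$. For (III), when $d\equiv 1\pmod p$ the assignment $(a_0,a_1,\ldots)\mapsto(a_0-1,a_1,\ldots)$ is a bijection $A_{p,d}\leftrightarrow A_{p,d-1}$; writing $a_0=pk+1$ gives $|a_0|_p-|a_0-1|_p=(2k+1)-2k=1$, which accounts for the factor $t$. For (IV), split $A_{p,pa}$ (where $a_0\equiv 0\pmod p$) according to whether $a_0=0$ or $a_0\geq p$. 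Dropping the initial zero gives a $|\ul{a}|_p$-preserving bijection $\{\ul{a}\in A_{p,pa}:a_0=0\}\leftrightarrow A_{p,a}$, while subtracting $p$ from $a_0$ gives a bijection $\{\ul{a}\in A_{p,pa}:a_0\geq p\}\leftrightarrow A_{p,p(a-1)}$ that drops $|\ul{a}|_p$ by exactly $2$ (since $|pk|_p-|p(k-1)|_p=2$). Summing yields $T_{pa}=T_a+t^2\,T_{p(a-1)}$.

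The only genuine obstacle is the routine combinatorial bookkeeping underlying (II)--(IV) for $T_d$; the conceptual weight is already carried by Theorem~\ref{thm:recursion-hooks}. A conceptually cleaner alternative would be to argue directly from the Doty filtration of $\Sym^d$, computing the stable cohomology of each composition factor $\mc{F}^{\ul{a}}(\Omega)$ via the K\"unneth formula (Theorem~\ref{thm:Kunneth-coh-omega}), invariance under Frobenius (Theorem~\ref{thm:invariance-Frob}), and the truncated-power calculation \eqref{eq:stable-coh-trunc-pows}; that route, however, requires proving degeneration of the associated hypercohomology spectral sequence, which is subtle precisely because (as in Example~\ref{ex:stab-dd} for $p=2$, $d=6$) distinct composition factors can contribute $1$-dimensional summands to the same cohomological degree.
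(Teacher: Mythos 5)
Your proposal is correct and follows essentially the same route as the paper: both identify the left-hand side with $H_{d,0}$, extract a recursion from Theorem~\ref{thm:recursion-hooks}, verify the same recursion for $\sum_{\ul{a}\in A_{p,d}}t^{|\ul{a}|_p}$ via digit-shifting bijections on $A_{p,d}$, and conclude by induction on $d$. The only cosmetic difference is that you fold Theorem~\ref{thm:recursion-hooks}(3) into the step at $d\equiv 0\pmod p$, using $F_{pa}=t^2\cdot F_{p(a-1)}+F_a$, whereas the paper works directly with $H_{pa,0}=t\cdot H_{pa-p+1,0}+H_{a,0}$ and the bijection $a_0\mapsto a_0-p+1$.
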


\begin{proof}
    Since $\Sym^d\Omega = \bb{S}_{(a,1^b)}\Omega$ for $a=d$ and $b=0$, it follows that the left side of \eqref{eq:genfun-SymOmega} is $H_{d,0}$. We denote the right side of \eqref{eq:genfun-SymOmega} by $A(d)$, and prove by induction on $d$ that $H_{d,0}=A(d)$. When $d\not\equiv 0,1\ (\op{mod}\ p)$ we have that $A_{p,d}=\emptyset$ hence $A(d)=0$, while $H_{d,0}=0$ by Theorem~\ref{thm:recursion-hooks}(2).

    If $d\equiv 1\text{ (mod $p$)}$, then we have a bijection $\phi:A_{p,d-1}\lra A_{p,d}$ given by
    \[ \phi(a_0,a_1,a_2,\cdots) = (a_0+1,a_1,a_2,\cdots).\]
    Since $|\phi(\ul{a})|_p = |\ul{a}|_p+1$ for $\ul{a}\in A_{p,d-1}$, it follows that $A(d)=t\cdot A(d-1)$. By Theorem~\ref{thm:recursion-hooks}(3), we have $H_{d,0}=t\cdot H_{d-1,0}$, hence the equality $H_{d,0}=A(d)$ follows by induction.

    Suppose finally that $d\equiv 0\text{ (mod $p$)}$. By Theorem~\ref{thm:recursion-hooks}(6), we have
    \[ H_{d,0} = t\cdot H_{d-p+1,0} + H_{d/p,0},\]
    so it suffices to prove the analogous identity for $A(d)$. We have an injection $\iota:A_{p,d/p}\lra A_{p,d}$ defined by
    \[ \iota(a_0,a_1,a_2,\cdots) = (0,a_0,a_1,a_2,\cdots)\]
    which satisfies $|\iota(\ul{a})|_p = |\ul{a}|_p$. The complement $A_{p,d}\setminus\iota(A_{p,d/p})$ consists precisely of those $\ul{a}\in A_{p,d}$ with $a_0\neq 0$. Since $a_0\equiv d\equiv 0\text{ (mod $p$)}$, it follows that $a_0\geq p$ for all such $\ul{a}$. We get a bijection
    \[\psi: A_{p,d}\setminus\iota(A_{p,d/p}) \lra A_{p,d-p+1},\quad \psi(\ul{a})=(a_0-p+1,a_1,a_2,\cdots).\]
    Since $|\psi(\ul{a})|_p=|\ul{a}|_p-1$, it follows that
    \[A(d) = \sum_{\a\in A_{p,d/p}}t^{|\iota(\ul{a})|_p} + \sum_{\ul{b}\in A_{p,d-p+1}}t^{|\ul{b}|_p + 1} = A(d/p) + t\cdot A(d-p+1),\]
    concluding our proof.
\end{proof}

\begin{remark}
 As it was pointed to us by one of the referees, the reciprocal of the polynomial \eqref{eq:genfun-SymOmega} agrees with the Poincar\'e polynomial encoding the extension groups $\Ext^{\bullet}(\bw^d,D^d)$, or equivalently $\Ext^{\bullet}(\Sym^d,\bw^d)$. These groups were first computed by Akin \cite{akin-ext}, and the computations were further extended in work of Cha\l upnik \cite{chalup-Adv} and Touz\'e \cite{touze-bar-complexes}. In the subsequent work \cite{RV} we explore this connection in detail, and explain how it relates more broadly to Koszul--Ringel duality on the category of strict polynomial functors as described in the work of Cha\l upnik, Krause, and Touz\'e \cites{chalup-Adv,Krause-duality,touze}.
 \end{remark}

\begin{example} It follows from Theorem~\ref{thm:gen-fun-stab-coh-SymOmega} that the cohomological degrees where $H^k_{st}\left(\Sym^d\Omega\right)\neq 0$ are all in the range $1\leq k\leq d$. If $\chr(\kk)=2$ and $d=2^r$ then non-vanishing occurs for all such degrees:
$$H^k_{st}\left(\Sym^{2^r}\Omega\right) \neq 0 \quad \textrm{for all} \ k=1 , \dots , 2^r.$$
Indeed, consider $1\leq k\leq 2^r$ and write $k=2^{r-i}+j$ for some $1\leq i \leq r$ and $0\leq j \leq 2^{r-i}$. Consider the tuple
\[\ul{a} = (0 , \dots , 2j,2^{r-i} -j,0,\dots , 0),\text{ where }a_{i-1} = 2j\text{ and }a_i = 2^{r-i} - j,\]
and note that since $a_{i-1}\cdot 2^{i-1}+a_i\cdot 2^i = 2^r$, we have $\ul{a}\in A_{2,2^r}$. Since $|\ul{a}|_2=a_{i-1}+a_i=k$, it follows from \eqref{eq:genfun-SymOmega} that $H^k_{st}\left(\Sym^{2^r}\Omega\right) \neq 0$, as desired.
\end{example}

\begin{example}\label{ex:cohSyma-small-vals}
We record below the stable cohomology of small symmetric powers of $\Omega$ in low characteristics.
\begin{center}
\def\arraystretch{1.2}
\begin{tabular}{|c|c|c|c|c|}
\hline
& $\chr(\kk)=2$ & $\chr(\kk)=3$         & $\chr(\kk)=5$       & $\chr(\kk)=7$     \\ \hline
$H_{1,0}$ & $t$ & $t$ & $t$ & $t$ \\ \hline
$H_{2,0}$ & $t + t^2$                   & 0           & 0         & 0       \\ \hline
$H_{3,0}$                        & $t^2 + t^3$                 & $t + t^2$   & 0         & 0       \\ \hline
$H_{4,0}$                        & $t + t^2 + t^3 + t^4$       & $t^2 + t^3$ & 0         & 0       \\ \hline
$H_{5,0}$                        & $t^2+t^3+t^4+t^5$           & 0           & $t+t^2$   & 0       \\ \hline
$H_{6,0}$                        & $t+2t^2+t^3+t^4+t^5$        & $t^3+t^4$   & $t^2+t^3$ & 0       \\ \hline
$H_{7,0}$                        & $t^2+2t^3+t^4 + t^5+t^6$    & $t^4+t^5$   & 0         & $t+t^2$ \\ \hline
$H_{8,0}$ & $t+t^2+t^3+2t^4+2t^5+t^6+t^7+t^8$ & 0 & 0 & $t^2+t^3$ \\ \hline
\end{tabular}
\end{center}
\end{example}

\subsection{Generating functions and non-vanishing for hooks}\label{subsec:generating}

Using Theorem~\ref{thm:recursion-hooks} we can now prove \eqref{eq:Npb+p-1}, \eqref{eq:Npb+i}, and \eqref{eq:Hab-nonzero}. To that end, we define
\[ \mc{H}_b(t,u) = \sum_{a\geq 1}H_{a,b}(t)\cdot u^a,\]
so that $\mc{N}_b(t,u) = u^b/t^b\cdot \mc{H}_b(t,u)$, and let
\[ \mc{A}(t,u) = \sum_{d\geq 0}H_{pd,0}(t)\cdot u^{pd} = \sum_{d\geq 0} A(pd)\cdot u^{pd},\]
where the last equality uses the notation in the proof of Theorem~\ref{thm:gen-fun-stab-coh-SymOmega}. Using the fact that $A(pd+1)=t\cdot A(pd)$, and $A(pd+i)=0$ for $i=2,\cdots,p-1$, we have that
\[ \sum_{d\geq 0} A(d)\cdot u^d = \mc{A}(t,u)\cdot (1+tu).\]
The recursion for $A(d)=H_{d,0}$ yields then
\[
\begin{aligned}
    \mc{A}(t,u) &= \sum_{d\geq 0} \left(A(d)+t\cdot A(pd-p+1)\right)\cdot u^{pd} = \left(\sum_{d\geq 0} A(d)\cdot (u^p)^d\right) + \left(\sum_{d\geq 0}A(pd-p)\cdot t^2\cdot u^{pd} \right) \\
    &= \mc{A}(t,u^p)\cdot(1+tu^p) + \mc{A}(t,u)\cdot t^2\cdot u^p.
\end{aligned}
\]
We can rewrite the above equality as
\[ \mc{A}(t,u) = \frac{1+t\cdot u^p}{1-t^2\cdot u^p}\cdot\mc{A}(t,u^p),\]
which immediately yields \eqref{eq:def-Atu}. Referring to parts (1)--(6) of Theorem~\ref{thm:recursion-hooks} and writing $H_{a,b}=H_{a,b}(t)$, we obtain for $b\geq 1$
\[
\begin{aligned}
\mc{H}_{pb-1}(t,u) &= \sum_{a\geq 1} H_{a,pb-1}\cdot u^a \overset{(2)}{=} \sum_{a\geq 0} H_{pa+1,pb-1}\cdot u^{pa+1} \\
&\overset{(5)}{=} \sum_{a\geq 0} H_{a+1,b-1} \cdot t^{(p-1)b}\cdot u^{pa+1} = \sum_{a\geq 1} H_{a,b-1} \cdot t^{(p-1)b}\cdot u^{pa-p+1} = \frac{t^{(p-1)b}}{u^{p-1}}\cdot\mc{H}_{b-1}(t,u^p).
\end{aligned}
\]
It follows that
\[\mc{N}_{pb-1}(t,u) = \frac{u^{pb-1}}{t^{pb-1}}\cdot\mc{H}_{pb-1}(t,u) = \frac{u^{p(b-1)}}{t^{b-1}}\cdot\mc{H}_{b-1}(t,u^p) = \mc{N}_{b-1}(t,u^p),\]
which yields \eqref{eq:Npb+p-1} after substituting $b\mapsto b+1$.

To prove \eqref{eq:Npb+i}, we use again parts (1)--(6) of Theorem~\ref{thm:recursion-hooks}. We let $i=0,\cdots,p-2$ and have
\[ 
\begin{aligned}
    \mc{H}_{pb+i}(t,u) &= \sum_{a\geq 1}H_{a,pb+i}\cdot u^a \overset{(2)}{=} \left(\sum_{a\geq 1}H_{pa-i,pb+i}\cdot u^{pa-i} \right) + \left(\sum_{a\geq 0}H_{pa+1,pb+i}\cdot u^{pa+1} \right) \\
    &\overset{(4),(2)}{=}\left(\sum_{a\geq 1}H_{pa,pb}\cdot t^i\cdot u^{pa-i} \right) + t^{pb+i}\cdot\left(\sum_{a\geq 0}H_{pa+1,0}\cdot u^{pa+1}\right) \\
    &\overset{(6),(3)}{=}\left[\sum_{a\geq 1}\left(t^{pb+2}\cdot H_{pa-p,0}+t^{(p-1)b}\cdot H_{a,b}\right)\cdot t^i\cdot u^{pa-i} \right] + t^{pb+i}\cdot t\cdot u\cdot\mc{A}(t,u) \\
    &= t^{pb+i+2}\cdot u^{p-i}\cdot\left(\sum_{a\geq 1}H_{pa-p,0}\cdot u^{pa-p} \right) + \frac{t^{(p-1)b+i}}{u^i}\cdot\left(\sum_{a\geq 1}H_{a,b}\cdot u^{pa}\right) + t^{pb+i+1}\cdot u\cdot\mc{A}(t,u) \\
    &= \left(t^{pb+i+2}\cdot u^{p-i}+t^{pb+i+1}\cdot u\right)\cdot\mc{A}(t,u) + \frac{t^{(p-1)b+i}}{u^i}\cdot \mc{H}_b(t,u^p).
\end{aligned}
\]
Multiplying by $(u/t)^{pb+i}$ yields \eqref{eq:Npb+i}, as desired.

We are left with checking \eqref{eq:Hab-nonzero}. We write $b=-1+c\cdot q$, $p\nmid c$, and by iterating \eqref{eq:Npb+p-1} we get that
\[ \mc{N}_b(t,u) = \mc{N}_c(t,u^q),\]
so we can reduce the problem to the case $q=1$, or equivalently, to $b\not\equiv -1\ (\op{mod}\ p)$. Under this assumption, we have to show that
\[H_{a,b}(t)\neq 0 \Longleftrightarrow p|a-1\quad\text{ or }\quad p|a+b.\]
If $p\nmid a+b$ then the equivalence holds by Theorem~\ref{thm:recursion-hooks}(2), since $H_{a,0}(t)\neq 0$ whenever $a\equiv 1\ (\op{mod}\ p)$ (see Theorem~\ref{thm:gen-fun-stab-coh-SymOmega}). We may therefore assume that $p|a+b$, and we have to show that $H_{a,b}(t)\neq 0$. Writing $a=pa'-i$, $b=pb'+i$ for some $0\leq i\leq p-2$, it suffices by Theorem~\ref{thm:recursion-hooks}(4), (6) to show that $H_{pa'-p+1,0}(t)\neq 0$ which follows again from Theorem~\ref{thm:gen-fun-stab-coh-SymOmega}.

\subsection{Duality Between Hooks and 2-Column Partitions}\label{subsec:duality-hook-2column}

The goal of this section is to describe the stable cohomology for $\bb{S}_{\ll}\Omega$ when $\ll=(m,d)'$ is a partition with two columns. We do so by relating it to the stable cohomology for an associated hook partition, which can then be calculated using Theorem \ref{thm:recursion-hooks}.

\begin{theorem}\label{thm:dualityFor2ColAndHooks}
    If $\lambda=(m,d)'$ is a partition with two columns as in Section~\ref{subsec:two-cols}, then
    $$H^i_{st} (\bbs_{\lambda} (\Omega) ) = H^{2m+1 - i}_{st} (\bbs_{(d+1,1^{m-d})} (\Omega)) \quad \text{for all} \ i.$$ 
\end{theorem}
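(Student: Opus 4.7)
The plan is to reduce the stated duality to a single identity of $P$-polynomials and then verify that identity by chaining three ingredients already developed. First I would unwind the generating functions: by Theorem~\ref{thm:two-columns}(2) the two-column side equals
\[\sum_i h^i_{st}(\bbs_\lambda \Omega)\, t^i \;=\; t^m\, P(1^d, -m-d-1)(t),\]
while Theorem~\ref{thm:stab-coh-ribbon=hom-Cw} applied to the hook $(d+1,1^{m-d})$, with column composition $(m-d+1, 1^d)$ and total weight $N = m+1$, gives
\[\sum_i h^i_{st}(\bbs_{(d+1,1^{m-d})}\Omega)\, t^i \;=\; t^{m+1}\, P(m-d+1, 1^d)(1/t).\]
A short bookkeeping calculation shows the claimed duality $h^i_{st}(\bbs_\lambda \Omega) = h^{2m+1-i}_{st}(\bbs_{(d+1,1^{m-d})}\Omega)$ is equivalent to the $P$-identity $P(1^d, -m-d-1)(t) = P(m-d+1, 1^d)(t)$. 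In characteristic zero this is trivial: Theorem~\ref{thm:Weyl-Omega} forces both sides to vanish when $d \geq 1$, and for $d = 0$ it collapses to $h^m_{st}(\Omega^m) = 1 = h^{m+1}_{st}(\Omega^{m+1})$.

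Assume $\chr(\kk) = p > 0$, and choose $k$ with $p^k \geq 2(m+1)$; since $m \geq d$ this also forces $p^k > d$. Three equalities then chain together as follows. By Proposition~\ref{prop:props-PCw}(4) applied to $\underline{w} = (1^d, -m-d-1)$ (whose first $d$ entries sum to $d < p^k$), one gets $P(1^d, -m-d-1) = P(1^d, p^k - m - d - 1)$. Writing $c = p^k - m - d - 1 \geq 1$, the compositions $(1^d, c)$ and $(c, 1^d)$ both describe the hook ribbon $(d+1, 1^{c-1})$, so by Proposition~\ref{prop:props-PCw}(1) their $P$-polynomials coincide. Finally, Corollary~\ref{cor:strange-symm-Hab} with $A = d+1$ and $B = m-d$---whose hypothesis $p^k \geq 2(A+B) = 2(m+1)$ is exactly our choice---yields $H_{d+1,\, p^k-m-d-2}(t) = t^{p^k - 2m - 2}\, H_{d+1,\, m-d}(t)$, and translating via the elementary relation $P(c, 1^d)(t) = t^{c+d}\, H_{d+1,\, c-1}(1/t)$ (immediate from Theorem~\ref{thm:stab-coh-ribbon=hom-Cw}) converts this into $P(p^k-m-d-1, 1^d) = P(m-d+1, 1^d)$.

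The subtle step is the ribbon-reversal invariance $P(1^d, c) = P(c, 1^d)$ used in the middle. The conceptual justification is via Proposition~\ref{prop:props-PCw}(1): a ribbon and its $180^\circ$-rotation yield isomorphic skew Schur functors and hence identical stable cohomology. Alternatively, since $c \geq 1$ makes every subcomponent weight non-negative, one can give a direct elementary proof by building a sign-twisted chain isomorphism $C_\bullet(1^d, c) \simeq C_\bullet(c, 1^d)$ from the symmetry $\binom{n}{k} = \binom{n}{n-k}$. Once this is in place, composing the three equalities proves the $P$-identity and hence the theorem.
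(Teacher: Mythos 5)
Your argument is correct and follows essentially the same route as the paper's proof: express the two-column side as the homology of $C_{\bullet}(1^d,-m-d-1)$ via Theorem~\ref{thm:two-columns}(2), add $p^k$ to the last entry using Proposition~\ref{prop:props-PCw}(4), reinterpret the result as hook cohomology, and finish with Corollary~\ref{cor:strange-symm-Hab} for $A=d+1$, $B=m-d$. The only differences are presentational: you treat characteristic zero separately and you make explicit (with a valid justification) the reversal identification $P(1^d,c)=P(c,1^d)$, which the paper uses implicitly when it reads $C_{\bullet}(1^d,p^k-m-d-1)$ as computing the stable cohomology of $\bb{S}_{(d+1,1^{p^k-m-d-2})}\Omega$.
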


\begin{proof} Consider a power $p^k$ of $p=\chr(\kk)$ with $p^k>d$ and $p^k>2m+2$. We have for all $i$
\[ 
\begin{aligned}
H^i_{st}(\bb{S}_{\ll}\Omega) &\overset{\text{Thm.~\ref{thm:two-columns}(2)}}{=} H_{i-m}\left(C_{\bullet}(1^d,-m-d-1)\right) \overset{\text{Prop.~\ref{prop:props-PCw}(4)}}{=} H_{i-m}\left(C_{\bullet}(1^d,p^k-m-d-1)\right)\\
&\overset{\text{Thm.~\ref{thm:stab-coh-ribbon=hom-Cw}}}{=}H^{p^k-1-i}_{st}\left(\bb{S}_{(d+1,1^{p^k-m-d-2})}\Omega\right).
\end{aligned}
\]
It follows that for the cohomology polynomials we get
\[h_{\bb{S}_{\ll}\Omega}(t) = t^{p^k-1}\cdot H_{d+1,p^k-m-d-2}(t^{-1}).\]
Applying Corollary~\ref{cor:strange-symm-Hab} with $A=d+1$ and $B=m-d$, it follows from the above equality that
\[h_{\bb{S}_{\ll}\Omega}(t) = t^{p^k-1}\cdot t^{2m+2-p^k} \cdot H_{d+1,m-d}(t^{-1}),\]
which is equivalent to the desired identifications of stable cohomology groups.
\end{proof}

\begin{example}
    Let $\ll=(6,3)'$, and suppose that $\chr(\kk)=3$. It follows from Theorem \ref{thm:dualityFor2ColAndHooks} that there is an equality of cohomology polynomials
    $$h_{\bbs_{\lambda} (\Omega)} (t) = t^{13} \cdot h_{\bbs_{(4,1^3)} (\Omega)} (t^{-1}).$$
    By the recursion of Theorem \ref{thm:recursion-hooks}, there is an equality
    $$t^{13}\cdot h_{\bbs_{(4,1^3)} (\Omega)} (t^{-1}) = t^{13} \cdot t^{-3} \left( t^{-2} + t^{-3} \right) = t^7 + t^8.$$
    It follows that the only nonvanishing stable cohomology for $\bb{S}_{\ll}\Omega$ is
    $$H^7_{st} (\bbs_{\lambda} (\Omega) ) = H^8_{st} (\bbs_{\lambda} (\Omega)) = \kk.$$
\end{example}

\section{Vanishing for Koszul modules}\label{sec:vanish-Koszul}

Koszul modules are finitely generated graded modules over a polynomial ring that have been studied
prominently in topology in the form of \defi{Alexander invariants of groups} \cites{PS-chen,PS-Artin,sch-suc-BGG,DPS-jump-loci,PS-johnson,PS-resonance}. More recently, they have emerged as fundamental objects in algebraic geometry and commutative algebra, bearing numerous analogies with Koszul cohomology groups \cites{AFPRW,AFPRW-groups,rai-sam,AFPRW-resonance,AFRS-reduced}, and with a multitude of applications the most striking of which was a new proof of Voisin's theorem on the validity of Green's conjecture for general curves \cites{V02,V05}. The advantage of the theory of Koszul modules is that it allows one to extend Voisin's results to sufficiently positive characteristics in an effective fashion. After recalling the main definitions and the optimal vanishing result for Koszul modules in sufficiently positive characteristic, we prove an effective vanishing result for Koszul modules in all characteristics, and explain the sense in which it is best possible. 

As in the Introduction, we let $V=\kk^n$ and $K\subset\bw^2 V$, and define $W(V,K)$ as the homology of \eqref{eqn:W}. Papadima and Suciu show in \cite{PS-resonance}*{Lemma~2.4} that the set-theoretic support of $W(V,K)$ is given by the \defi{resonance variety}
\begin{equation}\label{eq:resonance}
\mc{R}(V,K)=\Bigl\{a\in V^\vee  \, | \, \mbox{ there exists }b\in V^\vee \mbox{ such that } a\wedge b\in K^\perp\setminus \{0\} \Bigr\}\cup \{0\},
\end{equation}
where $K^{\perp}$ is the vector space of forms in $\bw^2 V^{\vee}$ vanishing identically on $K$. In particular, we have that $\mc{R}(V,K) = \{0\}$ if and only if $W_j(V,K) = 0$ for $j\gg 0$, and \cite{PS-resonance} asks for an effective bound for when the vanishing $W_j(V,K)=0$ starts. It is shown in \cites{AFPRW,AFPRW-groups} that if $\chr(\kk)=0$ or $\chr(\kk)\geq n-2$ then we have the equivalence
\begin{equation}\label{eq:thm-main-equiv}
\mc{R}(V,K) = \{0\} \Longleftrightarrow W_j(V,K) = 0\mbox{ for }j\geq n-3.
\end{equation}
Moreover, if $\dim(K)=2n-3$ and $n\geq 4$ then $W_{n-4}(V,K)\neq 0$. Using our earlier results, we can now prove a uniform result in all characteristics.

\begin{theorem}\label{thm:vanishing-koszul}
    Let $\kk$ be any field, and suppose that $\mc{R}(V,K) = \{0\}$. We have:
    \begin{itemize}
     \item[(i)] $W_j(V,K) = 0\mbox{ for }j\geq 2n-7$.
     \item[(ii)] If $\dim(K)=2n-3$ then 
     \[ W_{2n-8}(V,K) \neq 0 \Longleftrightarrow n = 3 + p^k\text{ for some }k,\text{ where }p=\chr(\kk). \]
     Moreover, if the equivalent statements above hold, then $W_{2n-8}(V,K)\simeq\kk$ is one-dimensional.
    \end{itemize}
\end{theorem}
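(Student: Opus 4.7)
The plan is to translate the vanishing problem into a computation of stable cohomology for $\Sym^d\Omega$, which is fully controlled by Theorem~\ref{thm:gen-fun-stab-coh-SymOmega}.

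First I would reduce both parts of the theorem to the case $\dim K = 2n-3$ with $K$ generic. The assumption $\mc{R}(V,K)=\{0\}$ forces $\dim K\geq 2n-3$: the cone over the Grassmannian of decomposable 2-forms has dimension $2n-3$ in $\bw^2 V^\vee$, and any $K^\perp$ of dimension exceeding $\binom{n-2}{2}$ would meet this cone non-trivially by the dimension count $\binom{n-2}{2}+(2n-3)=\binom{n}{2}$, producing a non-zero point of $\mc{R}(V,K)$ via~\eqref{eq:resonance}. Combined with the surjection $W(V,K_0)\twoheadrightarrow W(V,K)$ induced by any inclusion $K_0\subseteq K$ (from the inclusion $K_0\oo S\hookrightarrow K\oo S$ in~\eqref{eqn:W}), a standard semicontinuity/deformation argument reduces matters to a generic $K_0\subseteq K$ of dimension exactly $2n-3$; generically $K_0^\perp$ meets the cone of decomposables only at the origin, so $\mc{R}(V,K_0)=\{0\}$ holds automatically. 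For part~(ii) the hypothesis $m=2n-3$ is already at the minimal value.

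Next I would give a sheaf-cohomological interpretation of $W_d(V,K)$. The plan is to sheafify \eqref{eqn:W} on $\PP^{n-1}$ and analyze the associated hypercohomology spectral sequence, combined with the Euler sequence~\eqref{eq:Euler-sequence} and the standard projection-formula manipulations that underlie~\eqref{eq:Flag-to-Pspace} (as developed in \cite{BCRV}*{Sections~9.3,~9.8}). The goal is an identification of the form
\[ W_d(V,K)\;\cong\;H^{j(n,d)}\bigl(\PP^{n-1},\,\Sym^{d+2}\Omega\bigr)\;\cong\;h^{j(n,d)}_{st}\bigl(-(d+2),\,d+2\bigr) \]
for an explicit cohomological degree $j(n,d)$ depending on $n$ and $d$. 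Granting this, part~(i) will follow from Theorem~\ref{thm:gen-fun-stab-coh-SymOmega}, since for $d\geq 2n-7$ the index $j(n,d)$ will be forced to lie outside the set $\{|\ul{a}|_p:\ul{a}\in A_{p,d+2}\}$, making the stable cohomology group vanish. For part~(ii), taking $d=2n-8$ so that $d+2=2(n-3)$, a direct $p$-adic expansion analysis of $A_{p,2(n-3)}$ should show that there exists a unique tuple $\ul{a}\in A_{p,2(n-3)}$ with $|\ul{a}|_p=j(n,2n-8)$ precisely when $n-3=p^k$, which yields simultaneously the equivalence and the one-dimensionality claimed in~(ii).

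The hardest step will be the second: establishing the precise sheaf-theoretic identification for a generic $K$ of dimension $2n-3$. The obvious degenerate model $K=W\wedge V$ with $W\subseteq V$ of dimension two fails from the start, since $K^\perp=\bw^2(V/W)^\vee$ consists entirely of decomposable 2-forms, yielding by~\eqref{eq:resonance} a non-trivial resonance variety $\mc{R}(V,K)=(V/W)^\vee$. One must therefore work with a genuinely generic $K$, and the identification will likely proceed either by a semicontinuity/deformation argument from an auxiliary tractable model, or by the construction of an explicit universal complex whose hypercohomology computes $W(V,K)$, analogous to the Akin--Buchsbaum-type resolutions developed in Section~\ref{sec:resolutions-ext-pows}.
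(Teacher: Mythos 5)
Your opening reduction (to $K$ generic of dimension $2n-3$, via the surjection $W(V,K_0)\onto W(V,K)$ and the dimension count on the cone of decomposables) is essentially the step the paper delegates to \cite{AFPRW-groups} and \cite{rai-Kmod}, and is fine in spirit. The genuine gap is the step you yourself flag as hardest: the hoped-for identification $W_d(V,K)\cong H^{j(n,d)}\bigl(\PP^{n-1},\Sym^{d+2}\Omega\bigr)$ for a single explicit degree $j(n,d)$ does not exist, so the rest of the plan has nothing to stand on. Concretely, take $n=6$, $d=2$, $\chr(\kk)=0$: since $d+2=4\leq n-1$, Theorem~\ref{thm:stab-coh-omega}(2) puts $\Sym^{4}\Omega$ on $\PP^5$ in the stable range, where all of its cohomology vanishes in characteristic zero, while $\dim W_2(V,K)=21$ for generic $K$ of dimension $2n-3$ (Example~\ref{ex:vanishing-Koszul}). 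So $W_d(V,K)$ cannot be a single cohomology group of a symmetric power of $\Omega$ in general; and even at the one boundary degree where such an identification does hold, your ansatz picks the wrong functor: for $j=2n-8$ the relevant group turns out to be $H^1\bigl(\PP^{n-1},\Sym^{n-3}\Omega\bigr)^{\vee}$, not a cohomology group of $\Sym^{2n-6}\Omega$ (and for odd $p$ the conditions $n-3=p^k$ and $2n-6=p^k$ genuinely differ).

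What replaces your missing identification in the paper is the Buchsbaum--Rim complex of the sheafified map $K\oo\mc{O}_{\PP}\to\Omega(2)$ coming from \eqref{eqn:W}: the hypothesis $\mc{R}(V,K)=\{0\}$ says exactly that this map of bundles is surjective at every point, so the complex is exact, with $\mc{B}_0=\Omega(2)$, $\mc{B}_1=K\oo\mc{O}_{\PP}$ and $\mc{B}_r=\bw^{n+r-2}K\oo\mc{O}_{\PP}(-n-2r+6)\oo(\Sym^{r-2}\Omega)^{\vee}$ for $2\leq r\leq n-1$. The hypercohomology spectral sequence of $\mc{B}_{\bullet}(j)$ exhibits $W_j(V,K)$ as $E_2^{0,0}$, and Serre duality turns the obstruction terms $E_1^{-r,r-1}$ into the whole family $H^{n-r}\bigl(\PP,(\Sym^{r-2}\Omega)(2r-6-j)\bigr)^{\vee}$, $r=2,\dots,n-1$ --- not one group but $n-2$ of them. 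For $j\geq 2n-7$ every twist $2r-6-j$ is negative and $n-r<n-1$, so the soft vanishing Theorem~\ref{thm:coh-omega}(3) kills all obstructions; note that part (i) never uses the arithmetic of $A_{p,d}$ at all. Only at $j=2n-8$ does exactly one term survive ($r=n-1$, twist $0$), and convergence of the spectral sequence to zero forces $W_{2n-8}(V,K)\simeq H^1\bigl(\PP,\Sym^{n-3}\Omega\bigr)^{\vee}$, at which point Theorem~\ref{thm:gen-fun-stab-coh-SymOmega} gives nonvanishing, and one-dimensionality, precisely when $n-3=p^k$. A further point your genericity/semicontinuity framework would miss: part (ii) is a statement about every $K$ of dimension $2n-3$ with vanishing resonance, and the Buchsbaum--Rim argument delivers the isomorphism for every such $K$ (exactness only needs the empty degeneracy locus), whereas deformation from a generic model would at best control the generic $K$.
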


Before explaining the proof of Theorem~\ref{thm:vanishing-koszul}, we illustrate it with an example.

\begin{example}\label{ex:vanishing-Koszul}
We assume throughout that $\dim(V)=n$, $\dim(K)=2n-3$, and that $W(V,K)$ has finite length. If $\chr(\kk)=0$ or $\chr(\kk)\geq n-2$, then $\dim(W_j(V,K))$ is uniquely determined, and is given for small values of the parameters $j,n$ in the following table (see \eqref{eq:thm-main-equiv} and also \cite{AFPRW}*{Theorem~1.4}):

\[\begin{array}{c|ccccc}
     {}_n\ \backslash^j&0&1&2&3&4 \\ \hline
     4&1&-&-&-&-\\
     5&3&5&-&-&- \\
     6&6&16&21&-&-\\
     7&10&35&70&84&-\\
     8&15&64&162&288&330\\
\end{array}\]
The equivalence~\eqref{eq:thm-main-equiv} is no longer true in small characteristics, for instance for $n=6$ and $\chr(\kk)=3$ one can have:
\[
\begin{array}{c|c|c|c|c|c|c|c}
     j & 0 & 1 & 2 & 3 & 4 & 5 & \cdots\\ \hline
     \dim W_j(V,K) &6&16&21&6&1 & 0 & \cdots \\
\end{array}
\]
Notice that $2n-7=5$ so Theorem~\ref{thm:vanishing-koszul}(i) is sharp. Moreover, $n=3+\chr(\kk)$ hence Theorem~\ref{thm:vanishing-koszul}(ii) applies.

Similarly, if $n=7$ and $\chr(\kk)=2$, one can have:
\[
\begin{array}{c|c|c|c|c|c|c|c|c|c}
     j & 0 & 1 & 2 & 3 & 4 & 5 & 6 & 7 & \cdots\\ \hline
     \dim W_j(V,K) &10&35&70&84&28&7&1 & 0 & \cdots \\
\end{array}
\]
which shows again the optimality of Theorem~\ref{thm:vanishing-koszul}.  If instead $n=8$ and $\chr(\kk)=2$, then the typical Hilbert function for $W(V,K)$ is
\[
\begin{array}{c|c|c|c|c|c|c|c|c|c}
     j & 0 & 1 & 2 & 3 & 4 & 5 & 6 & 7 & \cdots\\ \hline
     \dim W_j(V,K) &15&64&162&288&330&64&15 & 0 & \cdots \\
\end{array}
\]
so vanishing holds already for $j=2n-9$ (notice that $n\neq 3+2^k$ in this case).
\end{example}

\begin{proof}[Proof of Theorem~\ref{thm:vanishing-koszul}] Our argument follows closely the presentation in \cite{rai-Kmod}*{Proof of Theorem~2.2}. For part (i), we can replace $K$ by a generic subspace of dimension $2n-3$ (see \cite{AFPRW-groups}*{
Proof of Theorem 3.2} or \cite{rai-Kmod}*{Exercise 2.9}). We write $m=2n-3$, $\PP=\op{Proj}(S)\simeq\PP^{n-1}$, and form the exact Buchsbaum-Rim complex $\mc{B}_{\bullet}$ with $\mc{B}_0 = \Omega\oo\mc{O}_{\PP}(2)$, $\mc{B}_1 = K \oo \mc{O}_{\PP}$, 
\[\mc{B}_r = \bw^{n+r-2}K \oo \mc{O}_{\PP}(-n-2r+6) \oo \left(\Sym^{r-2}\Omega\right)^{\vee}\quad\text{for }r=2,\cdots,n-1.\]
The assertion that $W_j(V,K)=0$ is equivalent to the surjectivity of the induced map on global sections
\begin{equation}\label{eq:H0B1-to-H0B0} H^0(\PP,\mc{B}_1\oo\mc{O}_{\PP}(j)) \lra H^0(\PP,\mc{B}_0\oo\mc{O}_{\PP}(j)).
\end{equation}
We consider the hypercohomology spectral sequence
\[ E_1^{-r,s} = H^s(\PP,\mc{B}_r\oo\mc{O}_{\PP}(j)) \Longrightarrow \bb{H}^{s-r}(\PP,\mc{B}_{\bullet}\oo\mc{O}_{\PP}(j))=0,\]
where the vanishing of hypercohomology follows from the exactness of $\mc{B}_{\bullet}\oo\mc{O}_{\PP}(j)$. Since the cokernel of \eqref{eq:H0B1-to-H0B0} is $E_2^{0,0}$, in order to prove that \eqref{eq:H0B1-to-H0B0} is surjective it is enough to show that
\[E_1^{-r,r-1} = 0 \quad\text{for }r=2,\cdots,n-1. \]
Via Serre duality, this is further equivalent to showing that
\begin{equation}\label{eq:vanish-Hnr-Sym-twisted} H^{n-r}\left(\PP,(\Sym^{r-2}\Omega)\oo\mc{O}_{\PP}(2r-6-j)\right) = 0 \quad\text{for }r=2,\cdots,n-1.
\end{equation}
If $j=2n-7$ then $2r-6-j<0$ and $n-r<n-1$ for $2\leq r\leq n-1$, so the above vanishing follows from Theorem~\ref{thm:coh-omega}(3), proving part (i) of Theorem~\ref{thm:vanishing-koszul}.

If $j=2n-8$, then the same argument shows that \eqref{eq:vanish-Hnr-Sym-twisted} holds for $r<n-1$. More generally, we have
\[ E_1^{-r,s} = 0\quad\text{ for all }s>0\text{ and }r< n-1,\text{ and }\quad E_1^{-n+1,n-2} = H^{1}\left(\PP,\Sym^{n-3}\Omega\right)^{\vee}.\]
Since the spectral sequence converges to $0$, we must have that $E_2^{0,0}=W_{2n-8}(V,K)$ is isomorphic to $E_1^{-n+1,n-2}$. By Theorem~\ref{thm:gen-fun-stab-coh-SymOmega}, $H^1(\PP,\Sym^{n-3}\Omega)$ is non-zero if and only if $n-3=p^k$, in which case it is one-dimensional. This verifies part (ii) of the theorem and concludes our proof.
\end{proof}

\section*{Acknowledgements}
Experiments with Macaulay2 \cite{GS} and GAP \cite{doty-gap} have provided many valuable insights. Raicu acknowledges the support of the National Science Foundation Grants DMS-1901886 and DMS-2302341. VandeBogert acknowledges the support of the National Science Foundation Grant DMS-2202871. We are grateful to Henning Haahr-Andersen and Steven Sam for helpful discussions, and to the referees for their careful reading and valuable feedback.

\begin{bibdiv}
     \begin{biblist}

\bib{achinger}{article}{
   author={Achinger, Piotr},
   title={A characterization of toric varieties in characteristic $p$},
   journal={Int. Math. Res. Not. IMRN},
   date={2015},
   number={16},
   pages={6879--6892},
}

\bib{akin1}{article}{
   author={Akin, Kaan},
   title={On complexes relating the Jacobi-Trudi identity with the
   Bernstein-Gel'fand-Gel'fand resolution},
   journal={J. Algebra},
   volume={117},
   date={1988},
   number={2},
   pages={494--503},
}

\bib{akin-ext}{article}{
   author={Akin, Kaan},
   title={Extensions of symmetric tensors by alternating tensors},
   journal={J. Algebra},
   volume={121},
   date={1989},
   number={2},
   pages={358--363},
}

\bib{akin2}{article}{
   author={Akin, Kaan},
   title={On complexes relating the Jacobi-Trudi identity with the
   Bernstein-Gel'fand-Gel'fand resolution. II},
   journal={J. Algebra},
   volume={152},
   date={1992},
   number={2},
   pages={417--426},
}

\bib{AB1}{article}{
   author={Akin, Kaan},
   author={Buchsbaum, David A.},
   title={Characteristic-free representation theory of the general linear
   group},
   journal={Adv. in Math.},
   volume={58},
   date={1985},
   number={2},
   pages={149--200},
}

\bib{AB2}{article}{
   author={Akin, Kaan},
   author={Buchsbaum, David A.},
   title={Characteristic-free representation theory of the general linear
   group. II. Homological considerations},
   journal={Adv. in Math.},
   volume={72},
   date={1988},
   number={2},
   pages={171--210},
}

\bib{ABW}{article}{
   author={Akin, Kaan},
   author={Buchsbaum, David A.},
   author={Weyman, Jerzy},
   title={Schur functors and Schur complexes},
   journal={Adv. in Math.},
   volume={44},
   date={1982},
   number={3},
   pages={207--278},
}

\bib{andersen}{article}{
   author={Andersen, Henning Haahr},
   title={The first cohomology group of a line bundle on $G/B$},
   journal={Invent. Math.},
   volume={51},
   date={1979},
   number={3},
   pages={287--296},
}

\bib{andersen-frob}{article}{
   author={Andersen, Henning Haahr},
   title={The Frobenius morphism on the cohomology of homogeneous vector
   bundles on $G/B$},
   journal={Ann. of Math. (2)},
   volume={112},
   date={1980},
   number={1},
   pages={113--121},
}

\bib{And-str-link}{article}{
   author={Andersen, Henning Haahr},
   title={The strong linkage principle},
   journal={J. Reine Angew. Math.},
   volume={315},
   date={1980},
   pages={53--59},
}

\bib{AFPRW}{article}{
    author={Aprodu, Marian},
    author={Farkas, Gavril},
    author={Papadima, \c{S}tefan},
    author={Raicu, Claudiu},
    author={Weyman, Jerzy},
    title={Koszul modules and Green's conjecture},
    journal={Invent. Math.},
    volume={218},
    date={2019},
    number={3},
    pages={657--720},
}

 \bib{AFPRW-groups}{article}{
    author={Aprodu, Marian},
    author={Farkas, Gavril},
    author={Papadima, \c{S}tefan},
    author={Raicu, Claudiu},
    author={Weyman, Jerzy},
    title={Topological invariants of groups and Koszul modules},
    journal={Duke Math. J.},
    volume={171},
    date={2022},
    number={10},
    pages={2013--2046},
    issn={0012-7094},
}

\bib{AFPRW-resonance}{article}{
   author={Aprodu, Marian},
   author={Farkas, Gavril},
   author={Raicu, Claudiu},
   author={Weyman, Jerzy},
   title={Koszul modules with vanishing resonance in algebraic geometry},
   journal={Selecta Math. (N.S.)},
   volume={30},
   date={2024},
   number={2},
   pages={Paper No. 24, 33},
}

\bib{AFRS-reduced}{article}{
   author={Aprodu, Marian},
   author={Farkas, Gavril},
   author={Raicu, Claudiu},
   author={Suciu, Alexander I.},
   title={Reduced resonance schemes and Chen ranks},
   journal={J. Reine Angew. Math.},
   volume={814},
   date={2024},
   pages={205--240},
}

\bib{backelinThesis}{article}{
  author={Backelin, J\"orgen},
  title={A Distributiveness property of augmented algebras and some related homological results},
  date={1981},
  journal={Ph.D. Thesis, Stockholm}
}

\bib{BCRV}{book}{
    author={Bruns, Winfried},
    author={Conca, Aldo},
    author={Raicu, Claudiu},
    author={Varbaro, Matteo},
    title={Determinants, Gr\"obner bases and cohomology},
    series={Springer Monographs in Mathematics},
    publisher={Springer, Cham},
    date={2022},
}

\bib{beilinson}{article}{
   author={Beilinson, A. A.},
   title={Coherent sheaves on ${\bf P}^{n}$ and problems in linear
   algebra},
   language={Russian},
   journal={Funktsional. Anal. i Prilozhen.},
   volume={12},
   date={1978},
   number={3},
   pages={68--69},
}

\bib{BGG}{article}{
   author={Bernstein, I. N.},
   author={Gel'fand, I. M.},
   author={Gel'fand, S. I.},
   title={Differential operators on the base affine space and a study of
   ${\mf{g}}$-modules},
   conference={
      title={Lie groups and their representations},
      address={Proc. Summer School, Bolyai J\'{a}nos Math. Soc., Budapest},
      date={1971},
   },
   book={
      publisher={Halsted, New York},
   },
   date={1975},
   pages={21--64},
}

\bib{BTW}{article}{
    author={Billera, Louis J.},
    author={Thomas, Hugh},
    author={van Willigenburg, Stephanie},
    title={Decomposable compositions, symmetric quasisymmetric functions and
    equality of ribbon Schur functions},
    journal={Adv. Math.},
    volume={204},
    date={2006},
    number={1},
    pages={204--240},
}

\bib{BR1}{article}{
   author={Buchsbaum, D. A.},
   author={Rota, Gian-Carlo},
   title={Projective resolutions of Weyl modules},
   journal={Proc. Nat. Acad. Sci. U.S.A.},
   volume={90},
   date={1993},
   number={6},
   pages={2448--2450},
}

\bib{BR2}{article}{
   author={Buchsbaum, D. A.},
   author={Rota, Gian-Carlo},
   title={A new construction in homological algebra},
   journal={Proc. Nat. Acad. Sci. U.S.A.},
   volume={91},
   date={1994},
   number={10},
   pages={4115--4119},
}

\bib{BR3}{article}{
   author={Buchsbaum, David A.},
   author={Rota, Gian-Carlo},
   title={Approaches to resolution of Weyl modules},
   journal={Adv. in Appl. Math.},
   volume={27},
   date={2001},
   number={1},
   pages={82--191},
}

\bib{car-lusz}{article}{
   author={Carter, Roger W.},
   author={Lusztig, George},
   title={On the modular representations of the general linear and symmetric
   groups},
   journal={Math. Z.},
   volume={136},
   date={1974},
   pages={193--242},
}

\bib{chalup-Adv}{article}{
   author={Cha\l upnik, Marcin},
   title={Koszul duality and extensions of exponential functors},
   journal={Adv. Math.},
   volume={218},
   date={2008},
   number={3},
   pages={969--982},
}

\bib{debruijn}{article}{
   author={de Bruijn, N. G.},
   title={On Mahler's partition problem},
   journal={Nederl. Akad. Wetensch., Proc.},
   volume={51},
   date={1948},
   pages={659--669 = Indagationes Math. 10, 210--220 (1948)},
}

\bib{DPS-jump-loci}{article}{
    author={Dimca, Alexandru},
    author={Papadima, \c Stefan},
    author={Suciu, Alexander I.},
    title={Topology and geometry of cohomology jump loci},
    journal={Duke Math. J.},
    volume={148},
    date={2009},
    number={3},
    pages={405--457},
}

\bib{doty1989strong}{article}{
   author={Doty, Stephen},
   title={The strong linkage principle},
   journal={Amer. J. Math.},
   volume={111},
   date={1989},
   number={1},
   pages={135--141},
}

\bib{doty1989submodules}{article}{
   author={Doty, Stephen},
   title={Submodules of symmetric powers of the natural module for ${\rm
   GL}_n$},
   conference={
      title={Invariant theory},
      address={Denton, TX},
      date={1986},
   },
   book={
      series={Contemp. Math.},
      volume={88},
      publisher={Amer. Math. Soc., Providence, RI},
   },
   date={1989},
   pages={185--191},
}

\bib{doty-gap}{article}{
    author = {Doty, Stephen},
    title = {\href{https://doty.math.luc.edu}{Weyl modules: GAP Package}},
   date={2009},
}

\bib{doty-walker}{article}{
    author={Doty, Stephen},
    author={Walker, Grant},
    title={Truncated symmetric powers and modular representations of ${\rm
    GL}_n$},
    journal={Math. Proc. Cambridge Philos. Soc.},
    volume={119},
    date={1996},
    number={2},
    pages={231--242},
 }
 
\bib{draisma}{article}{
   author={Draisma, Jan},
   title={Topological Noetherianity of polynomial functors},
   journal={J. Amer. Math. Soc.},
   volume={32},
   date={2019},
   number={3},
   pages={691--707},
}

\bib{fri-sus}{article}{
    author={Friedlander, Eric M.},
    author={Suslin, Andrei},
    title={Cohomology of finite group schemes over a field},
    journal={Invent. Math.},
    volume={127},
    date={1997},
    number={2},
    pages={209--270},
}

\bib{froberg}{article}{
   author={Fr\"{o}berg, Carl-Erik},
   title={Accurate estimation of the number of binary partitions},
   journal={Nordisk Tidskr. Informationsbehandling (BIT)},
   volume={17},
   date={1977},
   number={4},
   pages={386--391},
}

\bib{GS}{article}{
    author = {Grayson, Daniel R.},
    author = {Stillman, Michael E.},
    title = {Macaulay2, a software system for research in algebraic geometry}, 
    journal = {Available at \url{http://www.math.uiuc.edu/Macaulay2/}}
}

 \bib{green}{book}{
    author={Green, J. A.},
    title={Polynomial representations of ${\rm GL}_{n}$},
    series={Lecture Notes in Mathematics},
    volume={830},
    edition={Second corrected and augmented edition},
    note={With an appendix on Schensted correspondence and Littelmann paths
    by K. Erdmann, Green and M. Schocker},
    publisher={Springer, Berlin},
    date={2007},
    pages={x+161},
 }

\bib{haboush}{article}{
   author={Haboush, W. J.},
   title={A short proof of the Kempf vanishing theorem},
   journal={Invent. Math.},
   volume={56},
   date={1980},
   number={2},
   pages={109--112},
}

\bib{hartshorne}{book}{
    author={Hartshorne, Robin},
    title={Algebraic geometry},
    series={Graduate Texts in Mathematics, No. 52},
    publisher={Springer-Verlag, New York-Heidelberg},
    date={1977},
}

\bib{hum-MOFW}{book}{
	author={Humphreys, Jim},
	title = {\href{https://mathoverflow.net/questions/45496/non-vanishing-cohomology-of-line-bundles-on-projective-varieties-in-prime-charac}{Non-vanishing cohomology of line bundles on projective varieties in prime characteristic?}},
	date={2010},
}

\bib{james1978some}{article}{
   author={James, G. D.},
   title={Some combinatorial results involving Young diagrams},
   journal={Math. Proc. Cambridge Philos. Soc.},
   volume={83},
   date={1978},
   number={1},
   pages={1--10},
}

\bib{jantzen}{book}{
    author={Jantzen, Jens Carsten},
    title={Representations of algebraic groups},
    series={Mathematical Surveys and Monographs},
    volume={107},
    edition={2},
    publisher={American Mathematical Society, Providence, RI},
    date={2003},
}

\bib{kempf-van}{article}{
   author={Kempf, George R.},
   title={Vanishing theorems for flag manifolds},
   journal={Amer. J. Math.},
   volume={98},
   date={1976},
   number={2},
   pages={325--331},
}

\bib{kempf}{article}{
    author={Kempf, George R.},
    title={Linear systems on homogeneous spaces},
    journal={Ann. of Math. (2)},
    volume={103},
    date={1976},
    number={3},
    pages={557--591},
}

\bib{kou}{article}{
    author={Kouwenhoven, Frank M.},
    title={Schur and Weyl functors},
    journal={Adv. Math.},
    volume={90},
    date={1991},
    number={1},
    pages={77--113},
}

\bib{Krause-duality}{article}{
   author={Krause, Henning},
   title={Koszul, Ringel and Serre duality for strict polynomial functors},
   journal={Compos. Math.},
   volume={149},
   date={2013},
   number={6},
   pages={996--1018},
}

\bib{Krause-book}{book}{
   author={Krause, Henning},
   title={Homological theory of representations},
   series={Cambridge Studies in Advanced Mathematics},
   volume={195},
   publisher={Cambridge University Press, Cambridge},
   date={2022},
   pages={xxxiv+482},
}

\bib{lascoux}{article}{
   author={Lascoux, Alain},
   title={Syzygies des vari\'{e}t\'{e}s d\'{e}terminantales},
   language={French},
   journal={Adv. in Math.},
   volume={30},
   date={1978},
   number={3},
   pages={202--237},
}

\bib{las-pra}{article}{
    author={Lascoux, Alain},
    author={Pragacz, Piotr},
    title={Ribbon Schur functions},
    journal={European J. Combin.},
    volume={9},
    date={1988},
    number={6},
    pages={561--574},
}

\bib{lucas}{book}{
   author={Lucas, Edouard},
   title={Th\'{e}orie des nombres. Tome I: Le calcul des nombres entiers, le
   calcul des nombres rationnels, la divisibilit\'{e} arithm\'{e}tique},
   language={French},
   note={Nouveau tirage augment\'{e} d'un avant-propos de Georges Bouligand},
   publisher={Librairie Scientifique et Technique Albert Blanchard, Paris},
   date={1961},
}

\bib{mahler}{article}{
   author={Mahler, Kurt},
   title={On a special functional equation},
   journal={J. London Math. Soc.},
   volume={15},
   date={1940},
   pages={115--123},
}

\bib{polishchuk2005quadratic}{book}{
   author={Polishchuk, Alexander},
   author={Positselski, Leonid},
   title={Quadratic algebras},
   series={University Lecture Series},
   volume={37},
   publisher={American Mathematical Society, Providence, RI},
   date={2005},
}

\bib{PS-chen}{article}{
    author={Papadima, \c Stefan},
    author={Suciu, Alexander I.},
    title={Chen Lie algebras},
    journal={Int. Math. Res. Not.},
    date={2004},
    number={21},
    pages={1057--1086},
 }

 \bib{PS-Artin}{article}{
    author={Papadima, \c Stefan},
    author={Suciu, Alexander I.},
    title={Algebraic invariants for right-angled Artin groups},
    journal={Math. Ann.},
    volume={334},
    date={2006},
    number={3},
    pages={533--555},
 }

 \bib{PS-johnson}{article}{
    author={Papadima, \c Stefan},
    author={Suciu, Alexander I.},
    title={Homological finiteness in the Johnson filtration of the
    automorphism group of a free group},
    journal={J. Topol.},
    volume={5},
    date={2012},
    number={4},
    pages={909--944},
 }

 \bib{PS-resonance}{article}{
    author={Papadima, \c Stefan},
    author={Suciu, Alexander I.},
    title={Vanishing resonance and representations of Lie algebras},
    journal={J. Reine Angew. Math.},
    volume={706},
    date={2015},
    pages={83--101},
 }

\bib{rai-Kmod}{article}{
    author={Raicu, Claudiu},
    title={Koszul modules},
    conference={
       title={Recent developments in commutative algebra},
    },
    book={
       series={Lecture Notes in Math.},
       volume={2283},
       publisher={Springer, Cham},
    },
    date={2021},
    pages={33--57},
 }

 \bib{rai-sam}{article}{
    author={Raicu, Claudiu},
    author={Sam, Steven V.},
    title={Bi-graded Koszul modules, K3 carpets, and Green's conjecture},
    journal={Compos. Math.},
    volume={158},
    date={2022},
    number={1},
    pages={33--56},
 }

\bib{RV}{article}{
    author={Raicu, Claudiu},
    author={VandeBogert, Keller},
    title={Stable sheaf cohomology and Koszul--Ringel duality},
    journal = {arXiv},
    number = {2509.08923},
    date={2025}
}

\bib{schreyer}{article}{
   author={Schreyer, Frank-Olaf},
   title={Syzygies of canonical curves and special linear series},
   journal={Math. Ann.},
   volume={275},
   date={1986},
   number={1},
   pages={105--137},
}

\bib{sch-suc-BGG}{article}{
    author={Schenck, Henry K.},
    author={Suciu, Alexander I.},
    title={Resonance, linear syzygies, Chen groups, and the
    Bernstein-Gel'fand-Gel'fand correspondence},
    journal={Trans. Amer. Math. Soc.},
    volume={358},
    date={2006},
    number={5},
    pages={2269--2289},
}

 \bib{stacks-project}{article}{
   author       = {The {Stacks project authors}},
   title        = {The Stacks project},
   year         = {2023},
}

\bib{san-yud}{article}{
   author={Santana, Ana Paula},
   author={Yudin, Ivan},
   title={Characteristic-free resolutions of Weyl and Specht modules},
   journal={Adv. Math.},
   volume={229},
   date={2012},
   number={4},
   pages={2578--2601},
}

\bib{totaro}{article}{
   author={Totaro, Burt},
   title={Projective resolutions of representations of ${\rm GL}(n)$},
   journal={J. Reine Angew. Math.},
   volume={482},
   date={1997},
   pages={1--13},
}

\bib{touze}{article}{
   author={Touz\'e, Antoine},
   title={Ringel duality and derivatives of non-additive functors},
   journal={J. Pure Appl. Algebra},
   volume={217},
   date={2013},
   number={9},
   pages={1642--1673},
}

\bib{touze-bar-complexes}{article}{
   author={Touz\'e, Antoine},
   title={Bar complexes and extensions of classical exponential functors},
   language={English, with English and French summaries},
   journal={Ann. Inst. Fourier (Grenoble)},
   volume={64},
   date={2014},
   number={6},
   pages={2563--2637},
}

\bib{verma}{article}{
   author={Verma, Daya-Nand},
   title={The r\^{o}le of affine Weyl groups in the representation theory of algebraic Chevalley groups and their Lie algebras.},
   conference={
      title={Lie groups and their representations},
      address={Proc. Summer School, Bolyai J\'{a}nos Math. Soc., Budapest},
      date={1971},
   },
   date={1975},
   pages={653--705},
}

 \bib{V02}{article}{
    author={Voisin, Claire},
    title={Green's generic syzygy conjecture for curves of even genus lying
    on a $K3$ surface},
    journal={J. Eur. Math. Soc. (JEMS)},
    volume={4},
    date={2002},
    number={4},
    pages={363--404},
 }

 \bib{V05}{article}{
    author={Voisin, Claire},
    title={Green's canonical syzygy conjecture for generic curves of odd
    genus},
    journal={Compos. Math.},
    volume={141},
    date={2005},
    number={5},
    pages={1163--1190},
 }

\bib{weyman2003cohomology}{book}{
   author={Weyman, Jerzy},
   title={Cohomology of vector bundles and syzygies},
   series={Cambridge Tracts in Mathematics},
   volume={149},
   publisher={Cambridge University Press, Cambridge},
   date={2003},
}

\bib{wong}{article}{
   author={Wong, W. J.},
   title={Very strong linkage for cohomology groups of line bundles on
   $G/B$},
   journal={J. Algebra},
   volume={113},
   date={1988},
   number={1},
   pages={71--80},
}

\bib{zel-res}{article}{
    author={Zelevinski\u{\i}, A. V.},
    title={Resolutions, dual pairs and character formulas},
    language={Russian},
    journal={Funktsional. Anal. i Prilozhen.},
    volume={21},
    date={1987},
    number={2},
    pages={74--75},
}

     \end{biblist}
 \end{bibdiv}

\end{document}